\documentclass{amsart}

\headheight=8pt
\topmargin=0pt
\textheight=624pt
\textwidth=432pt
\oddsidemargin=18pt
\evensidemargin=18pt

\usepackage{amsmath}
\usepackage{amsfonts}
\usepackage{amssymb}
\usepackage{amsthm}
\usepackage{comment}
\usepackage{epsfig}
\usepackage{psfrag}
\usepackage{mathrsfs}
\usepackage{amscd}
\usepackage[all]{xy}
\usepackage{rotating}
\usepackage{lscape}
\usepackage{amsbsy}
\usepackage{verbatim}
\usepackage{moreverb}
\usepackage{color}
\usepackage{bbm}
\usepackage{eucal}
\usepackage{stmaryrd}

\usepackage{tikz-cd}
\usetikzlibrary{patterns,shapes.geometric,arrows,decorations.markings}
\usepackage{tikz-3dplot}

\usepackage{caption}
\usepackage{subcaption}

\colorlet{lightgray}{black!15}

\tikzset{->-/.style={decoration={
  markings,
  mark=at position .5 with {\arrow{>}}},postaction={decorate}}}
\tikzset{midarrow/.style={decoration={
    markings,
    mark=at position {#1} with {\arrow{>}}},postaction={decorate}}}

\pagestyle{plain}

\newtheorem{theorem}{Theorem}[section]
\newtheorem{prop}[theorem]{Proposition}
\newtheorem{lemma}[theorem]{Lemma}
\newtheorem{cor}[theorem]{Corollary}

\theoremstyle{definition}
\newtheorem{definition}[theorem]{Definition}

\newtheorem{observation}[theorem]{Observation}

\newtheorem{remark}[theorem]{Remark}

\newtheorem{notation}[theorem]{Notation}
\newtheorem{l.notation}[theorem]{Local Notation}

\newtheorem{convention}[theorem]{Convention}

\theoremstyle{remark}

\definecolor{orange}{rgb}{.95,0.5,0}
\definecolor{light-gray}{gray}{0.75}
\definecolor{brown}{cmyk}{0, 0.8, 1, 0.6}
\definecolor{plum}{rgb}{.5,0,1}

\DeclareMathOperator{\ev}{\mathsf{ev}}

\DeclareMathOperator{\bBar}{\sf Bar}
\DeclareMathOperator{\Alg}{\sf Alg}

\DeclareMathOperator{\Mod}{\sf Mod}

\DeclareMathOperator{\CAlg}{\sf CAlg}

\DeclareMathOperator{\Aut}{\sf Aut}
\DeclareMathOperator{\colim}{{\sf colim}}

\DeclareMathOperator{\limit}{{\sf lim}}

\DeclareMathOperator{\Hom}{\sf Hom}
\DeclareMathOperator{\End}{\sf End}

\DeclareMathOperator{\Fun}{{\sf Fun}}

\DeclareMathOperator{\Map}{{\sf Map}}

\DeclareMathOperator{\exit}{\sf Exit}
\DeclareMathOperator{\Exit}{\bcE{\sf xit}}

\DeclareMathOperator{\Cat}{{\sf Cat}}
\DeclareMathOperator{\fCat}{{\sf fCat}}

\DeclareMathOperator{\Quiv}{\sf Quiv}

\DeclareMathOperator{\Ar}{{\sf Ar}}


\DeclareMathOperator{\Diff}{{\sf Diff}}

\DeclareMathOperator{\op}{\mathsf{op}}

\DeclareMathOperator{\cBun}{{\sf c}\cB\mathsf{un}}

\DeclareMathOperator{\sk}{\mathsf{sk}}

\DeclareMathOperator{\Mfd}{{\cM}\mathsf{fd}}
\DeclareMathOperator{\cMfd}{{\sf c}{\cM}\mathsf{fd}}
\DeclareMathOperator{\Mfld}{{\cM}\mathsf{fld}}

\DeclareMathOperator{\conf}{\mathsf{Conf}}

\DeclareMathOperator{\Spaces}{\cS\mathsf{paces}}

\DeclareMathOperator{\Disk}{\cD{\mathsf{isk}}}

\DeclareMathOperator{\Adj}{\mathsf{Adj}}

\DeclareMathOperator{\fr}{\sf fr}
\DeclareMathOperator{\sfr}{\sf sfr}

\DeclareMathOperator{\Bord}{{\sf Bord}_1^{\fr}}

\def\ot{\otimes}

\DeclareMathOperator{\oo}{\infty}

\DeclareMathOperator{\tr}{\triangleright}
\DeclareMathOperator{\tl}{\triangleleft}

\newcommand{\lag}{\langle}
\newcommand{\rag}{\rangle}

\newcommand{\un}{\underline}

\newcommand{\ra}{\rightarrow}

\newcommand{\xra}{\xrightarrow}
\newcommand{\xla}{\xleftarrow}

\def\cA{\mathcal A}\def\cB{\mathcal B}\def\cC{\mathcal C}\def\cD{\mathcal D}
\def\cE{\mathcal E}
\def\cK{\mathcal K}
\def\cM{\mathcal M}
\def\cS{\mathcal S}
\def\cV{\mathcal V}\def\cW{\mathcal W}\def\cX{\mathcal X}
\def\cY{\mathcal Y}

\def\DD{\mathbb D}

\def\RR{\mathbb R}\def\SS{\mathbb S}\def\TT{\mathbb T}

\def\ZZ{\mathbb Z}

\def\sB{\mathsf B}\def\sC{\mathsf C}
\def\sH{\mathsf H}
\def\sL{\mathsf L}
\def\sO{\mathsf O}
\def\sR{\mathsf R}\def\sS{\mathsf S}

\def\sZ{\mathsf Z}

\def\bdelta{\mathbf\Delta}
\def\bDelta{\mathbf\Delta}

\def\fB{\frak B}\def\fC{\frak C}

\def\bcD{\boldsymbol{\mathcal D}}\def\bcE{\boldsymbol{\mathcal E}}

\def\bcM{\boldsymbol{\mathcal M}}

\def\bcM{\boldsymbol{\mathcal M}}

\DeclareMathOperator{\Obj}{\mathsf{Obj}}

\DeclareMathOperator{\PShv}{\mathsf{PShv}}
\DeclareMathOperator{\uno}{\mathbbm{1}}

\DeclareMathOperator{\id}{\sf id}

\DeclareMathOperator{\Mor}{\sf Mor}
\DeclareMathOperator{\Dual}{\sf Dual}
\def\bfX{\boldsymbol{\mathfrak X}}

\DeclareMathOperator{\Sets}{\sf Sets}

\DeclareMathOperator{\Perf}{\sf Perf}
\DeclareMathOperator{\trace}{\sf trace}
\DeclareMathOperator{\unit}{\sf unit}

\DeclareMathOperator{\lax}{\sf lax}

\DeclareMathOperator{\para}{\bDelta_{\circlearrowleft}}
\newcommand{\bit}[1]{\textbf{\textit{#1}}}

\DeclareMathOperator{\racts}{\curvearrowright}
\DeclareMathOperator{\lacts}{\curvearrowleft}

\begin{document}

\title{Traces for factorization homology in dimension~1}

\author{David Ayala \& John Francis}

\address{Department of Mathematics\\Montana State University\\Bozeman, MT 59717}
\email{david.ayala@montana.edu}
\address{Department of Mathematics\\Northwestern University\\Evanston, IL 60208}
\email{jnkf@northwestern.edu}
\thanks{DA was supported by the National Science Foundation under awards 1812055 and 1945639. JF was supported by the National Science Foundation under award 1812057. This material is based upon work supported by the National Science Foundation under Grant No. DMS-1440140, while the authors were in residence at the Mathematical Sciences Research Institute in Berkeley, California, during the Spring 2020 semester.}

\begin{abstract}
We construct a circle-invariant trace from the factorization homology over the circle
\[
{\sf trace}
\colon
\int^\alpha_{\SS^1} \un\End(V) \longrightarrow \uno
\]
associated to a dualizable object $V\in \bfX$ in a symmetric monoidal $(\oo,1)$-category.
This proves a conjecture of To\"en--Vezzosi~\cite{toen.vezzosi} on existence of circle-invariant traces. 
Underlying our construction is a calculation of the factorization homology over the circle of the walking adjunction in terms of the paracyclic category of Getzler--Jones~\cite{getzler.jones}:
\[
\int_{\SS^1} \Adj 
~\simeq~
\para^{\tl\!\tr}
~.
\]
This calculation exhibits a form of Poincar\'e duality for 1-dimensional factorization homology.  
\end{abstract}

\keywords{Factorization homology. Traces. Hochschild homology. Negative cyclic homology. Adjunctions. The paracyclic category.}

\subjclass[2010]{Primary 13D03. Secondary 57K99, 18N10, 18N60.}

\maketitle

\tableofcontents

\section*{Introduction}

In this work, we generalize foundational algebraic results concerning traces and Hochschild homology established for chain complexes or spectra to symmetric monoidal $(\oo,1)$-categories. 
Our tool is the theory of factorization homology for $(\infty,1)$-categories, through which we apply the geometry of 1-dimensional manifolds to obtain these results in algebra.  
This theory of factorization homology for $(\infty,1)$-categories is developed in each of the logically independent works~\cite{fact1} and~\cite{circle}, and is related to factorization homology of associative algebras as developed in the work~\cite{oldfact}.
(See the following section, on implementation of factorization homology, for a discussion of these versions and how we use each in this paper.)

\smallskip

\subsubsection*{\bf Classical situation}
We briefly recall some classical results.
Let $\Bbbk$ be a commutative ring.\footnote{
In this discussion, everything is derived: we work in the symmetric monoidal $(\oo,1)$-category 
$(\Mod_{\Bbbk}, \underset{\Bbbk}\ot )$, which can be presented as an $\oo$-categorical localization of a category of chain complexes over $\Bbbk$ and derived tensor product over $\Bbbk$, localized on quasi-isomorphisms.
So, for instance, by the \emph{homology} $\sH(X;\Bbbk)$ of a space $X$, we mean the object in $\Mod_{\Bbbk}$ presented by $\Bbbk$-valued chains on $X$; by \emph{Hochschild homology} ${\sf HH}$, we mean Hochschild chains.
}
For each perfect $\Bbbk$-module $V$, \bit{trace} of $\Bbbk$-linear endomorphisms of $V$ defines a $\Bbbk$-linear map
\[
\trace
\colon
\un{\End}_\Bbbk(V)
\xla{~\simeq~}
V \underset{\Bbbk}\ot V^\vee
\xra{~\ev~}
\Bbbk
~,
\]
where $V^\vee$ is the $\Bbbk$-linear dual of $V$.
This trace map respects composition of $\Bbbk$-linear endomorphisms of $V$ in that there is a canonical identification $\trace(\varphi \circ \psi) \simeq \trace(\psi \circ \varphi)$ in $\Bbbk$.
More generally, $\trace$ is \bit{cyclically invariant}: there is a canonical identification in $\Bbbk$:
\[
\trace(\varphi_1\circ \cdots \circ \varphi_r) 
~\simeq~
\trace(\varphi_2\circ \cdots \circ \varphi_r\circ  \varphi_1)
~.
\]
The defining property of \bit{Hochschild homology} is just so that this cyclic invariance of $\trace$ organizes as the first of the following factorizations among $\Bbbk$-modules:
\begin{equation}\label{e21}
\xymatrix{
\un{\End}_\Bbbk(V)
\ar@(d,-)[drrrr]_-{\trace}
\ar[rr]
&&
{\sf HH}\bigl( \un{\End}_\Bbbk(V) \bigr)
\ar[rr]^-{{\sf HH}({\rm inclusion})}
\ar@{-->}[drr]^-{\trace}
&&
{\sf HH}\bigl( \Perf_\Bbbk \bigr)
\ar@{-->}[d]^-{\trace}
&&
&
{\sf HH}(\Bbbk)
\ar[lll]_-{\simeq}^-{\rm (Morita~invariance)}
\\
&&
&&
\Bbbk
&&
&
.
}
\end{equation}
Morita invariance of Hochschild homology grants the further 
factorization through the Hochschild homology of the (derived) $\Bbbk$-linear category of perfect $\Bbbk$-modules, as well as its identification with the Hochschild homology of $\Bbbk$.  
An advantage of this extension of $\trace$ to ${\sf HH}(\Perf_\Bbbk)$ is that there is a canonical $\Bbbk$-linear map from the $\Bbbk$-homology of the moduli space of perfect $\Bbbk$-modules:
\begin{equation}\label{e24}
{\sf H}\bigl( \Obj( \Perf_\Bbbk) ; \Bbbk \bigr)
\longrightarrow
{\sf HH}(\Perf_\Bbbk)
\xla{~\simeq~}
{\sf HH}(\Bbbk)
~.
\end{equation}
This map offers a composite $\Bbbk$-linear map:
\begin{equation}\label{e27}
{\sf H}\bigl( \Obj( \Perf_\Bbbk) ; \Bbbk \bigr)
\underset{(\ref{e24})}\longrightarrow
{\sf HH}(\Perf_\Bbbk)
\underset{(\ref{e21})}{\xra{~\trace~}}
\Bbbk
~
\simeq
~
\un{\End}_{\Bbbk}(\Bbbk)
~,
\end{equation}
which, by construction, evaluates as
\[
V
\longmapsto 
\trace( \id_V )
~\simeq~
\bigl( \Bbbk \xra{ \unit} \un{\End}_\Bbbk(V) \xla{\simeq} V \underset{\Bbbk}\ot V^\vee \xra{\ev} \Bbbk \bigr)
~,
\]
the trace of the identity, also known as the \bit{dimension} of $V$.

\smallskip

\subsubsection*{\bf Classical $\TT$-invariance}
In these classical terms, we now recall invariance of $\trace$, in~(\ref{e21}), with respect to the circle group $\TT$.
Namely, there is a canonical \bit{$\TT$-action} on Hochschild homology.
As such, the $\Bbbk$-linear map~(\ref{e24}) canonically factors through the (derived) \bit{$\TT$-invariants}:
\[
\xymatrix{
&&
{\sf HH}(\Perf_\Bbbk)^{\TT}
\ar[d]
&&
{\sf HC}^-(\Bbbk) 
\ar[ll]_-{\simeq}
\ar[d]
\\
{\sf H}\bigl( \Obj( \Perf_\Bbbk) ; \Bbbk \bigr)
\ar[rr]^-{(\ref{e24})}
\ar@{-->}[urr]
&&
{\sf HH}(\Perf_\Bbbk)
&&
{\sf HH}(\Bbbk)
\ar[ll]_-{\simeq}
~,
}
\]
where the (derived) $\TT$-invariants of ${\sf HH}(\Bbbk)$ is the \bit{negative cyclic homology} of $\Bbbk$.
Meanwhile, the construction of the $\Bbbk$-linear trace map~(\ref{e21}) is canonically $\TT$-invariant with respect to the trivial $\TT$-action on $\Bbbk\simeq \un{\End}_{\Bbbk}(\Bbbk)$.
In particular, there is a $\Bbbk$-linear map between (derived) $\TT$-invariants:
\begin{equation}\label{e26}
{\sf HC}^-(\Bbbk)
~\simeq~
{\sf HH}(\Perf_\Bbbk)^{\TT}
\xra{~\trace^{\TT}~}
\Bbbk^{\TT}
~\simeq~
\Bbbk^{\pitchfork \sB\TT}
\simeq
\Bbbk[c]
~,
\end{equation}
where the codomain is the cohomology of $\sB\TT$ with coefficients in $\Bbbk$, which is a polynomial ring over $\Bbbk$ on a generator $c$ in degree $-2$.
There results a lift of~(\ref{e27}):
{\small
\begin{equation}\label{e28}
\xymatrix{
&
{\sf HH}(\Perf_\Bbbk)^{\TT}
\ar[r]^-{\trace^{\TT}}
\ar[d]
&
\un{\End}_{\Perf_\Bbbk}(\Bbbk)^{\pitchfork \sB \TT}
\ar[d]
\\
{\sf H}\bigl( \Obj( \Perf_\Bbbk) ; \Bbbk \bigr)
\ar[r]_-{(\ref{e24})}
\ar@{-->}[ur]
&
{\sf HH}(\Perf_\Bbbk)
\ar[r]_-{(\ref{e21})}^-{\trace}
&
\un{\End}_{\Perf_\Bbbk}(\Bbbk)
~,
&
V
\mapsto 
\Bigl(
\Bbbk
\xra{\unit}
V^\vee \underset{\Bbbk}\ot V
\simeq
V \underset{\Bbbk}\ot V^{\vee}
\xra{\ev}
\Bbbk
\Bigr)
~.
}
\end{equation}
}

\smallskip

\subsubsection*{\bf Generalization.}
As {\bf Theorem~\ref{r9}} of this article, we prove the following generalization. 
Let $\bfX$ be a rigid symmetric monoidal $(\oo,1)$-category, i.e., a symmetric monoidal $(\oo,1)$-category in which every object is dualizable. 
We construct a $\TT$-invariant trace map to its endomorphisms of its symmetric monoidal unit $\uno \in \bfX$ from its \bit{factorization homology} over the circle (i.e., its Hochschild homology):
\begin{equation}\label{e29}
{\sf HH}(\bfX)
~:=~
\displaystyle \int_{\SS^1} \bfX
\xra{~\trace~}
\End_{\bfX}(\uno)
~.
\end{equation}
This $\TT$-invariant morphism~(\ref{e29}) results a diagram among spaces:
\begin{equation}\label{e32}
\xymatrix{
&
\Bigl(
\displaystyle \int_{\SS^1} \bfX
\Bigr)^{\TT}
\ar[r]^-{\trace^{\TT}}
\ar[d]
&
\End_{\bfX}(\uno)^{\pitchfork \sB\TT}
\ar[d]
\\
\Obj(\bfX)
\ar[r]
\ar[ur]
&
\displaystyle \int_{\SS^1} \bfX
\ar[r]_-{(\ref{e29})}^-{\trace}
&
\End_{\bfX}(\uno) 
~,
&
V
\mapsto 
\Bigl(
\uno
\xra{\eta}
V^\vee \ot V
\simeq
V \ot V^{\vee}
\xra{\epsilon}
\uno
\Bigr)
~,
}
\end{equation}
for which, we show, the bottom horizontal composite evaluates as indicated, where $V^\vee$ is the dual of $V$, and $\eta$ and $\epsilon$ are the respective unit and the counit for this duality.

\smallskip

\subsubsection*{\bf Conjecture of To\"en--Vezzosi.}
As a consequence of the construction of~(\ref{e32}), we prove Conjectures~4.1 and~5.1 of To\"en--Vezzosi~\cite{toen.vezzosi}, which assert that trace maps have canonical $\TT$-invariant refinements for dualizable objects in general symmetric monoidal $(\oo,1)$-categories: 
see {\bf Corollary~\ref{cor.toenvezzosi}}. 
Specifically, we extend the bottom horizontal trace map~(\ref{e32}) along $\Obj(\bfX) \xra{\rm constant} \Obj(\bfX)^{\pitchfork \SS^1} =: L\Obj(\bfX)$:
\begin{equation}\label{e33}
\trace
\colon
L\bfX
 \longrightarrow 
\End_{\bfX}(\uno)
\end{equation}
To\"en and Vezzosi later showed in~\cite{toen.vezzosi2} that their Conjectures~4.1 and~5.1 of~\cite{toen.vezzosi} are consequences of the 1-dimensional cobordism hypothesis; see also \cite{steinebrunner}, which also makes use of the 1-dimensional cobordism hypothesis.
The logic of this work is, however, independent of the cobordism hypothesis: 
we construct the trace maps~(\ref{e32}) and~(\ref{e33}) using factorization homology.
In~\cite{tang1}, we prove the 1-dimensional cobordism hypothesis using methods that are logically independent of this work, though firmly inspired by its approach.

\smallskip

\subsubsection*{\bf Cobordism hypothesis.}
Recall, after Baez--Dolan~\cite{baezdolan} and Hopkins--Lurie~\cite{cobordism}, that the 1-dimensional cobordism hypothesis asserts that a dualizable object $V\in \bfX$ determines a 1-dimensional TQFT, $\sZ_V$.
Identifying the values of $\sZ_V$ on cobordisms that are disjoint unions of disks is a simple matter using dualizability data of $V$ (e.g., its dual $V^\vee$, the unit $\uno \xra{\eta} V^\vee\ot V$, and the counit $V\ot V^\vee \xra{\epsilon}\uno$).  
Furthermore, identifying the value $\sZ_V(\SS^1)\in \End_{\bfX}(\uno)$ can be achieved by witnessing $\SS^1$ as a composition (in $\Bord$) of such disk-cobordisms.  
For instance, witnessing $\SS^1$ as a union of two hemispherical 1-disks determines an identification
\[
(\uno\xra{\sZ_V(\SS^1)}\uno) 
~\simeq~
(\uno \xra{\eta}V^\vee \ot V \simeq V\ot V^\vee \xra{\epsilon} \uno)
~.
\]
In this way, one can easily identify all values of the sought TQFT $\sZ_V$, via \emph{combinatorial presentations} of 1-manifolds as gluings of disjoint unions of 1-disks along boundary 0-spheres.  
The {\bf key difficulty} in proving the 1-dimensional cobordism hypothesis is to verify coherent compatibilities among each value of $\sZ_V$ determined by a combinatorial presentation.
This can be summarized as, for instance, the problem of constructing the requisite action $\Diff^{\fr}(\SS^1) \racts \sZ_V(\SS^1)$.
While a choice of a combinatorial presentation of $\SS^1$ supplies a natural action of a finite cyclic group $\sC_r\racts \sZ_V(\SS^1)$, this action does not obviously extend as an action of the circle group $\TT \simeq \Diff^{\fr}(\SS^1)$.  
Such an action $\TT\racts \sZ_V(\SS^1)$ is a map between spaces,
\begin{equation}\label{f3}
\bigl\lag \TT \racts \sZ_V(\SS^1) \bigr\rag 
\colon
\sB \TT
\xra{~\lag \TT \racts \SS^1 \rag ~}
\End_{\Bord}(\emptyset) 
\xra{~\sZ_V~}
\End_{\bfX}(\uno)
~,
\end{equation}
which is a non-trivial homotopy-theoretic datum.  
For instance, as demonstrated in~(\ref{e28}) after~(\ref{e26}), for any commutative ring $\Bbbk$, it determines an element $c_{2d}\in  \sH_{2d}\bigl( \End_{\bfX}(\uno);\Bbbk \bigr)$ in each even-degree homology group.
Each combinatorial definition of trace, and hence of the value $\sZ_V(\SS^1):=\trace(\id_V) \simeq \epsilon \circ \eta$, only determines the degree-0 element.
{\bf Theorem~\ref{r9}} constructs this map~(\ref{f3}) by supplying diagram~(\ref{e32}), thereby resolving the key difficulty mentioned above.

\smallskip

\subsubsection*{\bf Constructing the $\TT$-invariant trace.}
We now explain how factorization homology is used to prove {\bf Theorem~\ref{r9}}, and supply diagram~(\ref{e32}).
We frame this discussion in terms of the aforementioned key difficulty: to coherently identify each value of $\sZ_V(\SS^1)$ obtained from a combinatorial presentation of $\SS^1$.

A combinatorial presentation $D \rm{~of~}\SS^1$ determines, and is determined by, a non-empty finite subset $\sk_0(D) \subset \SS^1$;
an {\it equivalence} between two combinatorial presentations $D$ and $E$ of $\SS^1$ is an isotopy between finite subsets $\sk_0(D) \sim \sk_0(E)$.
The collection of combinatorial presentations $D \rm{~of~}\SS^1$ thusly organizes as the $\TT$-equivariant moduli space $\Obj\bigl( \bcD(\SS^1) \bigr) := \underset{r > 0} \coprod \conf_r(\SS^1)_{\Sigma_r} \simeq \underset{r >0} \coprod \frac{\TT}{\sC_r}$.  
Consider the \emph{$\oo$-categorical suspension},
\[
\sS\Bigl(
\Obj\bigl( \bcD(\SS^1) \bigr)
\Bigr)
~:=~
\Obj\bigl( \bcD(\SS^1) \bigr)^{\tl}
\underset{\Obj\bigl( \bcD(\SS^1) \bigr)}
\coprod
\Obj\bigl( \bcD(\SS^1) \bigr)^{\tr}
~,
\]
which is the $(\oo,1)$-category resulting from adjoining two cone-points $\pm \infty$ to $\Obj\bigl( \bcD(\SS^1) \bigr)$ with, for each point $(D{\rm ~of~}\SS^1)\in \Obj\bigl( \bcD(\SS^1)$, unique morphisms $-\infty \xra{!} (D{\rm ~of~}\SS^1) \xra{!} +\infty$.
Assigning to each combinatorial presentation $D\rm{~of~}\SS^1$ its associated value of 
$\sZ_V(D{\rm ~of~}\SS^1)$ with its evident $\ZZ \simeq \Omega \frac{\TT}{\sC_{|\sk_0(D)|}}$-invariance defines a functor that carries the cone-points to the symmetric monoidal unit $\uno \in \bfX$:
\begin{equation}\label{e34}
\sZ_V( - {\rm ~of~} \SS^1) \colon
\sS\Bigl(\Obj\bigl( \bcD(\SS^1) \bigr)\Bigr)
\longrightarrow
\cX
~,
\end{equation}
{\small
\[
\Bigl(
-\infty \xra{!}
(D {\rm~of~} \SS^1)
\xra{!}
+\infty
\Bigr)
\longmapsto
\Bigl(
\uno
\xra{~\eta^{\ot \sk_0(D)}~}
(V^\vee \ot V)^{ \ot \sk_0(D) }
\simeq
(V \ot V^\vee)^{ \ot \pi_0( \SS^1 \smallsetminus \sk_0(D) ) }
\xra{~\epsilon^{\ot \pi_0( \SS^1 \smallsetminus \sk_0(D) )}~}
\uno
\Bigr)
~.
\]
}
Here, the central identification cyclically reassociates the iterated tensor product, which is thereafter naturally indexed by the \bit{Poincar\'e dual} of $(D\rm~of~ \SS^1)$ which is the result of exchanging the $0$- and $1$-dimensional strata of $D$.
This functor~(\ref{e34}) is evidently $\TT$-invariant, thusly extending to the $\TT$-coinvariants:
\begin{equation}\label{e51}
\sZ_V( - {\rm ~of~} \SS^1)_{\TT} \colon
\sS\Bigl(\Obj\bigl( \bcD(\SS^1) \bigr)\Bigr)_{\TT}
~\simeq~
\sS\Bigl( \underset{r >0} \coprod \sB\sC_r \Bigr)
\longrightarrow
\cX
~.
\end{equation}
As the values of this functor~(\ref{e51}) on $\pm \infty$ are $\uno \in \cX$, 
composing morphisms from $-\infty$ to $+\infty$ defines a local system,
\begin{equation}\label{e52}
\Bigl\lag
\sC_- \racts \sZ_V( - {\rm ~of~} \SS^1)_{\TT} 
\Bigr \rag
\colon
\underset{r>0} \coprod \sB \sC_r
\longrightarrow
\underset{r>0} \prod
\End_{\bfX}(\uno)
~.
\end{equation}
As we now explain, these actions~(\ref{e52}) can be extended to the sought action~(\ref{f3}) by considering how two combinatorial presentations are related through moves beyond isotopy.

Consider combinatorial moves among combinatorial presentations of $\SS^1$ generated by identifying two cyclically-adjacent elements in $\sk_0(D)$, and by introducing a new cyclically arranged element to $\sk_0(D)$.  
These moves generate morphisms in an $(\oo,1)$-category $\bcD(\SS^1)$ (introduced in~(\ref{f10})), whose moduli space of objects is $\Obj\bigl( \bcD(\SS^1) \bigr)$.  
A first key technical feature of $\bcD(\SS^1)$ is that its classifying space $\sB \bcD(\SS^1)\simeq \ast$ is contractible (see {\bf Corollary~\ref{t22}}). Connectedness of this classifying space implies any two combinatorial presentations of $\SS^1$ are related by combinatorial moves; contractibility implies there is an essentially unique way to relate two.  
This technical feature implies $\sS\bigl( \bcD(\SS^1) \bigr) \simeq \bcD(\SS^1)^{\tl\!\tr}$ is the result of freely adjoining both an initial object and a final object (see {\bf Observation~\ref{t52}}).
So the key difficulty highlighted above is resolved upon constructing a $\TT$-invariant extension of~(\ref{e34}) among $(\oo,1)$-categories:
\begin{equation}\label{e35}
\xymatrix{
\sS\Bigl(\Obj\bigl( \bcD(\SS^1) \bigr)\Bigr)
\ar[rrrrrrrrrr]^-{ 
\uno 
\xra{ 
\eta^{\ot \sk_0(-)}} 
(V^\vee \ot V)^{\ot \sk_0(-)} \simeq (V \ot V^\vee)^{\ot \pi_0( \SS^1 \smallsetminus \sk_0(-) )} 
\xra{\epsilon^{\ot \pi_0( \SS^1 \smallsetminus \sk_0(-) )}}
\uno}
\ar[d]
&&
&&
&&
&&
&&
\bfX
\\
\bcD(\SS^1)^{\tl\!\tr}
\ar@{-->}[urrrrrrrrrr]
&&
&&
&&
&&
&&
~.
}
\end{equation}
Such a factorization determines, via composing morphisms from $-\infty$ to $+\infty$, the bottom horizontal functor
\[
\xymatrix{
\ast
\ar@{-->}[rrrr]^-{\bigl \lag \TT \racts \sZ_V(\SS^1) \bigr\rag}
\ar@{-->}[drrrr]^-{\exists !}_-{\lag  \sZ_V(\SS^1) \rag}
&&
&&
\End_{\bfX}(\uno)^{\pitchfork \sB \TT}
\ar[d]
\\
\bcD(\SS^1)
\ar[u]^-{\rm localization}
\ar[rrrr]_-{{\rm (\ref{e35})'s~filler}}
&&
&&
\End_{\bfX}(\uno)
~.
}
\]
The aforementioned key technical feature of $\bcD(\SS^1)$ that its classifying space is contractible (see {\bf Corollary~\ref{t22}}) grants a unique diagonal filler -- this achieves the construction of a point $\sZ_V(\SS^1) \in \End_{\bfX}(\uno)$ that is manifestly compatible with all combinatorially presented values. 
Furthermore, the $\TT$-invariance of such a factorization in~(\ref{e35}) is a further top horizontal lift -- this achieves the sought canonical action $\TT \racts \sZ_V(\SS^1)$ of~(\ref{f3}).

The role of factorization homology in this paper is to construct an extension~(\ref{e35}).
Specifically, the dualizable object $V\in \bfX$ is selected by a monoidal functor $\Dual \xra{\lag V \text{ (with dual) }\rag}  \bfX$ from the \bit{walking dual} (see Definition~\ref{d17}).
Applying the $\alpha$-version of factorization homology over $\SS^1$ to this \emph{monoidal} functor $\lag V\rag$ supplies functors among $(\infty,1)$-categories,
\begin{equation}\label{e36}
\displaystyle \int^{\alpha}_{\SS^1} \Dual
\xra{~\int_{\SS^1} \lag V \rag~}
\displaystyle \int^{\alpha}_{\SS^1} \bfX
\xra{~\underset{\bfX}\bigotimes~}
\bfX
~,
\end{equation}
in which the second functor exploits that the monoidal structure on $\bfX$ is restricted from its symmetric monoidal structure (see~(\ref{e6})).
{\bf Corollary~\ref{t5}} supplies a canonical functor
\begin{equation}\label{e37}
\bcD(\SS^1)^{\tl\!\tr}
\longrightarrow
\displaystyle \int^{\alpha}_{\SS^1} \Dual
\end{equation}
that carries $\pm \infty$ to $\uno \in \int_{\SS^1}\Dual$.  
Composing~(\ref{e36}) and~(\ref{e37}) results in a composite $\TT$-equivariant functor
\begin{equation}\label{e40}
\bcD(\SS^1)^{\tl\!\tr}
\xra{~(\ref{e37})~}
\displaystyle \int^{\alpha}_{\SS^1} \Dual
\xra{~\int_{\SS^1} \lag V \rag~}
\displaystyle \int^{\alpha}_{\SS^1} \bfX
\xra{~\underset{\bfX}\bigotimes~}
\bfX
~,
\end{equation}
which is the sought $\TT$-invariant factorization~(\ref{e35}).
Now, provided $\bfX$ admits geometric realizations, then the colimit
\[
\displaystyle \int^\alpha_{\SS^1} \un{\End}_{\bfX}(V)
~:=~
\colim
\Bigl(
\bcD(\SS^1)^{\tl}
\xra{~(\ref{e40})_{|\bcD(\SS^1)^{\tl}}~}
\bfX
\Bigr)
\]
exists (see {\bf Theorem~\ref{r9}}).
As such, the functor~(\ref{e40}) is precisely the datum of a $\TT$-invariant \bit{trace} morphism in $\bfX$:
\[
\displaystyle \int^\alpha_{\SS^1} \un{\End}_{\bfX}(V)
\xra{~\trace~}
\uno
~.
\]
Furthermore, the endomorphism $\sZ_V(\SS^1)$ constructed above is the composition
\[
\sZ_V(\SS^1)
\colon
\uno
\xra{~\unit~}
{\sf HH}\bigl(
\un{\End}_{\bfX}(\uno)
\bigr)
\xra{~\trace~}
\uno
~,
\]
which makes no reference to a combinatorial presentation of $\SS^1$, and it is manifestly $\TT$-invariant (see {\bf Theorem~\ref{r9}}).

\subsection*{Implementation of factorization homology}\label{sec.fact.versions}
This work uses the theory of factorization homology, the development of which is distributed across multiple works.  
Here, we guide a reader through these works, tailored to how we use factorization homology in this paper.
\begin{itemize}

\item
The work~\cite{oldfact} establishes factorization homology of $\cE_n$-algebras over framed $n$-manifolds.
Let us specialize to dimension $n=1$, since that is what is relevant for the present paper.
There is a symmetric monoidal $(\oo,1)$-category $\Mfld_1^{\fr}$ in which an object is a framed $1$-manifold, the space of morphisms is that of framed embeddings between them, and the symmetric monoidal structure is given by disjoint union of framed $1$-manifolds. Fix a symmetric monoidal $(\oo,1)$-category $\cV$ that admits sifted colimits and such that $\ot$ distributes over sifted colimits.  
Consider the $(\oo,1)$-category $\Alg(\cV)$ of associative algebras in $\cV$.
Factorization homology, as established in~\cite{oldfact} in the case $n=1$, is a functor
\[
\int
\colon
\Alg(\cV)
\longrightarrow
\Fun^{\ot}( \Mfld_1^{\fr} , \cV)
~,\qquad
A
\longmapsto
\left(
M
\longmapsto
\int_M A
\right)
~.
\]
One may reasonably refer to this notion of factorization homology as the $\alpha$-version, since it appears first.  
Therefore, to disambiguate, we use the notation
\[
\int^\alpha_M A
\]
for the version of factorization homology established in~\cite{oldfact}.

Informally, for $A$ an associative algebra and for $M$ a framed 1-manifold, a point in $\int^\alpha_M A$ is a pair $(S, \ell)$ consisting of a finite subset $S \subset M$ the complement of which is a disjoint union of open 1-disks, and a labeling $\ell$ of each such disk by an element in $A$.
\\

\item
The work~\cite{fact1} establishes factorization homology of $(\infty,n)$-categories over vari-framed $n$-manifolds (at least for $n\leq 2$ as explained in~\cite{corrigendum}).  
Let us specialize to dimension $n=1$, since that is what is relevant for the present paper.
There is an $(\oo,1)$-category $\cMfd_1^{\sfr}$ informally described as follows (see~\cite{fact1} for full details).  
An object is a \bit{solidly 1-framed stratified space}, which is a compact stratified space $X$ equipped with an injection $\tau_X \overset{\varphi}\hookrightarrow \epsilon^1_X$ of the tangent constructible bundle of $X$ into a trivial rank-1 vector bundle over $X$.  The existence of such an injection $\varphi$ implies each stratum of $X$ has dimension at most 1 -- so $X$ is, in particular, a CW complex of dimension at most 1.  Framed 1-manifolds are examples of solidly 1-framed stratified spaces. Consider the $(\oo,1)$-category $\Cat_1$ of $(\infty,1)$-categories.

Factorization homology, as established in~\cite{fact1}, is a functor
\[
\int
\colon
\Cat_1
\longrightarrow
\Fun(\Mfd_1^{\sfr} , \Spaces)
~,\qquad
\cC
\longmapsto
\left(
M
\longmapsto
\int_M \cC
\right)
~.
\]
One might reasonably refer to this notion of factorization homology as the $\beta$-version, or geometric $\beta$-version, since it appears second and is founded on differential topology.
However, because this version subsumes the previous (as discussed in the next point), we will not use the term $\beta$.

Informally, for $\cC$ an $(\infty,1)$-category and for $M$ a framed 1-manifold, a point in $\int^\beta_M \cC$ is a pair $(S, \ell)$ consisting of a finite subset $S \subset M$, the complement of which is a disjoint union of open 1-disks, and a labeling $\ell$ of each such disk by a morphism in $\cC$ and each element in $S$ by an object in $\cC$ with source-target compatibility.
Note that such a $(S \subset M)$ determines a finite directed graph (i.e., a quiver), and that $\ell$ is a representation of that quiver in $\cC$.
\\

\item
Taking deloop of monoids (i.e., associative algebras in $\Spaces$) defines a functor $\fB \colon \Alg(\Spaces) \to \Cat_1$.
Proposition~\ref{t23} articulates an agreement between the $\alpha$- and the $\beta$-versions of factorization homology of a monoid: $\int^\alpha_M A \simeq \int^\beta_M \fB A$. 
\\

\item
The work~\cite{circle} establishes factorization homology of $(\infty,1)$-categories over quivers and other objects formally derived by such.  
Specifically, fix an $(\oo,1)$-category $\cX$ that admits finite limits, sifted colimits, and such that sifted colimits distribute over finite products.  
($\cX = \Spaces$ and $\cX= \Cat_1$ are examples.)
Consider the $(\oo,1)$-category $\fCat_1[\cX]$ of category-objects in $\cX$ -- these are simplicial objects in $\cX$ that satisfy a Segal condition.
Consider the category $\Quiv$ of quivers (i.e., $(\infty,1)$-categories freely generated by finite directed graphs).
The standard functor $\bDelta \to \Quiv$ selecting the linear quivers can be viewed as a category-object in $\Quiv^{\op}$
There is an $(\oo,1)$-category $\bcM$ with finite products, equipped with a finite-product preserving functor $\Quiv^{\op} \to \bcM$, that is initial among those for which the Hochschild homology of $\bDelta^{\op} \to \Quiv^{\op} \to \bcM$ exists.
Factorization homology, as established in~\cite{circle}, is a functor
\[
\int
\colon
\fCat_1[\cX]
\longrightarrow
\Fun(\bcM, \cX)
~,\qquad
\cC
\longmapsto
\left(
M
\longmapsto
\int_M \cC
\right)
~.
\]
One might reasonably refer to this notion of factorization homology as the combinatorial $\beta$-version, since it is founded on the combinatorics of quivers.
The work~\cite{circle} is logically independent of~\cite{oldfact} and of~\cite{fact1}.  
Proposition~B.3.5 of~\cite{circle} identifies the $(\oo,1)$-categories $\bcM \simeq \cMfd_1^{\sfr}$;
Theorem~B.3.7 of~\cite{circle} states that, through this identification, the combinatorial $\beta$-version of factorization homology established in~\cite{circle} agrees with the geometric $\beta$-version of factorization homology established in~\cite{fact1}.
One may therefore reasonably referred to either of these notions of factorization homology simply as the $\beta$-version, or simply as factorization homology.

Informally, for $\cC$ an $(\infty,1)$-category and for $M \in \bcM$, a point in $\int^\beta_M \cC$ is represented by a quiver $\Gamma$ refining $M$ and a representation of $\Gamma$ in $\cC$.
\\

\end{itemize}

The present paper makes use the $\beta$-version of factorization homology, as well as the $\alpha$-version.
Because we find the geometric approach to factorization homology more intuitive, in this paper we will embrace the developments in~\cite{fact1}, though a reader is invited to translate each reference to this geometric approach to the combinatorial approach of quivers as in~\cite{circle}.

\begin{remark}
The work~\cite{enriched1} constructs factorization homology of enriched $(\infty,1)$-categories.
That work is especially relevant for topological quantum field theory.
That work builds upon those mentioned above, though the present paper does not make use of it.
\end{remark}

\subsection*{Index of notation}

We employ the following notational conventions in this paper.

\begin{itemize}

\item $\Spaces$ is the $(\oo,1)$-category of spaces (i.e., the $\oo$-categorical localization of the category of Kan complexes, localized on homotopy equivalences).

\item For $n \geq 0$, the $(\oo,1)$-category $\Cat_n$ is that of $(\oo,n)$-categories, where it is understood that $\Cat_0 = \Spaces$.  
For $0\leq k \leq n$, there is an understood fully-faithful functor $\Cat_k \subset \Cat_n$.

\item
Let $X$ be a space, regarded as an $\infty$-groupoid.
Let $Z\in \cK$ be an object in an $(\oo,1)$-category
The \bit{cotensor} of $Z$ by $X$ is the limit $Z^{\pitchfork X} := \limit( X \xra{!} \ast \xra{\lag Z \rag} \cK )$ of the constant functor at $Z$ from $X$.

\item
$\TT=\sB \ZZ$ is the \bit{circle group}.
For $X\in \cK$, the object in $\cK$ of \bit{$\TT$-modules} in $X$ is $X^{\pitchfork \sB \TT}$.
So, for $\cC\in \Cat_n$, the $(\infty,n)$-category of $\TT$-modules in $\cC$ is $\Fun(\sB \TT , \cC) \simeq \cC^{\pitchfork \sB \TT}$.  

\item
For $\cK$ an $(\oo,1)$-category, the $(\oo,1)$-category $\cK^{\tl}$ is the result of freely adjoining an initial object to $\cK$, and the $(\oo,1)$-category $\cK^{\tr}$ is the result of freely adjoining a final object to $\cK$.
The $(\oo,1)$-category $\cK^{\tl\!\tr}$ is $(\cK^{\tl})^{\tr}\simeq (\cK^{\tr})^{\tl}$.

\item\label{d0}
Let ${\bfX}$ be a symmetric monoidal $(\infty,1)$-category.
The object $\uno \in \bfX$ is its symmetric monoidal unit.
The category-object internal to $(\infty,1)$-categories $\fB\bfX$ is the deloop of the underlying monoidal $(\oo,1)$-category $\bfX$.
Specifically, as a simplicial-category, $\fB \bfX = \bBar_\bullet(\bfX)$ is the bar-construction.

\end{itemize}

\section{Factorization homology of the walking adjunction}

The main technical result of this section is the calculation of $\int_{\SS^1}\Adj$, the factorization homology over the circle of the walking adjunction. 
We state this calculation, Theorem~\ref{t10}, below; we prove it at the conclusion of~\S\ref{sec.fact.adj}. 
See Appendix~\S\ref{sec.adjunction} for definitions of 
the paracyclic category $\para$, the walking dual $\Dual$, and the walking adjunction $\Adj$; see Appendix~\S\ref{sec.recall.fact} for a survey of the $\alpha$- and $\beta$-versions of factorization homology.

\begin{theorem}\label{t10}
There is a canonical equivalence between cospans of $\TT\simeq \sB\ZZ$-module-$(\oo,1)$-categories:
\[
\Bigl(
\int_{\SS^1}^\alpha\un{\End}_{\Adj}(-)
\longrightarrow
\int_{\SS^1} \Adj
\longleftarrow
\int_{\SS^1}^\alpha \un{\End}_{\Adj}(+)
\Bigr)
~\simeq~
\Bigl(
\para^{\tl}
\longrightarrow
\para^{\tl\!\tr}
\longleftarrow
(\para^{\op})^{\tr}
\Bigr)
~.
\]
In other words, there are canonical $\TT\simeq \sB\ZZ$-equivariant equivalences
\begin{eqnarray}
\nonumber
\int_{\SS^1}\Adj
&~\simeq~&
\para^{\tl\!\tr}
\\
\nonumber
\int_{\SS^1}^\alpha \un{\End}_{\Adj}(-)
&~\simeq~&
\para^{\tl}
\\
\nonumber
\int_{\SS^1}^\alpha \un{\End}_{\Adj}(+)
&~\simeq~&
(\para^{\op})^{\tr}
~,
\end{eqnarray}
compatible with respect to the contravariant automorphism $\para^{\op}\simeq \para$ of Observation~\ref{t34}.
\\
\end{theorem}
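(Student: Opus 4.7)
The plan is to unpack the definition of factorization homology directly for the walking adjunction and to recognize the resulting indexing as $\para^{\tl\!\tr}$. Since $\Adj$ is generated by two objects $+$ and $-$ together with an adjoint pair of 1-morphisms between them, any $\Adj$-labeled configuration of disks on $\SS^1$ reduces (after absorbing adjunction 2-morphism data via the zig-zag identities) to one of three combinatorial types: a constant $-$-labeling, a constant $+$-labeling, or a strictly alternating cyclic labeling with $2(n+1)$ transition points for some $n \geq 0$. The two constant configurations will yield the cone points of $\para^{\tl\!\tr}$, while the alternating configurations will organize into the interior $\para$.

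The first step is to construct an explicit comparison functor $\Phi \colon \para^{\tl\!\tr} \to \int_{\SS^1} \Adj$, sending $[n] \in \para$ to the canonical configuration with $2(n+1)$ alternating labeled points on $\SS^1$, and sending $-\infty$ and $+\infty$ respectively to the constant-$-$ and constant-$+$ configurations obtained as the images of $-, + \in \Adj$ under the canonical map $\Adj \to \int_{\SS^1}\Adj$. Face/degeneracy morphisms of $\para$ should correspond to merging adjacent $\pm$-pairs via counits (resp.\ inserting pairs via units), while the generating $\ZZ$-automorphism of $[n]$ corresponds to rotating the labeled configuration by one ``slot'' on the universal cover of $\SS^1$. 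I would then verify that $\Phi$ is an equivalence: essential surjectivity follows from a direct combinatorial reduction of arbitrary labeled disk covers to one of the three normal forms above, and full faithfulness reduces to a mapping-space computation using the colimit presentation of $\int_{\SS^1}^\alpha$ over $\bcD(\SS^1)$ afforded by Corollary~\ref{t5}. The endomorphism variants $\int_{\SS^1}^\alpha \un\End_{\Adj}(\pm)$ arise as the full subcategories spanned by configurations whose distinguished marked point lies in a $\pm$-region: the $-$-side retains the constant-$-$ cone but excludes the constant-$+$ one, yielding $\para^{\tl}$; the $+$-side yields $(\para^{\op})^{\tr}$ via the equivalence $\para^{\op} \simeq \para$ of Observation~\ref{t34}. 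Cospan compatibility is automatic from these identifications, and $\TT$-equivariance is inherited from the rotation action on $\SS^1$, which implements precisely the canonical $\ZZ$-shift automorphism on the paracyclic model.

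The principal difficulty is verifying that the automorphism group of the object corresponding to a $2(n+1)$-fold alternating configuration is the full $\ZZ$ rather than its finite quotient $\ZZ/(n+1)$---equivalently, that we obtain the \emph{paracyclic} category $\para = \bDelta_{\circlearrowleft}$ and not the ordinary cyclic category $\Lambda$. This extra $\ZZ$-torsor structure must come from the interplay between the zig-zag identities of the adjunction and the universal cover of $\SS^1$, and it is precisely what encodes the continuous $\TT = \sB\ZZ$-action (as opposed to merely a finite cyclic rotational symmetry of a given configuration). I expect this to be the technical heart of the argument, requiring a careful analysis of how 2-morphism data in $\Adj$ interacts with the factorization homology colimit, likely via a reduction to the universal cover $\RR \to \SS^1$ along which the paracyclic structure unfolds naturally into the ordinary simplicial structure on $\bDelta$.
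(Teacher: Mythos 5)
Your global picture (constant~$+$, constant~$-$, and mixed configurations; cone points; the paracyclic structure unfolding along the universal cover; the $\TT$-action from rotation) matches what the theorem says, but as a proof the proposal has genuine gaps. First, it is circular: you invoke Corollary~\ref{t5} to present $\int_{\SS^1}\Adj$ as a colimit over $\bcD(\SS^1)$, but Corollary~\ref{t5} is itself deduced from Theorem~\ref{t10}; the legitimate inputs are Lemma~\ref{t24} and Observation~\ref{t39}, which exhibit $\int_{\SS^1}\Adj$ as the localization of the coCartesian fibration $\int^{\lax}_{\SS^1}\Adj\to\bcD(\SS^1)$ at the coCartesian morphisms. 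Second, and more seriously, your plan reduces full faithfulness of the comparison functor $\Phi$ to ``a mapping-space computation'' in that localization --- but computing mapping spaces in an $\infty$-categorical localization is exactly the hard content, and nothing in the proposal supplies a method. The paper's argument is engineered to avoid any such direct computation: it decomposes $\cA=\int^{\lax}_{\SS^1}\Adj$ into the full subcategories $\cA_{\pm}$, $\cA_0$ and shows the resulting square is a pushout that survives localization (Observation~\ref{t40}, Lemma~\ref{t43}, Corollary~\ref{cor.pushout.loc}); it constructs left adjoints to the inclusions $\int^{\lax}_{\SS^1}\fB\un{\End}_{\Adj}(\pm)\hookrightarrow\cA_{\pm}$ whose units are coCartesian (Lemma~\ref{t44}), so the localizations of $\cA_{\pm}$ and $\cA_0$ are identified with $\int_{\SS^1}\fB\un{\End}_{\Adj}(\pm)$ and with that category minus the basepoint (Corollary~\ref{cor.last}); it computes $\int_{\SS^1}\fB\sO\simeq\Disk^{\fr}_{1/\SS^1}\simeq\para^{\tl}$ by $\ot$-excision (Lemma~\ref{t11}, Observation~\ref{t31}); and it glues via $\para^{\tl}\underset{\para}\amalg\para^{\tr}\simeq\para^{\tl\!\tr}$, which rests on contractibility of $\sB\para$ (Corollary~\ref{s2}, Lemma~\ref{t53}).

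This also shows your assessment of where the difficulty lies is misplaced. The paracyclic-versus-cyclic issue (automorphism group $\ZZ$ rather than $\ZZ/(n{+}1)$) is settled already at the level of the disk categories, before any adjunction data enters: $\Disk^{\fr}_{1/\SS^1}\simeq\para^{\tl}$ and $\bcD(\SS^1)\simeq\para$ via $\exp^{-1}$ along the universal cover (Observation~\ref{t31}(2), Corollary~\ref{t21}). By contrast, your functor $\Phi$ would need, for instance, $\Hom(-\infty,+\infty)\simeq\ast$ in $\int_{\SS^1}\Adj$, i.e.\ precisely the statement $\sB\para\simeq\ast$ exploited in Lemma~\ref{t53}; without it the two cone points need not be adjoined \emph{freely}, and the target of your identification could fail to be $\para^{\tl\!\tr}$. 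Relatedly, your ``reduction to three normal forms via the zig-zag identities'' conflates non-invertible 2-cells of $\Adj$ (which produce the face/degeneracy and cone morphisms, and must survive) with the morphisms actually inverted, namely coCartesian lifts of refinements in $\bcD(\SS^1)$; note also that 1-strata carry arbitrary 1-morphisms of $\Adj$ (objects of $\sO$, $\sO^{\op}$, $\sO_{\pm}$), not just $\sL$ and $\sR$, so even the object-level reduction requires the fibration bookkeeping over $\bcD(\SS^1)$ rather than an appeal to the triangle identities.
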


The next result makes reference to the monoidal $(\infty,1)$-category $\Dual$ of Definition~\ref{d17}, as well as the $\TT$-module-$(\infty,1)$-category $\bcD(\SS^1)$ of~(\ref{f10}).
\begin{cor}\label{t5}
There is a canonical $\TT$-equivariant functor between $(\infty,1)$-categories,
\[
\bcD(\SS^1)^{\tl\!\tr}
\longrightarrow
\displaystyle \int^\alpha_{\SS^1} \Dual
~,
\]
that carries both cone-points to $\uno \in \int^\alpha_{\SS^1} \Dual$.

\end{cor}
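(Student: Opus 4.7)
The plan is to construct the desired functor as a composition of a natural combinatorial functor with an identification supplied by Theorem~\ref{t10}.

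The first step is to identify $\int^\alpha_{\SS^1}\Dual$ in paracyclic terms. The walking dual $\Dual$ is canonically identified with the endomorphism monoidal category $\un{\End}_{\Adj}(-)$ appearing in Theorem~\ref{t10}: the adjunction $f \dashv g$ in $\Adj$, with $f \colon - \to +$ and $g \colon + \to -$, equips $g \circ f \in \un{\End}_{\Adj}(-)$ with canonical duality data, so the universal property of $\Dual$ produces a canonical monoidal functor $\Dual \to \un{\End}_{\Adj}(-)$ that one verifies to be an equivalence using the presentations recalled in Appendix~\S\ref{sec.adjunction}. Combined with Theorem~\ref{t10}, this supplies a $\TT$-equivariant equivalence
\[
\int^\alpha_{\SS^1}\Dual ~\simeq~ \int^\alpha_{\SS^1}\un{\End}_{\Adj}(-) ~\simeq~ \para^{\tl}
\]
under which the cone-point is identified with $\uno$.

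The second step is to construct a $\TT$-equivariant combinatorial functor $\bcD(\SS^1)^{\tl\!\tr} \to \para^{\tl}$ carrying both cone-points to the cone-point. On an object $D$ of $\bcD(\SS^1)$, the functor returns the cyclically-ordered 0-skeleton $\sk_0(D) \in \para$. The generating morphisms of $\bcD(\SS^1)$ (identifying cyclically-adjacent 0-cells; introducing a new 0-cell) correspond precisely to the elementary paracyclic morphisms (face and degeneracy maps), so the assignment is functorial. The rotation action of $\TT$ on $\SS^1$ matches paracyclic rotation on the target, giving $\TT$-equivariance. Both cone-points of $\bcD(\SS^1)^{\tl\!\tr}$, representing degenerate or empty presentations, are sent to the cone-point of $\para^{\tl}$.

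Composing these two constructions produces the sought $\TT$-equivariant functor $\bcD(\SS^1)^{\tl\!\tr} \to \int^\alpha_{\SS^1}\Dual$ carrying both cone-points to $\uno$. The main obstacle is Step~1: establishing the canonical monoidal equivalence $\Dual \simeq \un{\End}_{\Adj}(-)$ and verifying its compatibility with the $\TT$-equivariance supplied by Theorem~\ref{t10} requires carefully unwinding the universal properties of $\Dual$ and $\Adj$ from Appendix~\S\ref{sec.adjunction}. Once this identification is in hand, the combinatorial functor of Step~2 is essentially forced by the definition of $\bcD(\SS^1)$ as the category of combinatorial presentations of $\SS^1$, and the final composition is formal.
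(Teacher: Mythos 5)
There is a genuine gap, and it sits in your Step~1: the walking dual is \emph{not} equivalent to $\un{\End}_{\Adj}(-)$. By Theorem~\ref{t7}, $\un{\End}_{\Adj}(-)\simeq\sO$ is the walking monad, which corepresents associative algebras (Observation~\ref{t33}), whereas $\Dual$ corepresents dualizable objects (Definition~\ref{d17}); these universal properties are different and the two monoidal categories are inequivalent. Concretely, $g\circ f=\sR\circ\sL$ carries no ``canonical duality data'' inside $\un{\End}_{\Adj}(-)$ --- what it carries is the monad (associative algebra) structure. Indeed, in $\sO$ the only dualizable object is the monoidal unit $\emptyset$: a candidate counit would be an order-preserving map from a non-empty linearly ordered set to the empty one, and no such map exists. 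The duality in $\Adj$ relates the $1$-morphisms $\sL\colon -\to +$ and $\sR\colon +\to -$ between two \emph{distinct} objects; it only becomes a duality of objects after collapsing $-$ and $+$, which is exactly what the quotient morphism $\Adj\to\fB\Dual$ of~(\ref{e15}) does (cf.\ the remark after Definition~\ref{d17}, exhibiting $\Dual$ as endomorphisms in the pushout $\ast\underset{\Obj(\Adj)}\amalg\Adj$). Consequently your claimed identification $\int^\alpha_{\SS^1}\Dual\simeq\para^{\tl}$ is unsupported; the corollary asserts only a functor into $\int^\alpha_{\SS^1}\Dual$, not a computation of it.

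Your Step~2 then fails on its own terms as well: there is no functor $\bcD(\SS^1)^{\tl\!\tr}\to\para^{\tl}$ that restricts on $\bcD(\SS^1)$ to (either variance of) the equivalence of Corollary~\ref{t21} and carries the final cone-point to the cone-point of $\para^{\tl}$. A functor out of a freely adjoined final object is the datum of a cocone on the restriction, but the freely adjoined initial object of $\para^{\tl}$ receives no morphisms from objects of $\para$, so no cocone with that vertex exists unless the restriction is constant at the cone-point --- which contradicts your prescription $D\mapsto\sk_0(D)$. The paper's route avoids both problems: compose the equivalence $\bcD(\SS^1)^{\tl\!\tr}\simeq\para^{\tl\!\tr}$ (Corollary~\ref{t21}) with the equivalence $\para^{\tl\!\tr}\simeq\int_{\SS^1}\Adj$ of Theorem~\ref{t10}, apply factorization homology to the quotient~(\ref{e15}) to obtain $\int_{\SS^1}\Adj\to\int_{\SS^1}\fB\Dual$, and identify $\int_{\SS^1}\fB\Dual\simeq\int^\alpha_{\SS^1}\Dual$ by Proposition~\ref{t23}. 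Both cone-points then land on $\uno$ because the quotient collapses the two objects of $\Adj$, carrying the two constant disk-labelings (constant at $-$, respectively at $+$) to the constant labeling at the unique object of $\fB\Dual$.
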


\begin{proof}
The $\TT$-equivariant functor is the composition of the following $\TT$-equivariant functors, explained below.
We explain the following sequence of $\TT$-equivariant functors, the composite of which is that of the corollary:
\begin{eqnarray}
\nonumber
\bcD(\SS^1)^{\tl\!\tr}
&
\underset{\rm Cor~\ref{t21}}{\xra{~\simeq~}}
&
\para^{\tl\!\tr}
\\
\nonumber
&
\underset{\rm Thm~\ref{t10}}{~\simeq~}
&
\displaystyle \int_{\SS^1} \Adj
\\
\nonumber
&
\overset{\rm \int_{\SS^1} (\ref{e15})}
\longrightarrow
&
\displaystyle \int_{\SS^1} \fB\Dual
\\
\nonumber
&
\underset{\rm Prop~\ref{t23}}{~\simeq~}
&
\displaystyle \int^\alpha_{\SS^1} \Dual
~.
\end{eqnarray}
The first functor is Corollary~\ref{t21}, which identifies $\bcD(\SS^1)$ and $\para$.  
The second functor is Theorem~\ref{t10}.
The third functor is factorization homology applied to the quotient functor~(\ref{e15})
The fourth functor is Proposition~\ref{t23}, which identifies the $\alpha$-version of factorization homology with the $\beta$-version of factorization homology.
\\

\end{proof}

\subsection{Factorization homology of the walking monad and comonad}

See Appendix~\S\ref{sec.adjunction} for definition of the walking monad $\sO$. The goal of this section is to identify the values of $\alpha$-factorization homology of the monoidal 
$(\infty,1)$-categories $\sO$ and $\sO^{\op}$.
These values will be $(\infty,1)$-categories.

\begin{lemma}\label{t31}
For each framed 1-manifold $M$, consider the composite functor among $(\infty,1)$-categories:
\[
\Disk^{\fr}_{1/M} \xra{\rm forget} \Disk^{\fr}_1 \xra{~\pi_0~} {\sf Sets}
~.
\]
\begin{enumerate}
\item
In the case that $M=\RR^1$, the linear order on $\pi_0(U)$ inherited from an embedding $U\hookrightarrow \RR^1$ determines a lift of this composite functor to $\sO\simeq (\bDelta)^{\tl}$.  This lift is an equivalence between $(\infty,1)$-categories:
\begin{equation}\label{e44}
\Disk^{\fr}_{1/\RR^1} \xra{~\simeq~} \sO \simeq (\bDelta)^{\tl}
~,\qquad
(U\hookrightarrow \RR^1)
\mapsto 
\pi_0(U)
~.
\end{equation}

\item
For the case that $M=\SS^1$, consider the universal covering space $\RR^1 \xra{\sf exp} \SS^1$.
The linear order on $\pi_0({\sf exp}^{-1} U )$ inherited from an embedding ${\sf exp}^{-1}(U)\hookrightarrow \RR^1$, together with the $\pi_1(\SS^1)=\ZZ$-action by deck-transformations, determines a lift of this composite functor to $\para^{\tl}$.  
This lift is an equivalence between $\TT\simeq \sB\ZZ$-module-$(\infty,1)$-categories:
\begin{equation}\label{e45}
\Disk^{\fr}_{1/\SS^1} \xra{~\simeq~} \para^{\tl}
~,\qquad
(U\hookrightarrow \SS^1)
\mapsto 
\pi_0({\sf exp}^{-1}U)
~.
\end{equation}

\end{enumerate}

\end{lemma}

\begin{proof}
Let $M$ be a framed 1-manifold. Lemma~2.12 of~\cite{oldfact} identifies the space of objects of the $\infty$-overcategory as the space of unordered configurations of finite subsets of $M$:
\[
\Obj(\Disk^{\fr}_{1/M})
~\simeq~ 
\underset{r \geq 0} \coprod \conf_r(M)_{\Sigma_r} 
~.
\]
For each $r\geq 0$, straight-line homotopy in $\RR^1$ defines a $\Sigma_r$-equivariant deformation retraction of $\conf_r(\RR^1)$ onto the subspace of evenly spaced configurations in $[0,r] \subset \RR$ that contain $0$ and $r$.  
This subspace has one connected component for each linear ordering of the set $\{1,\dots,r\}$, and each connected component is a singleton.   
Consequently, the space $\conf_r(\RR^1)_{\Sigma_r} \simeq \ast$ is contractible.
We conclude that the functor~(\ref{e44}) induces an equivalence between spaces of objects:
\begin{equation}
\label{e44'}
\Obj(\Disk^{\fr}_{1/\RR^1})
\xra{~\simeq~}
\Obj(\sO)
~.
\end{equation}
In particular~(\ref{e44}) is essentially surjective.

For $(U \hookrightarrow M) \in \Disk^{\fr}_{1/M}$ an object, there is a canonical identification of the $\infty$-overcategory,
\[
(\Disk^{\fr}_{1/M})_{/(U \hookrightarrow M)}
~\simeq~
\Disk^{\fr}_{1/U}
~.
\]
Taking disjoint unions defines a functor from the $\pi_0 U$-fold product of $\infty$-overcategories,
\[
(\Disk^{\fr}_{1/\RR^1})^{\times \pi_0 U}
\xra{~\sqcup~}
\Disk^{\fr}_{1/U}
~.
\]
This functor is an equivalence, with inverse given $(V \hookrightarrow U) \mapsto \left( V \cap U_\alpha \hookrightarrow U_\alpha \right)_{\alpha \in \pi_0 U}$.
Observation~\ref{t99} of \S\ref{sec.monad} supplies a similar equivalence concerning the category $\sO$.
The equivalence~(\ref{e44'}) then implies that the functor~(\ref{e44}) induces an equivalence between spaces of objects of over categories:
\[
\Obj\left(
(\Disk^{\fr}_{1/\RR^1})_{/(U \hookrightarrow \RR^1)} 
\right)
~\simeq~
\Obj(\Disk^{\fr}_{1/\RR^1})^{\times \pi_0 U}
\underset{(\ref{e44'})}{\longrightarrow}
\Obj(\sO)^{\times \pi_0 U}
\underset{\rm Obs~\ref{t99}}{~\simeq~}
\Obj(\sO_{/\pi_0 U})
~.
\]
This equivalence evidently lies over the equivalence~(\ref{e44'}), via the forgetful functors $(\Disk^{\fr}_{1/\RR^1})_{/(U \hookrightarrow \RR^1)} \to \Disk^{\fr}_{1/\RR^1}$ and $\sO_{/\pi_0 U} \to \sO$.  
Now, generally, for $c,c'\in \cC$ objects in an $(\oo,1)$-category, the fiber of $\cC_{/c'} \to \cC$ over $c$ is the space $\Hom_{\cC}(c',c)$.
We conclude that the functor~(\ref{e44}) is fully faithful.

Similarly, for each $r> 0$, straight-line homotopy in the universal cover $\RR^1$ of $\SS^1$ defines a $\Sigma_r$-equivariant deformation retraction of $\conf_r(\SS^1)$ onto its subspace of evenly spaced configurations.  This subspace has one connected component for each cyclic ordering of $\{1,\dots,r\}$.  
Rotation defines a transitive action by the circle group $\SS^1$ on each connected component of this subspace, whose stabilizer of a point is $\sC_r \subset \SS^1$, the cyclic group of order $r$.
In this way, we have a homotopy equivalence $\conf_r(\SS^1)_{\Sigma_r} \simeq \SS^1_{/\sC_r} \simeq \SS^1 \simeq \sB \ZZ$.  
We conclude that the functor~(\ref{e45}) induces an equivalence between spaces of objects:
\begin{equation}
\label{e45'}
\Obj(\Disk^{\fr}_{1/\SS^1})
\xra{~\simeq~}
\Obj(\para^{\tl})
~.
\end{equation}
In particular~(\ref{e45}) is essentially surjective.

Now, for each $(\ZZ \lacts I) \in \para^{\tl}$, with quotient $I \xra{q} I_{/\ZZ}$, taking joins defines a functor:
\[
\sO^{\times I_{/\ZZ}}
\longrightarrow
(\para^{\tl})_{/I}
~,\qquad
(J_k)_{k\in I_{/\ZZ}}
\longmapsto 
\left(
\underset{i \in I} \bigstar J_{q(i)}
\to 
\underset{i \in I} \bigstar \ast
= I
\right)
~.
\]
This functor is an equivalence, with inverse given by $(J\xra{f} I)\mapsto \left(  f^{-1}(i)) \right)_{i\in I_0}$, where $I_0 \subset I$ is a choice of fundamental domain of $(\ZZ \lacts I)$.
The equivalence~(\ref{e44'}) then implies that the functor~(\ref{e45}) induces an equivalence between spaces of objects of overcategories:
\[
\Obj\left(
(\Disk^{\fr}_{1/\SS^1})_{/(U \hookrightarrow \SS^1)} 
\right)
~\simeq~
\Obj(\Disk^{\fr}_{1/\RR^1})^{\times \pi_0 U}
\underset{(\ref{e44'})}{\longrightarrow}
\Obj(\sO)^{\times \pi_0 U}
~\simeq~
\Obj\left(
(\para^{\tl})_{/\pi_0 U} \right)
~.
\]
This equivalence evidently lies over the equivalence~(\ref{e45'}), via the forgetful functors $(\Disk^{\fr}_{1/\SS^1})_{/(U \hookrightarrow \SS^1)} \to \Disk^{\fr}_{1/\SS^1}$ and $(\para^{\tl})_{/\pi_0 U} \to \para^{\tl}$.  
We conclude via the same logic as above that the functor~(\ref{e45}) is an equivalence, as desired.

\end{proof}

\begin{lemma}\label{t11}
For each compact solidly 1-framed stratified space $M$, there are canonical equivalences between $(\infty,1)$-categories:
\[
\int_M \fB \un{\End}_{\Adj}(-)
~\simeq~
\Disk^{\fr}_{1/M\smallsetminus \sk_0(M)}
\qquad
\text{ and }
\qquad
\int_M  \fB \un{\End}_{\Adj}(+)
~\simeq~
\bigl(\Disk^{\fr}_{1/M\smallsetminus \sk_0(M)}\bigr)^{\op}
~.
\]

\end{lemma}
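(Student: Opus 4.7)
The plan is to establish the identification for $-$; the case of $+$ then follows by the contravariant involution of $\Adj$ exchanging the two objects, which sends $\un{\End}_{\Adj}(-)\simeq \sO$ to $\sO^{\op}$.

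First I would recall from Appendix~\S\ref{sec.adjunction} the identification $\un{\End}_{\Adj}(-) \simeq \sO$ of monoidal $(\infty,1)$-categories, and combine it with Observation~\ref{t31}(1) to obtain an equivalence $\un{\End}_{\Adj}(-) \simeq \Disk^{\fr}_{1/\RR^1}$. This exhibits $\fB\un{\End}_{\Adj}(-)$ very concretely: on each smooth framed 1-disk basic of $M$, it evaluates to $\Disk^{\fr}_{1/\RR^1}$. Using this, I would construct a comparison functor
\[
\Phi\colon \Disk^{\fr}_{1/M\smallsetminus \sk_0(M)} \longrightarrow \int_M \fB\un{\End}_{\Adj}(-),
\]
by sending a framed disk $(U\hookrightarrow M\smallsetminus\sk_0(M))$ --- a disjoint union of framed copies of $\RR^1$ avoiding the 0-strata --- to the tuple of objects of $\sO$ assembled via the cosheaf structure defining the factorization homology as a colimit over the appropriate disk category of $M$.

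Next I would show $\Phi$ is an equivalence by excision along $\sk_0(M)\subset M$. Writing $M$ as a union of the components of $M\smallsetminus\sk_0(M)$ and basic stratified neighborhoods of each 0-stratum, the factorization homology decomposes as a colimit of contributions from each piece. On each smooth component $C$, the contribution is $\int_C \fB\sO \simeq \Disk^{\fr}_{1/C}$ --- the standard pushforward identity for factorization homology of a monoidal category over a smooth 1-manifold, using precisely that $\sO\simeq \Disk^{\fr}_{1/\RR^1}$ is the tautological $\EE_1$-algebra in $(\infty,1)$-categories. On each basic around a 0-stratum, $\fB\un{\End}_{\Adj}(-)$ becomes contractible: within the walking adjunction $\Adj$, the 0-strata of $M$ parametrize unit/counit moves that pass through the object $+$, and this data is trivialized onto the monoidal unit $\uno$ after delooping $\un{\End}_{\Adj}(-)$. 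Assembling the pieces recovers $\Disk^{\fr}_{1/M\smallsetminus\sk_0(M)}$, identifying $\Phi$ with the claimed equivalence.

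The main obstacle will be the trivialization step on 0-stratum basics: verifying that $\fB\un{\End}_{\Adj}(-)$ contributes nothing around each 0-stratum and that the excision colimit recombines precisely to the disk category of $M\smallsetminus\sk_0(M)$ without additional higher morphisms. This relies on the specific shape of the walking adjunction --- that the only morphisms in $\Adj$ exiting the endomorphism monoidal category of $-$ are the unit and counit, both of which land in the $+$ summand and are invisible to $\un{\End}_{\Adj}(-)$ once delooped.
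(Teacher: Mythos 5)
Your opening reduction (prove the $-$ case, deduce the $+$ case from the contravariant identification $\un{\End}_{\Adj}(+)^{\op}\simeq\sO\simeq\un{\End}_{\Adj}(-)$) and your use of Observation~\ref{t31}(1) to identify $\sO\simeq\Disk^{\fr}_{1/\RR^1}$ match the paper. However, the two steps carrying the real content are not secured. First, your smooth-component step asserts $\int_C\fB\sO\simeq\Disk^{\fr}_{1/C}$ as a ``standard pushforward identity,'' but for smooth $C$ (where $\sk_0(C)=\emptyset$) this \emph{is} the lemma; there is no off-the-shelf identity to cite, and asserting it is circular. The paper proves exactly this by showing that $M'\mapsto\Disk^{\fr}_{1/M'}$ extends to a symmetric monoidal functor on $\Mfld_1^{\fr}$ satisfying $\ot$-excision (Lemma~2.4.1 of~\cite{zp}), agreeing with $\int^\alpha_{-}\sO$ on $\RR^1$, and then invoking the characterization of factorization homology (Theorem~1.2 of~\cite{oldfact}) to conclude the comparison map is an equivalence. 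Some argument of this kind is needed; your proposal never supplies it.

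Second, your treatment of the $0$-strata is wrong as stated. The coefficient in $\int_M\fB\un{\End}_{\Adj}(-)$ is the one-object $(\infty,2)$-category $\fB\sO$: the unit and counit of $\Adj$, and the object $+$, simply do not occur in it, so the justification that ``the $0$-strata parametrize unit/counit moves that pass through $+$'' is not meaningful for this factorization homology. Moreover, a basic neighborhood of a $0$-stratum does not contribute trivially: by the lemma itself its value is the disk category of the punctured neighborhood (e.g.\ $\sO\times\sO$ for one interior $0$-stratum), which is far from contractible; and the structural results available in the $\beta$-version give \emph{limits} over closed covers, not the colimit decomposition your gluing argument presumes, so the recombination step does not assemble as described. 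What actually disposes of the $0$-strata in the paper is Proposition~\ref{t23}, the comparison $\int_M\fB A\simeq\int^\alpha_{M\smallsetminus\sk_0(M)}A$, whose proof is a finality statement for the functor $\bcD(M)\to\Disk^{\fr}_{1/M\smallsetminus\sk_0(M)}$ (via Observation~\ref{t20}, Lemma~2.3.2 of~\cite{pkd}, and Quillen's Theorem~A), not an excision computation around the $0$-strata. You would need to prove (or invoke) such a comparison before your ``trivialization step'' can go through.
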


\begin{proof}
Theorem~\ref{t7} gives equivalences $\un{\End}_{\Adj}(+)^{\op} \simeq \sO \simeq \un{\End}_{\Adj}(-)$ of monoidal $(\infty,1)$-categories.
Therefore, by taking opposites of categories, each of the equivalences in the statement of this lemma implies the other.
We are therefore reduced to establishing a canonical equivalence between $(\infty,1)$-categories: 
$\int_M  \fB \sO
\simeq
\Disk^{\fr}_{1/M\smallsetminus \sk_0(M)}
$.
Through Proposition~\ref{t23}, which identifies $\alpha$-factorization homology as an instance of $\beta$-factorization homology, it is sufficient to show that, for all (finitary) framed 1-manifolds $M'\in \Mfld_1^{\fr}$, there is a canonical equivalence between $(\infty,1)$-categories: 
\[
\int_{M'}^\alpha \sO
~\simeq~ 
\Disk^{\fr}_{1/M'}
~.
\]

Consider the functor
\begin{equation}\label{e43}
\Disk^{\fr}_{1/-}\colon \Mfld_1^{\fr}
\longrightarrow
\Cat_1
~,\qquad
M'
\mapsto 
\Disk^{\fr}_{1/M'}
~.
\end{equation}
The above equivalence is implied by establishing an equivalence between functors from $\Mfld_1^{\fr}$ to $\Cat_1$:
\[
\int_-^{\alpha} \sO
~\simeq~
\Disk^{\fr}_{1/-}
~.
\]
Lemma~\ref{t31}(1) gives that these two functors evaluate identically on $\RR^1$.

For each finite-fold disjoint union $\underset{j\in J} \bigsqcup M'_j$ of finitary framed 1-manifolds, the functor
\[
\Disk^{\fr}_{1/\underset{j\in J} \bigsqcup M'_j}
\xra{~\simeq~}
\prod_{j\in J} \Disk^{\fr}_{1/M'_j}
~,\qquad
\bigl(
U \hookrightarrow \underset{j\in J} \bigsqcup M'_j
\bigr)
\mapsto
\Bigl(
j\mapsto
\bigl(
U\cap M'_j \hookrightarrow M'_j
\bigr)
\Bigr)
~,
\]
is an equivalence.  
In this way, the functor~(\ref{e43}) extends as a symmetric monoidal functor, where the symmetric monoidal structure on the domain is disjoint union, and that on the codomain is product.  
The equivalence~(\ref{e44}) thereby extends as an equivalence between monoidal $(\infty,1)$-categories.  
Lemma~2.4.1
in~\cite{zp} verifies that this symmetric monoidal functor~(\ref{e43}) satisfies the $\ot$-excision condition.  
Theorem~1.2 of~\cite{oldfact} gives that the canonical natural transformation
\[
\int_{-}^{\alpha}
\sO
~
\underset{(\ref{e44})}{~\simeq~}
~
\int_{-}^{\alpha}
\Disk^{\fr}_{1/\RR^1}
\xra{~\simeq~}
\Disk^{\fr}_{1/-}
\]
is an equivalence.

\end{proof}

After Lemma~\ref{t11}, Lemma~\ref{t31} gives the following.
\begin{cor}\label{t30}
There are canonical equivalences between $(\infty,1)$-categories:
\[
\int_{\RR^1}^\alpha\un{\End}_{\Adj}(-)
\xra{~\simeq~}
\bDelta^{\tl}
\qquad\text{ and }\qquad
\int_{\RR^1}^\alpha \un{\End}_{\Adj}(+)
\xra{~\simeq~}
(\bdelta^{\op})^{\tr}
~.
\]
There are canonical equivalences of $\TT\simeq \sB\ZZ$-module $(\infty,1)$-categories:
\[
\int_{\SS^1}^\alpha\un{\End}_{\Adj}(-)
\xra{~\simeq~}
\para^{\tl}
\qquad\text{ and }\qquad
\int_{\SS^1}^\alpha \un{\End}_{\Adj}(+)
\xra{~\simeq~}
(\para^{\op})^{\tr}
~.
\\
\]
\end{cor}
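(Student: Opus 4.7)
The plan is to combine the key identification extracted from the proof of Lemma~\ref{t11}, namely the natural equivalence $\int^\alpha_{M'}\sO \simeq \Disk^{\fr}_{1/M'}$ of functors $\Mfld_1^{\fr} \to \Cat_1$, with the calculations of Observation~\ref{t31}, and to handle the $(+)$-cases by taking opposites.

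First I would apply Theorem~\ref{t7}, which provides equivalences of monoidal $(\infty,1)$-categories $\un{\End}_{\Adj}(-) \simeq \sO$ and $\un{\End}_{\Adj}(+)^{\op} \simeq \sO$. Applying $\alpha$-factorization homology to the first reduces the two $(-)$-statements of the corollary to computing $\int^\alpha_{\RR^1}\sO$ and $\int^\alpha_{\SS^1}\sO$. Both are handled by specializing the natural equivalence $\int^\alpha_{-}\sO \simeq \Disk^{\fr}_{1/-}$ established in the proof of Lemma~\ref{t11} via $\ot$-excision and the base case $M' = \RR^1$. Then Observation~\ref{t31}(1) identifies $\Disk^{\fr}_{1/\RR^1} \simeq (\bDelta)^{\tl}$ as $(\infty,1)$-categories, while Observation~\ref{t31}(2) identifies $\Disk^{\fr}_{1/\SS^1} \simeq \para^{\tl}$ as $\TT\simeq \sB\ZZ$-module-$(\infty,1)$-categories via the universal cover $\RR^1 \xra{\sf exp} \SS^1$ together with the deck-transformation action of $\pi_1(\SS^1) = \ZZ$. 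Concatenating these equivalences yields the two $(-)$-statements, with $\TT$-equivariance in the $\SS^1$-case automatic from the naturality of the identification $\int^\alpha_{-}\sO \simeq \Disk^{\fr}_{1/-}$ under the rotation action.

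Finally, I would derive the two $(+)$-statements by taking opposites of monoidal $(\infty,1)$-categories. Since $\alpha$-factorization homology commutes with the $(-)^{\op}$-involution on monoidal $(\infty,1)$-categories, the identification $\un{\End}_{\Adj}(+) \simeq \sO^{\op}$ gives $\int^\alpha_{M'}\un{\End}_{\Adj}(+) \simeq \bigl(\int^\alpha_{M'}\sO\bigr)^{\op}$; combining with the formal identities $(\bDelta^{\tl})^{\op} \simeq (\bdelta^{\op})^{\tr}$ and $(\para^{\tl})^{\op} \simeq (\para^{\op})^{\tr}$ yields the $(+)$-equivalences. The only bookkeeping item worth highlighting is tracking the $\TT$-action through the opposite in the $\SS^1$-case, which follows from compatibility of the rotation action on $\SS^1$ with the contravariant automorphism $\para^{\op} \simeq \para$ of Observation~\ref{t34}. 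Since all of the substantive work is absorbed into Lemma~\ref{t11} and Observation~\ref{t31}, no serious obstacle remains; the principal care is simply ensuring that equivariance data is transported coherently through each identification.
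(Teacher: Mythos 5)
Your proposal is correct and follows essentially the same route as the paper, whose entire proof is the remark that the corollary follows from Lemma~\ref{t11} (whose proof establishes the natural equivalence $\int^\alpha_{-}\sO\simeq\Disk^{\fr}_{1/-}$ and handles the $(+)$-case by the same opposite-taking argument, via Theorem~\ref{t7}) combined with the identifications of Observation~\ref{t31}. Your added remarks on transporting the $\TT\simeq\sB\ZZ$-equivariance through naturality in $\Mfld_1^{\fr}$ and through the $(-)^{\op}$-involution are exactly the implicit bookkeeping in the paper's one-line proof.
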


\subsection{Factorization homology of the walking adjunction}\label{sec.fact.adj}

Note the standard fully-faithful embedding
\[
\Cat_2
~\subset~
\fCat_1[\Cat_1]
~:=~
\Fun^{\sf Segal}( \bDelta^{\op} , \Cat_1 )
\]
into the $(\oo,1)$-category of those simplicial $(\infty,1)$-categories that carry (the opposites of) Segal covering diagrams to limit diagrams.  
Through this embedding, factorization homology (see~\S\ref{sec.fact.beta}) defines a functor
\[
\int\colon \Cat_2~\subset~\fCat_1[\Cat_1]
\longrightarrow
\Fun\bigl( \bcM , \Cat_1 \bigr)
~,\qquad
\cC
\mapsto 
\Bigl(
M\mapsto \int_M \cC
\Bigr)
~.
\]
By Lemma~\ref{t24}, for each $\cC \in \Cat_2$, the value of $\int \cC$ over $M\in \bcM$ is equivalent to the colimit in $\Cat_1$ of the functor:
\begin{equation}\label{e59}
(\ref{e38})\colon \bcD(M) \xra{~\rm forget~}\bcD \xra{~\Hom_{\Cat_2}\bigl (\fC(-) , \cC \bigr)~} \Cat_1
~.
\end{equation}

\begin{notation}\label{d18}
Let $\cC\in \Cat_2$ be an $(\infty,2)$-category.  
For each compact solidly 1-framed stratified space $M\in \bcM$, the coCartesian fibration
\begin{equation}\label{e39}
\int^{\lax}_M  \cC
\longrightarrow
\bcD(M)
\end{equation}
is the unstraightening of the functor~(\ref{e59}).  

\end{notation}

\begin{observation}\label{t46}
Proposition~\ref{t48} implies, for each $M\in \bcM$, that the $(\oo,1)$-category $\bcD(M)$ is an ordinary category: the spaces of morphisms between any two objects is a 0-type.
Consequently, if $\cC$ is ordinary 2-category, then $\int^{\lax}_M  \cC$ is an ordinary category.  
In particular, $\int^{\lax}_M  \Adj$, and any full $\infty$-subcategory of it, is an ordinary category. 
\end{observation}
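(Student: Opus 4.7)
The plan is to deduce the three assertions in sequence. First, I would invoke Proposition~\ref{t48} directly to conclude that for every $M \in \bcM$, the $\infty$-category $\bcD(M)$ is an ordinary $1$-category, in the sense that $\Map_{\bcD(M)}(D,E)$ is a $0$-type (a set) for every pair of objects $D,E$. This is exactly the first clause of Observation~\ref{t46}, and no further argument is required beyond citing the proposition.

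Next, I would analyze the coCartesian fibration $\int^{\lax}_M \cC \longrightarrow \bcD(M)$ of Notation~\ref{d18}, which by construction is the unstraightening of the functor
\[
\bcD(M) \xra{~(\ref{e59})~} \Cat_1, \qquad D \longmapsto \Hom_{\Cat_2}\bigl(\fC(D),\cC\bigr).
\]
A coCartesian fibration over a $1$-category whose fibers are all $1$-categories is itself a $1$-category: this follows because a morphism in the total space is determined by its image in the base together with a morphism in a fiber, and identifications between morphisms are likewise controlled by these two $0$-typical pieces of data. The base $\bcD(M)$ has already been established to be a $1$-category. Thus the key point is that each fiber is a $1$-category whenever $\cC$ is an ordinary $2$-category. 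But $\fC(D)$ is an ordinary (in fact free) $2$-category built from the stratified space $D$, and mapping an ordinary $2$-category into an ordinary $2$-category produces an ordinary $1$-category of $2$-functors and $2$-natural transformations; equivalently, $\Hom_{\Cat_2}(\fC(D),\cC)$ has $0$-truncated hom-spaces. This establishes the second clause.

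Finally, I would specialize to $\cC = \Adj$, which is an ordinary $2$-category by its definition in Appendix~\S\ref{sec.adjunction}, to conclude that $\int^{\lax}_M \Adj$ is an ordinary category. Passage to a full $\infty$-subcategory preserves the property that all mapping spaces are $0$-types, so the last clause of the observation follows immediately.

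The main thing to watch is that the fiberwise claim really does use only that $\cC$ is $1$-truncated on hom-$\infty$-categories (equivalently, that $\cC$ is an ordinary $2$-category), together with the fact that $\fC(D)$ is a finite-type $2$-category so that $\Hom_{\Cat_2}(\fC(D),-)$ preserves this truncation property. I do not expect any serious obstacle; the content is essentially bookkeeping of truncation levels, contingent only on Proposition~\ref{t48}.
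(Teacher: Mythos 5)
Your proposal is correct and follows essentially the same route the paper intends: cite Proposition~\ref{t48} for the base $\bcD(M)$, observe that the fibers $\Hom_{\Cat_2}\bigl(\fC(D)\wr\bullet,\cC\bigr)$ of the coCartesian fibration~(\ref{e39}) have $0$-truncated mapping spaces when $\cC$ is an ordinary $2$-category (using that $\fC(D)$ is free on a finite directed graph), and conclude by the fiber-sequence argument for mapping spaces, with full subcategories inheriting the property. The only small imprecision is that $\Adj$ is an ordinary $(2,2)$-category not by its defining universal property but by Theorem~\ref{t7} (Riehl--Verity, after Schanuel--Street), which is what the paper relies on.
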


\begin{remark}\label{r7}
We can explicitly describe the $(\infty,1)$-category introduced in Notation~\ref{d18}.  
Let $\cC\in \Cat_2$ be an $(\infty,2)$-category.  
Let $M\in \bcM$ be a compact solidly 1-framed stratified space.
\begin{itemize}
\item
An object in the $(\infty,1)$-category $\int^{\lax}_M  \cC$ is a pair $\bigl(D\xra{\sf ref} M , \fC(D)\xra{\ell} \cC \bigr)$ consisting of a disk-refinement of $M$, together with a functor to $\cC$ from the free $(\infty,1)$-category on the underlying finite directed graph $D$.  
(See Observation~\ref{t57} for a simple way to record such an $\ell$.)

\item
A morphism in the $(\infty,1)$-category $\int^{\lax}_M \cC$, from $(D_s,\ell_s)$ to $(D_t, \ell_t)$, is a morphism $D_s \xra{X} D_t$ in $\bcD(M)$, together with a 2-cell making the diagram among $(\infty,2)$-categories 
{\Small
\begin{equation}\label{e62}
\xymatrix{
\fC(D_s)
\ar[dr]_-{\ell_s}
&
\overset{\gamma}\Rightarrow
&
\fC(D_t) 
\ar[dl]^-{\ell_t}
\ar@(u,u)[ll]_-{\fC(X)}
\\
&
\cC
&
}
\end{equation}
}
commute.
Specifically, such a 2-cell is a morphism between $(\infty,2)$-categories $\fC(D_t) \wr c_1 \xra{\gamma} \cC$ extending $\fC(D_t) \wr \partial c_1 \xra{ \lag~ \ell_s \circ \fC(X)~  ,~ \ell_t~\rag } \cC$.

\item
Composition in $\int^{\lax}_M\cC$ is implemented by concatenating such diagrams~(\ref{e62}) horizontally, and composing morphisms in $\bcD(M)$ while composing such 2-cells.

\item
A morphism in the $(\infty,1)$-category $\int^{\lax}_M \cC$, depicted say as~(\ref{e62}), is coCartesian over $\bcD(M)$ if and only if the 2-cell $\gamma$ in the diagram is invertible, which is to say it is an identification $\ell_s \circ \fC(X) \simeq \ell_t$ between functors from $\fC(D_t)$ to $\cC$.
In particular, such a morphism is an equivalence if and only if the morphism $X$ in $\bcD(M)$ is an equivalence and the 2-cell $\gamma$ is invertible.

\end{itemize}

\end{remark}

\begin{observation}\label{t39}
For each $(\infty,2)$-category $\cC$, and each compact solidly 1-framed stratified space $M\in \bcM$, there is a canonical functor between $(\infty,1)$-categories
\[
\int^{\lax}_M \cC
\xra{~\rm localization~}
\colim\Bigl(
\bcD(M)
\xra{~(\ref{e59})~}
\Cat_1
\Bigr)
\underset{\rm Lem~\ref{t24}}{\xra{~\simeq~}}
\int_M \cC
~,
\]
which exhibits a localization on the coCartesian morphisms of~(\ref{e39}):
\[
\Bigl(
\int^{\lax}_M \cC 
\Bigr)
\Bigl[ {{\sf cCart}_{/\bcD(M)}}^{-1} \Bigr]
\xra{~\simeq~}
\int_M \cC
~.
\]

\end{observation}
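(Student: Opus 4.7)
The plan is to deduce Observation~\ref{t39} from a standard fact about $(\infty,1)$-categorical Grothendieck constructions: for any coCartesian fibration $p \colon \cE \to \cB$ between $(\infty,1)$-categories, classified by a functor $F \colon \cB \to \Cat_1$, the canonical comparison exhibits the colimit as a localization on the coCartesian morphisms,
\[
\cE\bigl[\, {\sf cCart}_{/\cB}^{-1} \,\bigr]
\xra{~\simeq~}
\colim_{\cB} F
~.
\]
Granting this, the assertion is immediate. First, by the very definition in Notation~\ref{d18}, the projection $\int^{\lax}_M \cC \to \bcD(M)$ is the unstraightening of~(\ref{e59}), hence a coCartesian fibration classified by that functor; the general fact then identifies its localization on coCartesian morphisms with the $\Cat_1$-colimit of~(\ref{e59}). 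Second, Lemma~\ref{t24} identifies that colimit with $\int_M \cC$, and composing with the localization yields the canonical functor displayed in the statement.

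The general fact itself I would verify via universal-property considerations. For any test $(\infty,1)$-category $\cD$, the $(\infty,1)$-category $\Fun\bigl(\cE[{\sf cCart}^{-1}], \cD\bigr)$ is by definition the full $\infty$-subcategory of $\Fun(\cE, \cD)$ spanned by the coCartesian-inverting functors. Via the unstraightening correspondence, a functor $\cE \to \cD$ is the same datum as a lax natural transformation $F \to \const_{\cD}$ from the classifying functor to the constant functor at $\cD$; the condition of inverting coCartesian morphisms corresponds exactly to the requirement that the lax structure maps be equivalences---that is, that the lax transformation is strict. Strict natural transformations $F \to \const_{\cD}$ are in turn corepresented by $\colim_{\cB} F \in \Cat_1$. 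Both sides of the displayed equivalence therefore corepresent the same $\Cat_1$-valued functor, and the resulting equivalence is realized by the canonical comparison sending an object $e \in \cE$ lying over $b \in \cB$ to its image in the $F(b)$-summand of the colimit.

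The main delicate point is the universal-property description of the unstraightening in the $(\infty,1)$-categorical setting---specifically, the identification of coCartesian-inverting functors with \emph{strict} (not merely lax) natural transformations into a constant diagram. This is, however, a well-documented general feature of the Grothendieck construction for coCartesian fibrations, and the argument above requires no input beyond invoking this standard correspondence together with Notation~\ref{d18} and Lemma~\ref{t24}.
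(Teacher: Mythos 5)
Your proposal is correct and coincides with the paper's (implicit) reasoning: the paper states this as an Observation precisely because it is an instance of the standard fact that localizing the unstraightening of a $\Cat_1$-valued functor at its coCartesian morphisms computes the colimit of that functor, combined with Notation~\ref{d18} and Lemma~\ref{t24}, exactly as you argue. Your universal-property sketch (coCartesian-inverting functors out of the fibration correspond to strict, rather than lax, cocones on the classifying diagram) is the standard justification of that fact and introduces no gap.
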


To prove Theorem~\ref{t10}, we analyze the coCartesian fibration $\int^{\lax}_{\SS^1} \Adj \to \bcD(\SS^1)$.  
Consider the diagram of $(\infty,2)$-categories and fully-faithful functors among them:
\begin{equation}\label{e42}
\ast
\overset{ \rm unit }{~\hookrightarrow~}
\fB \sO
\underset{\rm Thm~\ref{t7}}\simeq
\fB \un{\End}_{\Adj}(-)
~\hookrightarrow~
\Adj
~\hookleftarrow~
\fB \un{\End}_{\Adj}(+)
\underset{\rm Thm~\ref{t7}}\simeq
\fB \sO^{\op}
\overset{ \rm counit }{~\hookleftarrow~}
\ast
~.
\end{equation}
Applying $\int^{\lax}_{\SS^1}$ to this diagram~(\ref{e42}) results in a diagram of coCartesian fibrations over $\bcD(\SS^1)$ and fully-faithful functors among them each that preserves coCartesian morphisms over $\bcD(\SS^1)$:
\begin{equation}\label{e61}
\bcD(\SS^1)
\xra{ \int^{\lax}_{\SS^1} {\sf unit }}
\int^{\lax}_{\SS^1} \fB \un{\End}_{\Adj}(-)
~\hookrightarrow~
\int^{\lax}_{\SS^1} \Adj
~\hookleftarrow~
\int^{\lax}_{\SS^1} \fB \un{\End}_{\Adj}(+)
\xla{ \int^{\lax}_{\SS^1} {\sf counit }}
\bcD(\SS^1)
~.
\end{equation}

\begin{observation}\label{t41}
Let $(D\xra{\sf ref} \SS^1 , \fC(D)\xra{\ell} \Adj)\in \int^{\lax}_{\SS^1} \Adj$ be an object.
\begin{enumerate}
\item[$\bf +:$]
This object lies in $\int^{\lax}_{\SS^1} \fB \un{\End}_{\Adj}(+)$ if and only if the map between finite sets $\ell_0\colon \sk_0(D) = \Obj\bigl(\fC(D)\bigr) \xra{\Obj(\ell)} \Obj(\Adj)  = \{\pm\}$ is constant at $+$.

\item[$\bf -:$]
This object lies in $\int^{\lax}_{\SS^1} \fB \un{\End}_{\Adj}(-)$ if and only if the map between finite sets $\ell_0\colon \sk_0(D) = \Obj\bigl(\fC(D)\bigr) \xra{\Obj(\ell)} \Obj(\Adj)  = \{\pm\}$ is constant at $-$.

\item[$\bf unit:$]
This object lies in the image of the fully-faithful functor $\bigl( \int^{\lax}_{\SS^1} {\sf unit }\bigr) $ if and only if the functor $\fC(D) \xra{\ell} \Adj$ is constant at $-$.

\item[$\bf counit:$]
This object lies in the image of the fully-faithful functor $\bigl(\int^{\lax}_{\SS^1} {\sf counit }\bigr)$ if and only if the functor $\fC(D) \xra{\ell} \Adj$ is constant at $+$.

\end{enumerate}
In particular, the canonical functor $\int^{\lax}_{\SS^1} \fB \un{\End}_{\Adj}(-) \amalg \int^{\lax}_{\SS^1} \fB \un{\End}_{\Adj}(+) \longrightarrow \int^{\lax}_{\SS^1} \Adj$ is fully-faithful, which is to say the images of the two fully-faithful cofactors are disjoint.  

\end{observation}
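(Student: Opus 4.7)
The plan is to unpack Remark~\ref{r7}'s description of $\int^{\lax}_{\SS^1}(-)$ and apply functoriality to the fully-faithful 2-functors assembling into~(\ref{e42}). The guiding principle is: for any fully-faithful inclusion $\cC' \hookrightarrow \cC$ of $(\infty,2)$-categories, the induced functor $\int^{\lax}_{\SS^1} \cC' \to \int^{\lax}_{\SS^1} \cC$ is fully-faithful over $\bcD(\SS^1)$, and an object $(D, \ell)$ of the target lies in its image if and only if the 2-functor $\fC(D) \xra{\ell} \cC$ factors through $\cC'$. Full-faithfulness on morphism spaces is immediate from Remark~\ref{r7}: a morphism from $(D_s, \ell_s)$ to $(D_t, \ell_t)$ consists of a morphism $X$ in $\bcD(\SS^1)$ together with a 2-cell $\gamma$ in $\cC$ of shape~(\ref{e62}), and when $\ell_s$ and $\ell_t$ both factor through $\cC'$, the space of such $\gamma$ in $\cC'$ identifies with that in $\cC$ by fully-faithfulness of $\cC' \hookrightarrow \cC$.

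For the bullets $\mathbf{+}$ and $\mathbf{-}$: by Theorem~\ref{t7}, each $\fB\un{\End}_{\Adj}(\pm) \hookrightarrow \Adj$ is the full sub-2-category whose unique object is $\pm$. Since $\Obj(\fC(D)) = \sk_0(D)$, a 2-functor $\ell$ factors through this sub-2-category if and only if $\ell_0\colon \sk_0(D) \to \{\pm\}$ is constant at the respective value; the 1- and 2-cell conditions are automatic because the sub-2-category is full. For the bullets $\mathbf{unit}$ and $\mathbf{counit}$: the composite $\ast \xra{\rm unit} \fB\sO \simeq \fB\un{\End}_{\Adj}(-) \hookrightarrow \Adj$ selects the object $-$ together with only the identity 1- and 2-cells thereon, and dually for $\mathbf{counit}$. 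Thus $\ell$ factors through $\ast$ precisely when it is the constant functor at $-$ (respectively $+$)---not only must $\ell_0$ be constant, but every 1-morphism of $\fC(D)$ must land in $\id_-$ and every 2-cell in $\id_{\id_-}$---which is the content of the statement.

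For the concluding disjointness: the criteria of $\mathbf{+}$ and $\mathbf{-}$ show that an object $(D,\ell)$ lying in both images would require $\ell_0\colon \sk_0(D) \to \{\pm\}$ to be constant at both values, impossible provided $\sk_0(D) \neq \emptyset$. Because $\SS^1$ is compact and connected, any disk-refinement $D$ of $\SS^1$ must contain at least one 0-stratum (no union of open 1-disks alone decomposes $\SS^1$), so $\sk_0(D) \neq \emptyset$ and the images are indeed disjoint. Combined with the full-faithfulness of each cofactor from the first paragraph, this establishes the fully-faithfulness of $\int^{\lax}_{\SS^1} \fB\un{\End}_{\Adj}(-) \amalg \int^{\lax}_{\SS^1} \fB\un{\End}_{\Adj}(+) \to \int^{\lax}_{\SS^1} \Adj$. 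There is no serious obstacle in the argument---its content is definitional, and the main point requiring care is distinguishing factorization through a \emph{full} sub-2-category (which only constrains objects) from factorization through the point $\ast$ (which additionally trivializes all higher cells).
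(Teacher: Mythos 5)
Your unpacking is essentially the paper's own reading of this Observation (the paper offers no proof beyond the definitional description in Remark~\ref{r7} and Theorem~\ref{t7}), and your four bullet points are correct, including the care you take to distinguish factoring through the full sub-2-categories $\fB\un{\End}_{\Adj}(\pm)\subset \Adj$ (a condition on objects only) from factoring through $\ast$ (which forces all 1-strata to be labeled by identities), and your observation that $\sk_0(D)\neq\emptyset$ for any disk-refinement of $\SS^1$.

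The one place where your final inference is thinner than what you claim is the step ``disjoint images $+$ fully-faithful cofactors $\Rightarrow$ the functor from the coproduct is fully-faithful.'' That implication is false in general: full-faithfulness of the coproduct functor also requires that the mapping space in $\int^{\lax}_{\SS^1}\Adj$ from an object of one image to an object of the other be empty, in both directions, and disjointness alone does not give this. It does hold here, and by the same Remark~\ref{r7} analysis you are already using: a morphism $(D_s,\ell_s)\to(D_t,\ell_t)$ lying over $X$ is a 2-cell $\gamma$ as in~(\ref{e62}), which in particular forces $\ell_s\circ\fC(X)$ and $\ell_t$ to agree on objects (the space $\Obj(\Adj)=\{\pm\}$ is discrete, so the object-labeling cannot move along a morphism); since $\fC(X)$ carries objects to objects, an all-$-$ labeling of $D_s$ pushes forward to an all-$-$ labeling of $D_t$, which is incompatible with $\ell_t$ being all-$+$ because $\sk_0(D_t)\neq\emptyset$. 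This is exactly the content the paper records immediately afterward as Observation~\ref{t40} (the left-fibration property), and the paper's own gloss ``which is to say the images \ldots are disjoint'' elides the same point; so your proposal matches the paper's level of detail, but if you want the literal full-faithfulness claim you should add the sentence above about empty cross-mapping spaces.
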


The description of morphisms in $\int^{\lax}_{\SS^1} \cC$ given in Remark~\ref{r7} implies the following.
\begin{observation}\label{t40}
Each of the following fully-faithful functors in~(\ref{e61}), as well as the functor
\[
\int^{\lax}_{\SS^1} \fB \un{\End}_{\Adj}(+)
\amalg
\int^{\lax}_{\SS^1} \fB \un{\End}_{\Adj}(-)
~\hookrightarrow~
\int^{\lax}_{\SS^1} \Adj~,
\]
is a left fibration.
In other words, a morphism in $\int^{\lax}_{\SS^1} \Adj$ belongs to one of these full $\oo$-subcategories if and only if its source does.

\end{observation}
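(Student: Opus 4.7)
The plan is to verify the left fibration property of each of the five fully-faithful functors in question, and then to combine the two cases $\int^{\lax}_{\SS^1}\fB\un{\End}_{\Adj}(\pm)$ using disjointness. For a full-subcategory inclusion, being a left fibration is equivalent to the image being closed under taking targets of outgoing morphisms; I would establish this closure in each case.

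I would first handle the disjoint union. By Observation~\ref{t41}, the image of $\int^{\lax}_{\SS^1}\fB\un{\End}_{\Adj}(-)$ consists of objects $(D,\ell)$ with $\ell_0\colon\sk_0(D)\to\{\pm\}$ constant at $-$, and symmetrically for the $+$ case. Since $\sk_0(D)$ is nonempty whenever $D$ refines $\SS^1$, these two conditions are mutually exclusive, giving disjointness of the two cofactors' images. Assuming the left-fibration property for each cofactor, the claim for the disjoint-union functor then follows formally.

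For each of the five individual fully-faithful inclusions $\iota\colon\cA\hookrightarrow\int^{\lax}_{\SS^1}\Adj$, I would unpack a general morphism $(D_s,\ell_s)\to(D_t,\ell_t)$ in $\int^{\lax}_{\SS^1}\Adj$ via Remark~\ref{r7} as a pair $(X,\gamma)$ with $X\colon D_s\to D_t$ a morphism in $\bcD(\SS^1)$ and $\gamma\colon\fC(D_t)\wr c_1\to\Adj$ extending $\langle\ell_s\circ\fC(X),\ell_t\rangle$ on $\fC(D_t)\wr\partial c_1$. Suppose $\ell_s$ factors through the relevant full sub-$(\infty,2)$-category $\Adj'\hookrightarrow\Adj$ (either $\fB\un{\End}_{\Adj}(\pm)$, or $\ast$ for the unit/counit cases); then $\ell_s\circ\fC(X)$ also factors through $\Adj'$, so one boundary face of $\gamma$ already lands in $\Adj'$. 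The central step is to argue that $\gamma$ itself must factor through $\Adj'$: by the structure of the wreath product $\fC(D_t)\wr c_1$, the 1-morphism components of $\gamma$ (which are 1-morphisms in $\Adj$ sourced at an object of $\Adj'$) are compelled to lie in $\Adj'$, whence its other boundary face $\ell_t$ does too. For the $\fB\un{\End}_{\Adj}(\pm)$ cases, $\Adj'$ is the full sub-$(\infty,2)$-category on the single object $\pm$, so this forces $\ell_{t,0}\equiv\pm$, which by Observation~\ref{t41} places $(D_t,\ell_t)$ in the image. For the unit/counit cases, where $\Adj'\simeq\ast$, it additionally forces $\ell_t$ to send every cell of $\fC(D_t)$ to an identity.

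The main obstacle will be justifying the factorization of $\gamma$ through $\Adj'$, which pins down the behavior of the 1-morphism and higher-morphism components of $\gamma$ from that of its boundary data. This relies on the fine structure of the wreath product $\wr c_1$, together with the combinatorial nature of morphisms in $\bcD(\SS^1)$, each of which is a composition of the elementary moves of identifying two cyclically-adjacent 0-cells or introducing a new one. Once this factorization is established, the remaining verifications — that each $\iota$ is indeed fully-faithful, and that each resulting functor satisfies the coCartesian-lift existence condition with respect to $\bcD(\SS^1)$-morphisms — are immediate consequences of the localizing nature of $\iota$ on its image.
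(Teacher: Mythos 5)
Your overall route --- reduce ``left fibration'' for a fully-faithful inclusion to closure of the image under targets of outgoing morphisms, then feed in the description of morphisms from Remark~\ref{r7} and the membership criterion of Observation~\ref{t41} --- is exactly the paper's (its proof is the one-sentence appeal to Remark~\ref{r7}), and your handling of the coproduct via disjointness is fine. The gap is in your ``central step''. For the inclusions $\int^{\lax}_{\SS^1}\fB\un{\End}_{\Adj}(\pm)\hookrightarrow\int^{\lax}_{\SS^1}\Adj$ you argue that the 1-morphism components of $\gamma$, being 1-morphisms of $\Adj$ sourced at an object of $\Adj'$, are ``compelled to lie in $\Adj'$''. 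As a statement about $\Adj$ this is false: $\sR\colon +\to -$ and $\sL\colon -\to +$ are 1-morphisms sourced at objects of $\fB\un{\End}_{\Adj}(\pm)$ that do not lie there, so nothing about $\Adj$ compels such components inward, and you explicitly leave the needed ``factorization of $\gamma$'' as an open obstacle. The correct point --- which is what the citation of Remark~\ref{r7} encodes --- is that $\gamma$ has \emph{no} object components at all: it is a functor $\fC(D_t)\wr c_1\to\Adj$, and $\fC(D_t)\wr c_1$ has the same objects as $\fC(D_t)$, so its two boundary restrictions $\ell_s\circ\fC(X)$ and $\ell_t$ agree on objects, i.e.\ $\ell_{t,0}=\ell_{s,0}\circ\Obj\bigl(\fC(X)\bigr)$. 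Hence if $\ell_{s,0}$ is constant at $+$ (resp.\ $-$) so is $\ell_{t,0}$, and Observation~\ref{t41} places the target in the subcategory; since these subcategories are \emph{full} on an object of $\Adj$, no separate factorization of $\gamma$ is needed.

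Your uniform extension to the unit and counit functors is where the argument genuinely breaks, not merely remains unproven. There the relevant sub-2-category $\ast\subset\fB\un{\End}_{\Adj}(\pm)$ is not full, membership constrains the labels on 1-strata, and closure under outgoing morphisms is a statement about 2-cells of $\Adj$ that no object-level argument can see. Moreover the two cases are not symmetric: in $\un{\End}_{\Adj}(-)\simeq\sO$ there is a non-identity 2-cell out of $\id_-$ (the unit $\eta\colon\id_-\Rightarrow\sR\circ\sL$, i.e.\ $\emptyset\to\{\ast\}$ in $\sO$), whereas the only 2-cell of $\Adj$ out of $\id_+$ is the identity (no nonempty linear order maps to $\emptyset$). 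So with the orientation of the 2-cell fixed as in~(\ref{e62}), lax morphisms out of a constant-at-$\id_-$ object can change the 1-strata labels, and only the other of the two constant loci is closed under outgoing morphisms; whichever side one proves, the proof must invoke this asymmetric 2-cell structure of $\Adj$ rather than a blanket ``$\gamma$ factors, hence $\ell_t$ is constant at identities''. Finally, your closing remark about verifying ``coCartesian-lift existence with respect to $\bcD(\SS^1)$-morphisms'' is beside the point: for a fully-faithful inclusion the left-fibration property is exactly the cosieve condition, and no lifting over $\bcD(\SS^1)$ enters.
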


\begin{notation}\label{d19}
Denote the $(\infty,1)$-categories over $\bcD(\SS^1)$:
{\Small
\begin{eqnarray}
\nonumber
\cA~:=~ \int^{\lax}_{\SS^1} \Adj
&
,
&
\cA_+~:=~ \int^{\lax}_{\SS^1} \Adj \smallsetminus \int^{\lax}_{\SS^1} \fB \un{\End}_{\Adj}(-)
~,
\\
\nonumber
\cA_- ~:=~ \int^{\lax}_{\SS^1} \Adj \smallsetminus \int^{\lax}_{\SS^1} \fB \un{\End}_{\Adj}(+)
&
,
&
\cA_0~:= \int^{\lax}_{\SS^1} \Adj \smallsetminus \int^{\lax}_{\SS^1} \fB \un{\End}_{\Adj}(+) \amalg \int^{\lax}_{\SS^1} \fB \un{\End}_{\Adj}(-)
~.
\end{eqnarray}
}
Denote the respective $(\infty,1)$-subcategories, each that consists of the same objects and those morphisms that are carried into $\cA$ as coCartesian morphisms over $\bcD(\SS^1)$:
\[
{\cA_0}^{\sf cCart}~\subset \cA_0
~,\qquad
{\cA_-}^{\sf cCart}~\subset \cA_-
~,\qquad
{\cA_+}^{\sf cCart}~\subset \cA_+
~,\qquad
{\cA}^{\sf cCart}~\subset \cA
~.
\]

\end{notation}

\begin{remark}
Using Observation~\ref{t41}, note that none of the functors $\cA_0\ra \bcD(\SS^1)$ or $\cA_{\pm}\ra\bcD(\SS^1)$ are coCartesian fibrations.

\end{remark}

\begin{lemma}\label{t43}
Each of the diagrams among $(\infty,1)$-categories,
\[
\xymatrix{
\cA_0  \ar[rr] \ar[d]
&&
\cA_+ \ar[d]
&&
&&
{\cA_0}^{\sf cCart}  \ar[rr] \ar[d]
&&
{\cA_+}^{\sf cCart} \ar[d]
\\
\cA_- \ar[rr]
&&
\cA
&&
\text{ and }
&&
{\cA_0}^{\sf cCart}  \ar[rr] 
&&
{\cA_+}^{\sf cCart} 
,
}
\]
is a pushout.

\end{lemma}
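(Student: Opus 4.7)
My approach is to exploit the sieve/cosieve decomposition of $\cA$ encoded in Observations~\ref{t41} and~\ref{t40}. Write $\cB_\pm := \int^{\lax}_{\SS^1} \fB\un{\End}_{\Adj}(\pm) \subset \cA$ for the two full subcategories whose removals define $\cA_\mp$. Observation~\ref{t40} says that $\cB_+ \hookrightarrow \cA$, $\cB_- \hookrightarrow \cA$, and their coproduct inclusion into $\cA$, are all left fibrations; since these are fully faithful, this is exactly the statement that $\cB_+$ and $\cB_-$ are disjoint cosieves of $\cA$. Consequently $\cA_\pm = \cA \smallsetminus \cB_\mp$ and $\cA_0 = \cA_+ \cap \cA_-$ are sieves, $\Obj(\cA) = \Obj(\cB_+) \sqcup \Obj(\cA_0) \sqcup \Obj(\cB_-)$, and no morphism in $\cA$ leaves either $\cB_\pm$.

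The key combinatorial claim I would establish is that every simplex of the nerve $N(\cA)$ lies in $N(\cA_+)$ or in $N(\cA_-)$. Attach to each vertex its type $T \in \{\cA_0, \cB_+, \cB_-\}$: along any chain $x_0 \to \cdots \to x_n$, the cosieve property forces the sequence of types to remain in $\cA_0$ for some (possibly empty) initial segment and then, once it leaves, to stay in a single $\cB_\epsilon$. Thus at most one of $\cB_+, \cB_-$ appears in any given chain, so the simplex already lies in the corresponding $\cA_\epsilon$. By Observation~\ref{t46}, $\cA$ is an ordinary 1-category, so its nerve captures its full $\infty$-categorical structure, and the above yields the identity $N(\cA) = N(\cA_+) \cup N(\cA_-)$ of simplicial sets, with intersection $N(\cA_0)$. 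The inclusions $N(\cA_0) \hookrightarrow N(\cA_\pm)$ are monomorphisms, hence Joyal cofibrations, so this simplicial-set pushout is also a pushout in $\Cat_\infty$; this establishes the first square of the lemma.

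For the second square, I would invoke Remark~\ref{r7}: a morphism in $\cA$ is coCartesian over $\bcD(\SS^1)$ iff its associated $2$-cell $\gamma$ is invertible. This condition is closed under composition and contains the identities, so ${\cA}^{\sf cCart}, {\cA_\pm}^{\sf cCart}, {\cA_0}^{\sf cCart}$ are subcategories with the same objects as $\cA, \cA_\pm, \cA_0$. The vertex-type analysis from the previous paragraph depends only on objects, so it applies verbatim to the coCartesian subcategories: $N({\cA}^{\sf cCart}) = N({\cA_+}^{\sf cCart}) \cup N({\cA_-}^{\sf cCart})$ with intersection $N({\cA_0}^{\sf cCart})$, and the same Joyal-cofibration pushout argument delivers the second pushout. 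The main obstacle in the whole proof is really just verifying this vertex-type constraint -- that no simplex of $\cA$ genuinely requires a morphism crossing between $\cB_+$ and $\cB_-$ -- which reduces immediately to the cosieve property from Observation~\ref{t40}, with Observation~\ref{t46} ensuring no higher-categorical coherence subtleties arise.
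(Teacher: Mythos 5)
Your proof is correct, and its engine is the same as the paper's: the single substantive point in both arguments is that, by Observation~\ref{t41} (disjointness) and Observation~\ref{t40} (the left-fibration/cosieve property), any chain of composable morphisms in $\cA$ can meet at most one of the two full subcategories $\int^{\lax}_{\SS^1}\fB\un{\End}_{\Adj}(\pm)$, so every $[p]$-simplex factors through $\cA_+$ or through $\cA_-$ (and likewise after restricting to coCartesian morphisms, since those subcategories have the same objects). Where you differ is in how this combinatorial fact is converted into a pushout statement. The paper stays synthetic: it embeds $\Cat_1$ fully faithfully into $\PShv(\bDelta)$ via the restricted Yoneda functor, reduces to checking that each square of mapping spaces $\Hom_{\Cat_1}([p],-)$ is a pushout, observes that this square is a pullback of path-component inclusions (fullness of $\cA_\pm\subset\cA$, resp.\ fullness of the coCartesian subcategories among each other), and then uses the criterion that such a square of spaces is a pushout as soon as it is surjective on $\pi_0$ --- which is exactly the chain-factorization fact. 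You instead strictify: Observation~\ref{t46} lets you replace everything by nerves of ordinary categories, the chain analysis becomes the literal identity $N(\cA)=N(\cA_+)\cup N(\cA_-)$ with intersection $N(\cA_0)$ (and similarly for the coCartesian subcategories), and you upgrade this strict union to an $\infty$-categorical pushout using that monomorphisms are cofibrations in the Joyal model structure, whose objects are all cofibrant, so a strict pushout along a monomorphism is a homotopy pushout presenting the pushout in $\Cat_1$. Your route buys concreteness and makes the degenerate/identity cases transparent, at the price of invoking model-categorical input (left properness of Joyal) and a strictification step that the paper avoids; the paper's route is model-independent and also covers the coCartesian square "by the same logic" without needing $\cA$ to be an ordinary category at all. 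One housekeeping remark: the second square as printed in the lemma has a typo (its bottom row should read ${\cA_-}^{\sf cCart}\to{\cA}^{\sf cCart}$); the statement you proved is the intended one, which is what Corollary~\ref{cor.pushout.loc} requires.
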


\begin{proof}
We prove the left square is a pushout first.
Recall that the restricted Yoneda functor
\[
\Cat_1 \xra{\Hom_{\Cat_1}\bigl( [\bullet],-\bigr) }\PShv(\bDelta)
\]
is fully-faithful.
So the left square is a pushout in $\Cat_1$ if it is carried by this restricted Yoneda functor to a pushout square in $\PShv(\bDelta)$.
As colimits are detected and created value-wise in a presheaf $(\oo,1)$-category, it is therefore sufficient to show, for each object $[p]\in \bDelta$, that the square among spaces,
\[
\xymatrix{
\Hom_{\Cat_1}\bigl( [p] , \cA_0 \bigr)  \ar[rr] \ar[d]
&&
\Hom_{\Cat_1}\bigl( [p] , \cA_+ \bigr) 
\ar[d]
\\
\Hom_{\Cat_1}\bigl( [p] , \cA_- \bigr) 
\ar[rr]
&&
\Hom_{\Cat_1}\bigl( [p] , \cA \bigr) 
,
}
\]
is a pushout. 
By definition, each of the $(\oo,1)$-categories $\cA_{\pm}\subset \cA$ are full $\infty$-subcategories of $\cA$, and the $(\oo,1)$-category $\cA_0\simeq \cA_-\underset{\cA}\times \cA_+\subset \cA$ is their intersection in $\cA$, so is also a full $\infty$-subcategory of $\cA$.
Consequently, the above square of spaces is a pullback square among spaces in which each map is an inclusion of path-components.
Such a square of spaces is a pushout provided the induced map between sets of path-components
\begin{equation}\label{e1}
\pi_0\Bigl( 
\Hom_{\Cat_1}\bigl( [p] , \cA_- \bigr) 
\Bigr)
\coprod
\pi_0\Bigl( 
\Hom_{\Cat_1}\bigl( [p] , \cA_+ \bigr)
\Bigr)
\longrightarrow 
\pi_0\Bigl( 
\Hom_{\Cat_1}\bigl( [p] , \cA \bigr) 
\Bigr)
\end{equation}
is surjective. 
So let $[p]\xra{\sigma} \cA$ represent a point in the codomain of this map.
Inspecting the definitions of $\cA_{\pm}$, the induced map between sets of path-components
\[
\pi_0\Bigl( 
\Obj( \cA_- )
\Bigr)
\coprod
\pi_0\Bigl(
\Obj( \cA_+ )
\Bigr)
\longrightarrow 
\pi_0\Bigl( 
\Obj( \cA ) 
\Bigr)
\]
is surjective. 
So $\sigma(0)\in \cA$ belongs to either $\cA_-$ or $\cA_+$.
Observation~\ref{t40} implies $\sigma\in \Hom_{\Cat_1}\bigl( [p] , \cA\bigr)$ belongs to either $\Hom_{\Cat_1}\bigl( [p] , \cA_- \bigr)$ or $\Hom_{\Cat_1}\bigl( [p] , \cA_+ \bigr)$.
Therefore~(\ref{e1}) is surjective, which completes the proof that the left square in the lemma is a pushout.   

The proof that the right square in the lemma is a pushout follows the same logic.  
\end{proof}

Lemma~\ref{t43} has the following immediate consequence, toward our goal of computing $\int_{\SS^1}\Adj$.

\begin{cor}\label{cor.pushout.loc}
In the canonical diagram among $(\infty,1)$-categories,
\[
\xymatrix{
\cA_0\bigl[(\cA_0^{\sf cCart})^{-1}\bigr]
\ar[rr] 
\ar[d]
&&
\cA_+\bigl[(\cA_+^{\sf cCart})^{-1}\bigr]
\ar[d]
\\
\cA_-\bigl[(\cA_-^{\sf cCart})^{-1}\bigr]
\ar[rr]
&&
\cA\bigl[(\cA^{\sf cCart})^{-1}\bigr]
\ar[rr]^-{\simeq}_-{{\rm Obs}~\ref{t39}} 
&&
\displaystyle\int_{\SS^1} \Adj
~,
}
\]
the square is a pushout.
\end{cor}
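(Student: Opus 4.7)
The plan is to deduce the square's pushout property from the two pushout squares of Lemma~\ref{t43} by applying Dwyer--Kan localization and invoking Observation~\ref{t39}.

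Let $\mathsf{RelCat}$ denote the $\infty$-category whose objects are pairs $(\cC, W)$ consisting of an $\infty$-category $\cC$ together with a wide subcategory $W \subset \cC$ containing all equivalences. The Dwyer--Kan localization $(\cC, W) \mapsto \cC[W^{-1}]$ assembles into a functor $\mathsf{RelCat} \to \Cat_1$ that is a left adjoint to the fully-faithful inclusion $\Cat_1 \hookrightarrow \mathsf{RelCat}$ regarding an $\infty$-category as equipped with the subcategory of its equivalences. Being a left adjoint, this localization functor preserves colimits, and in particular pushouts.

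Next, the two squares of Lemma~\ref{t43}, taken together, exhibit the four pairs $(\cA_*, \cA_*^{\sf cCart})$ for $* \in \{0, -, +, \emptyset\}$ as a pushout square in $\mathsf{RelCat}$. This relies on the fact that pushouts of relative $\infty$-categories can be computed by separately taking pushouts of the underlying $\infty$-categories and of the subcategories of marked morphisms, provided that the resulting comparison arrow remains a wide subcategory inclusion. Wideness of this comparison arrow is automatic here, since each $\cA_*^{\sf cCart} \hookrightarrow \cA_*$ is an identity on objects and the pushout of identity-on-objects inclusions remains identity on objects.

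Applying the localization functor to this pushout in $\mathsf{RelCat}$ produces a pushout square in $\Cat_1$; its lower-right vertex is $\cA[(\cA^{\sf cCart})^{-1}]$, which Observation~\ref{t39} identifies with $\int_{\SS^1}\Adj$, completing the argument. The principal technical point requiring verification is the componentwise description of pushouts in $\mathsf{RelCat}$; once established, the remainder of the proof is formal.
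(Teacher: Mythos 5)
Your overall route is the intended one: the paper offers no separate argument for this corollary, treating it as an immediate consequence of Lemma~\ref{t43} via a colimit-preserving localization functor, which is exactly your strategy (and is the mechanism made explicit in the paper's proof of Corollary~\ref{cor.last}, where localization is implemented as the functor $\Ar(\Cat_1)\to\Cat_1$, $(\cW\to\cC)\mapsto \sB\cW\underset{\cW}\amalg\,\cC$).

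One step of your write-up is not justified as stated. You reduce the corollary to the claim that the square of pairs $(\cA_\ast,\cA_\ast^{\sf cCart})$ is a pushout in $\mathsf{RelCat}$, and you support the needed proviso --- that the componentwise pushout is again a wide subcategory inclusion --- only by noting that identity-on-objects behavior is preserved. That addresses wideness but not the subcategory condition: pushouts in $\Cat_1$ do not preserve monomorphisms or subcategory inclusions in general (gluing can create new morphisms and new identifications between previously distinct ones), so ``wide subcategory inclusion'' does not follow from ``identity on objects.'' Fortunately, no general preservation statement is needed here: regarding $\mathsf{RelCat}$ as the full subcategory of $\Ar(\Cat_1)$ spanned by wide subcategory inclusions containing the equivalences, the componentwise pushout of your span of arrows is, by the two pushout squares of Lemma~\ref{t43}, canonically identified with the arrow $\cA^{\sf cCart}\hookrightarrow\cA$, which is an object of $\mathsf{RelCat}$ by Notation~\ref{d19}; and a componentwise colimit that happens to lie in a full subcategory is the colimit there. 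Alternatively, you can avoid the side condition entirely by using the localization functor defined on all of $\Ar(\Cat_1)$ by $(\cW\to\cC)\mapsto \sB\cW\underset{\cW}\amalg\,\cC$: since colimits in $\Ar(\Cat_1)$ are computed componentwise and this formula preserves them, Lemma~\ref{t43} gives the pushout of localizations directly, and Observation~\ref{t39} then identifies the lower-right corner with $\int_{\SS^1}\Adj$ as you say.
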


The next result supplies means to identify the localizations in the pushout square of Corollary~\ref{cor.pushout.loc} above.

\begin{lemma}\label{t44}
Each of the fully-faithful inclusions over $\bcD(\SS^1)$ admits a left adjoint
\[
\xymatrix{
\cA_+
\ar@(u,-)@{-->}[rr]
&&
\displaystyle\int^{\lax}_{\SS^1} \fB \un{\End}_{\Adj}(+)
\ar[ll]^-{\rm inclusion}
&
\text{ and }
&
\cA_-
\ar@(u,-)@{-->}[rr]
&&
\displaystyle\int^{\lax}_{\SS^1} \fB \un{\End}_{\Adj}(-)
\ar[ll]^-{\rm inclusion}
~.
}
\]
These restrict as left adjoints
{\Small
\[
\xymatrix{
\cA_0
\ar@(u,-)@{-->}[rr]^-{}
&&
\displaystyle\int^{\lax}_{\SS^1} \fB \un{\End}_{\Adj}(+) \smallsetminus \bcD(\SS^1)
\ar[ll]^-{\rm inclusion}
&
\text{ and }
&
\cA_0
\ar@(u,-)@{-->}[rr]
&&
\displaystyle\int^{\lax}_{\SS^1} \fB \un{\End}_{\Adj}(-) \smallsetminus \bcD(\SS^1)
\ar[ll]^-{\rm inclusion}
~.
}
\]
}
Furthermore, the unit transformation for each of these adjunctions is by coCartesian morphisms over $\bcD(\SS^1)$.
Moreover, each of these left adjoints detects coCartesian morphisms over $\bcD(\SS^1)$.

\end{lemma}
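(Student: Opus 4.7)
The two adjunctions claimed in the lemma are related by the $\Adj \leftrightarrow \Adj^{\sf 1op}$ involution (which via Theorem~\ref{t7} swaps $\un{\End}_{\Adj}(+)^{\op} \simeq \un{\End}_{\Adj}(-)$) upon passing to $(\infty,1)$-opposite categories; it therefore suffices to construct the left adjoint to $\int^{\lax}_{\SS^1} \fB \un{\End}_{\Adj}(+) \hookrightarrow \cA_+$ and verify its claimed properties. The plan is to build the left adjoint by an explicit combinatorial collapse operation, then verify the universal property directly using the ordinary-categorical structure of $\int^{\lax}_{\SS^1}\Adj$ supplied by Observation~\ref{t46}.

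For the construction, given $(D \xra{\sf ref} \SS^1, \fC(D) \xra{\ell} \Adj) \in \cA_+$, the defining property of $\cA_+$ says the set $V^+ := \ell_0^{-1}(+) \subset \sk_0(D)$ is nonempty. Form a disk-refinement $D'$ of $\SS^1$ with $\sk_0(D') = V^+$ by deleting the $-$-vertices of $D$; this yields a canonical collapse $D \to D'$ in $\bcD(\SS^1)$. Define $\ell' : \fC(D') \to \fB \un{\End}_{\Adj}(+)$ on each edge of $D'$ by composing the $\ell$-labels along the corresponding path in $D$ through the deleted $-$-vertices (well-defined in $\un{\End}_{\Adj}(+)$ since each such path begins and ends at a $+$-vertex). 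The unit $(D, \ell) \to (D', \ell')$ is the pair $(D \to D', \gamma_{\rm id})$ with identity 2-cell, which is coCartesian over $\bcD(\SS^1)$ by Remark~\ref{r7}.

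For the universal property, given $(E, m) \in \int^{\lax}_{\SS^1} \fB \un{\End}_{\Adj}(+)$ and a morphism $(X, \gamma) : (D, \ell) \to (E, m)$ in $\cA_+$, the map $X$ in $\bcD(\SS^1)$ factors uniquely through $D \to D'$ because $\sk_0(E)$ has only $+$-vertices and no morphism of $\bcD(\SS^1)$ can invent a $-$-vertex; $\gamma$ then extends uniquely over the factor to a 2-cell exhibiting the required factorization. When $(D, \ell) \in \cA_0$, the labeling $\ell'$ has at least one edge labeled by a nontrivial composition $\cdots L \circ R \cdots$ through a $-$-vertex, so $\ell'$ lies outside the image of $\bcD(\SS^1) \hookrightarrow \int^{\lax}_{\SS^1} \fB \un{\End}_{\Adj}(+)$, establishing the restriction to $\cA_0$. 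For coCartesian detection, preservation is immediate since the units have identity 2-cells and identity 2-cells compose to identities; conversely, if $F(f)$ is coCartesian then the naturality square $\eta_2 \circ f = F(f) \circ \eta_1$ has three coCartesian sides (the two units together with $F(f)$), so by composition $\eta_2 \circ f$ is coCartesian, and the two-out-of-three property for coCartesian morphisms in the ambient coCartesian fibration $\cA \to \bcD(\SS^1)$ forces $f$ itself to be coCartesian.

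The main obstacle is the 2-cell analysis in the universal-property step: rigorously arguing that $\gamma$ factors uniquely through the identity unit 2-cell. The plan is to reduce, via Observation~\ref{t46} and Proposition~\ref{t48}, to an ordinary-categorical computation (since $\int^{\lax}_{\SS^1}\Adj$ is then an ordinary 1-category), after which the factorization is forced by the explicit presentation of $\Adj$ from Theorem~\ref{t7} — specifically, any 2-cell in $\fB \un{\End}_{\Adj}(+)$ between two composites whose source vertices include a $-$-vertex must be produced by the composition-through-$-$-vertices map in $\sO \simeq \un{\End}_{\Adj}(+)^{\op}$, and this is exactly what the collapse $D \to D'$ encodes.
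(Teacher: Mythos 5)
Your construction of the left adjoint matches the paper's: you form the collapse that deletes the $-$-vertices, label the collapsed edges by composing through the deleted vertices (equivalently, you take the coCartesian pushforward along the refinement $D\to D_+$ in $\bcD(\SS^1)$), note that the unit is coCartesian, verify initiality in the undercategory using that any morphism to an object of $\int^{\lax}_{\SS^1}\fB\un{\End}_{\Adj}(+)$ has vertex map landing in $\ell_0^{-1}(+)$ (the paper handles the general, non-refinement case here via the refinement-followed-by-creation factorization system on $\bcD(\SS^1)$, where you only gesture at ``cannot invent a $-$-vertex''), and observe that surjectivity of $\ell_0$ forces the collapsed labelling to be non-constant, giving the restriction to $\cA_0$. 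Up to this point your route is essentially the paper's, modulo your optional reduction of the $-$ case to the $+$ case by an op-involution, which would itself need a compatibility of $\int^{\lax}_{\SS^1}$ with taking 1-opposites (the paper instead just reruns the argument).

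The genuine gap is in the final claim, that the left adjoint \emph{detects} coCartesian morphisms. Your argument for the nontrivial direction is: the naturality square gives a composite with three coCartesian sides, hence the composite is coCartesian, and ``two-out-of-three'' forces $f$ coCartesian. But coCartesian morphisms in a coCartesian fibration do not satisfy this right-cancellation: from $g$ and $g\circ f$ coCartesian one cannot conclude $f$ is coCartesian (for instance, over $\Delta^1$ take $f$ a non-invertible fiberwise morphism that is inverted by the functor classifying the fibration and $g$ a coCartesian lift out of its target; then $g\circ f$ and $g$ are coCartesian while $f$ is not). The valid cancellation is the one you may use for the easy direction: $g\circ f$ and $f$ coCartesian imply $g$ coCartesian. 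Detection is therefore not formal; it uses a special feature of $\Adj$. The paper proves the contrapositive: if the 2-cell $\gamma$ of $(X,\gamma)$ is not invertible, first factor $(X,\gamma)$ through a coCartesian morphism to reduce to the case that $X$ lies over an equivalence in $\bcD(\SS^1)$, and then invoke the explicit description of $\Adj$ in Theorem~\ref{t7}, by which the composition functors $\un{\Hom}_{\Adj}(a,b)\times\un{\Hom}_{\Adj}(b,c)\xra{\circ}\un{\Hom}_{\Adj}(a,c)$ are conservative; conservativity is exactly what rules out that composing $\gamma$ with the edge-labels absorbed by the collapse could render it invertible, so $(X_+,\gamma_+)$ is again non-coCartesian. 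You would need to add this conservativity input (or an equivalent argument) to close the proof; as written, the detection step rests on a false general principle.
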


\begin{proof}
We focus on the existence of, and properties of, the lefthand adjoints. The righthand situation follows by the same logic.
We first show that the upper left adjoint exists.
For this, we must show, for each object $(D,\ell)\in \cA_+$, that the $\infty$-undercategory $\Bigl( \int^{\lax}_{\SS^1} \fB \un{\End}_{\Adj}(+) \Bigr)^{(D,\ell)/}$ has an initial object.

So fix $(D,\ell)\in \cA_+$. 
Recall that $D \xra{\sf ref} \SS^1$ is a disk-refinement onto the circle. This refinement is uniquely determined by its resulting non-empty finite subset $\sk_0(D) \subset \SS^1$.  The functor $\fC(D) \xra{\ell} \Adj$ is uniquely determined by its resulting maps among sets 
{\Small
\[
\ell_0\colon 
\sk_0(D) = \Obj\bigl(\fC(D)\bigr) \xra{ \Obj(\ell) } \Obj(\Adj) = \{\pm\}
\qquad\text{ and }\qquad
\ell_1\colon 
\pi_0\bigl( D \smallsetminus \sk_0(D) \bigr)
\hookrightarrow 
\Mor\bigl( \fC(D) \bigr)
\xra{\Mor(\ell)}
\Mor(\Adj)
\]
}
that are source-target compatible -- so long as $+$ is one of its values.
Consider the morphism $(D,\ell) \to (D_+,\ell_+)$ in $\cA_+$ that is the coCartesian lift over the refinement morphism $D\to D_+$ in $\bcD(\SS^1)$ that is characterized by the induced diagram among spaces of objects,
\[
\xymatrix{
\sk_0(D_+) \ar[rr]^-{\ell_+} \ar[d]
&&
\{+\} \ar[d]^-{\rm inclusion}
\\
\sk_0(D) \ar[rr]^-{\ell_0}
&&
\{\pm\}
,
}
\]
being a pullback.  
(The assumption that $+$ is a value of $\ell_0$ ensures that $\sk_0(D_+)\neq\emptyset$ is not empty, so $D_+$ is indeed a disk-refinement of $\SS^1$.)
Because the resulting map $(\ell_+)_0\colon \sk_0(D_+) \to \{\pm\}$ is constant at $+$, Observation~\ref{t41} grants that this object $(D_+,\ell_+)$ belongs to the image of the fully-faithful functor $\int^{\lax}_{\SS^1} \fB \un{\End}_{\Adj}(+) \hookrightarrow \cA_+$ which is the putative right adjoint.  
So $(D_+,\ell_+)$ canonically defines an object in the undercategory 
$\Bigl( \int^{\lax}_{\SS^1} \fB \un{\End}_{\Adj}(+) \Bigr)^{(D,\ell)/}$.  
We will show this object $(D_+,\ell_+)$ is initial.

So let $(D,\ell) \xra{(X,\gamma)}(D',\ell')$ be an object in $\Bigl( \int^{\lax}_{\SS^1} \fB \un{\End}_{\Adj}(+) \Bigr)^{(D,\ell)/}$.  
We will show there is a unique morphism $(D_+,\ell_+) \to (D',\ell')$ in this undercategory.  
Recall the identification $\sk_0\colon  \bcD(\SS^1)\xra{\simeq} \para^{\op}$ of Corollary~\ref{t21}.  
The ``surjective followed by injective'' factorization system on $\para$ determines a ``refinement followed by creation'' factorization system on $\bcD(\SS^1)$.  
We can thusly reduce to the case in which the morphism $(D,\ell) \xra{(X,\gamma)} (D',\ell')$ lies over a refinement morphism in $\bcD(\SS^1)$.
By assumption, the map $\ell'_0\colon \sk_0(D') \to \{\pm\}$ is constant at $+$.
So there is a commutative diagram among sets
\[
\xymatrix{
\sk_0(D') \ar[rr]^-{\ell'_0} \ar[d]_-{\sk_0(X)}
&&
\{+\} \ar[d]^-{\rm inclusion}
\\
\sk_0(D) \ar[rr]^-{\ell_0}
&&
\{\pm\}
}
\]
in which the left vertical map is an inclusion induced by the given refinement $D\to D'$.  
By construction of $D_+$, there is a unique factorization $D \to D_+ \to D'$ of the given refinement morphism $D\to D'$ in $\bcD(\SS^1)$.
Because the morphism $(D,\ell)\to (D_+,\ell_+)$ is defined to be coCartesian over $\bcD(\SS^1)$, then there exists a unique factorization $(D,\ell) \to (D_+,\ell_+) \to (D',\ell')$ of the given morphism $(D,\ell)\xra{(X,\gamma)} (D',\ell')$ in $\cA_+$.  
This completes the verification of the existence of the sought left adjoint in the upper left display of the lemma.
Notice that the unit of this adjunction, $(D,\ell)\to (D_+,\ell_+)$, is by coCartesian morphisms over $\bcD(\SS^1)$.

We now show that the left adjoint in the upper left of the display of the lemma restricts as a left adjoint in the bottom left of the display.  
For this note that an object $(D,\ell)\in \cA$ belongs to $\cA_0$ if and only if the corresponding map between finite sets $\ell_0\colon \sk_0(D) \to \{\pm\}$ is surjective.  
In particular, the functor $\fC(D) \xra{\ell} \Adj$ is not constant.  
Now, inspecting the value $(D_+,\ell_+) \in \int^{\lax}_{\SS^1}\fB \un{\End}_{\Adj}(+)$ of the left adjoint just constructed, should $(D,\ell)$ belong to $\cA_0$ then the functor $\ell_+ \colon \fC(D_+) \hookrightarrow \fC(D) \xra{\ell} \Adj$ is not constant.  
Therefore, this object $(D_+,\ell_+)\in \int^{\lax}_{\SS^1} \fB \un{\End}_{\Adj}(+)$ does not belong to $\bcD(\SS^1) \xra{\sf unit} \int^{\lax}_{\SS^1} \fB \un{\End}_{\Adj}(+)$.

It remains to prove the final statement of the lemma.
So let $(D,\ell) \xra{(X,\gamma)} (D',\ell')$ be a morphism in $\cA_+$.
The unit transformation for the adjunction constructed just above places this morphism in a commutative square in $\cA_+$:
\[
\xymatrix{
(D,\ell) \ar[rr]^-{(X,\gamma)}  \ar[d]_-{\rm unit}
&&
(D',\ell') \ar[d]^-{\rm unit}
\\
(D_+,\ell_+) \ar[rr]^-{(X_+,\gamma_+)}
&&
(D'_+,\ell'_+)
.
}
\]
We must show that the morphism $(X,\gamma)$ is coCartesian over $\bcD(\SS^1)$ if and only if the morphism $(X_+,\gamma_+)$ is coCartesian over $\bcD(\SS^1)$.
As noted above, the vertical unit morphisms are coCartesian over $\bcD(\SS^1)$.  
From a 2-of-3 property for coCartesian morphisms, if the morphism $(X,\gamma)$ is coCartesian over $\bcD(\SS^1)$, then so is the morphism $(X_+,\gamma_+)$.  
Now suppose the morphism $(X,\gamma)$ is \emph{not} coCartesian over $\bcD(\SS^1)$.
As in Remark~\ref{r7}, this is to say that the 2-cell $\gamma$ is \emph{not} invertible.  
Using that $\cA \to \bcD(\SS^1)$ is a coCartesian fibration, there is a unique factorization $(D,\ell) \to (D',\ell'') \to (D',\ell')$ in $\cA$ of the morphism $(X,\gamma)$ by a coCartesian morphism followed by a morphism over an equivalence in $\bcD(\SS^1)$.  
Because $\cA_+ \to \cA$ is a fully-faithful right fibration, this factorization lies in $\cA_+$.
So we can reduce to the case in which the given morphism $(D,\ell) \xra{(X,\gamma)} (D',\ell')$ lies over an equivalence $D\xra{\simeq} D'$ in $\bcD(\SS^1)$.  
Finally, the explicit description of $\Adj$ of Theorem~\ref{t7} reveals that the composition functor $\un{\Hom}_{\Adj}(a,b) \times \un{\Hom}_{\Adj}(b,c) \xra{\circ} \un{\Hom}_{\Adj}(a,c)$ is conservative.  
It follows that, if the 2-cell $\gamma$ of the morphism $(D,\ell) \xra{(X,\gamma)} (D',\ell')$ is not invertible, then neither is the 2-cell of the morphism $(D_+,\ell_+) \xra{(X_+,\gamma_+)} (D'_+,\ell'_+)$.  
As in Remark~\ref{r7}, we conclude that the morphism $(X_+,\gamma_+)$ is \emph{not} coCartesian over $\bcD(\SS^1)$.
\end{proof}

\begin{cor}\label{cor.last}
The adjunctions of Lemma~\ref{t44} determine canonical equivalences
\[
\xymatrix{
\cA_+\bigl[(\cA_+)^{-1}\bigr]
~\simeq~
\displaystyle\int_{\SS^1}\fB\un{\End}_{\Adj}(+)
&
{\text and}
&
\cA_-\bigl[(\cA_-)^{-1}\bigr]
~\simeq~
\displaystyle\int_{\SS^1}\fB\un{\End}_{\Adj}(-)}
\]
which restrict as equivalences
\[
\Bigl(\displaystyle\int_{\SS^1}\fB\un{\End}_{\Adj}(+)\Bigr)\smallsetminus\ast
~\simeq~
\cA_0\bigl[(\cA_0)^{-1}\bigr]
~\simeq~
\Bigl(\displaystyle\int_{\SS^1}\fB\un{\End}_{\Adj}(-)\Bigr)\smallsetminus\ast
\]
\end{cor}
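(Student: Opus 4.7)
The strategy rests on the standard principle that if $L\colon \cA \rightleftarrows \cB : R$ is an adjunction with $R$ fully faithful, then $L$ exhibits $\cB$ as the localization $\cA[W^{-1}]$, where $W$ is the class of morphisms inverted by $L$. I apply this principle to each of the reflective localizations supplied by Lemma~\ref{t44}, and then further localize at coCartesian morphisms to pass from $\int^{\lax}$ to $\int$ via Observation~\ref{t39}.

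The first step is to apply the principle to the adjunction $L_+\colon \cA_+ \rightleftarrows \int^{\lax}_{\SS^1}\fB\un{\End}_{\Adj}(+)\colon R_+$ of Lemma~\ref{t44}, yielding a canonical equivalence $\int^{\lax}_{\SS^1}\fB\un{\End}_{\Adj}(+) \simeq \cA_+[W_+^{-1}]$, where $W_+$ is the class of morphisms inverted by $L_+$. The final clause of Lemma~\ref{t44}, that $L_+$ detects coCartesian morphisms over $\bcD(\SS^1)$, has two consequences: first, since every equivalence is coCartesian, $W_+ \subseteq \cA_+^{\sf cCart}$; second, $L_+$ sends $\cA_+^{\sf cCart}$ precisely onto the coCartesian morphisms of $\int^{\lax}_{\SS^1}\fB\un{\End}_{\Adj}(+)$.

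The second step is to further localize both sides at coCartesian morphisms. Using that a localization at a class containing $W_+$ can be performed in two stages, this gives
\[
\cA_+\bigl[(\cA_+^{\sf cCart})^{-1}\bigr] \;\simeq\; \int^{\lax}_{\SS^1}\fB\un{\End}_{\Adj}(+)\bigl[({\sf cCart})^{-1}\bigr] \;\simeq\; \int_{\SS^1}\fB\un{\End}_{\Adj}(+),
\]
the final equivalence by Observation~\ref{t39}. The same argument, applied to the right-hand adjunction of Lemma~\ref{t44}, supplies the analogous equivalence for $\cA_-$.

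For the restricted equivalences involving $\cA_0$, the identical line of argument applies verbatim to the restricted adjunctions of Lemma~\ref{t44}, once one identifies the localization of $\int^{\lax}_{\SS^1}\fB\un{\End}_{\Adj}(\pm) \smallsetminus \bcD(\SS^1)$ at its coCartesian morphisms with $\bigl(\int_{\SS^1}\fB\un{\End}_{\Adj}(\pm)\bigr) \smallsetminus \ast$. This last identification is the only point that requires genuine thought, and I expect it to be the main obstacle: it follows from the observation that the unit inclusion $\bcD(\SS^1) \hookrightarrow \int^{\lax}_{\SS^1}\fB\un{\End}_{\Adj}(\pm)$ sends every morphism to a coCartesian morphism, so that upon localizing at coCartesian morphisms the image of $\bcD(\SS^1)$ collapses to its classifying space $\sB \bcD(\SS^1) \simeq \ast$ by Corollary~\ref{t22}. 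Together with the fact that $\cA_0$ is obtained from $\cA_\pm$ by removing the same fully faithful image of $\bcD(\SS^1)$ (via Observation~\ref{t41}), this delivers the punctured version of the equivalence and completes the proof.
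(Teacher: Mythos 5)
Your treatment of the two unrestricted equivalences is sound and amounts to a repackaging of the paper's own argument: where you invoke the reflective-localization principle together with a two-stage localization, the paper runs the adjunctions of Lemma~\ref{t44} through $\Ar(\Cat_1)$ and the localization functor $(\cW\to\cC)\mapsto \sB\cW\underset{\cW}\amalg\cC$, observing that the unit and counit become invertible because they are coCartesian. Both versions rest on exactly the same content of Lemma~\ref{t44}; the only caveat is that your claim that $L_+$ carries $\cA_+^{\sf cCart}$ onto the coCartesian morphisms uses \emph{preservation} of coCartesian morphisms, which follows from the proof of that lemma (and from $L_+R_+\simeq\id$ for surjectivity), not from the word ``detects'' by itself.

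The genuine gap is at the point you flag yourself: identifying $\bigl(\int^{\lax}_{\SS^1}\fB\un{\End}_{\Adj}(\pm)\smallsetminus\bcD(\SS^1)\bigr)$ localized at its coCartesian morphisms with $\bigl(\int_{\SS^1}\fB\un{\End}_{\Adj}(\pm)\bigr)\smallsetminus\ast$. Your justification---that every morphism in the image of $\bcD(\SS^1)$ is coCartesian, so that this image collapses to $\sB\bcD(\SS^1)\simeq\ast$, and $\cA_0$ is what remains after removing it---does not prove the claim. Localization does not in general commute with passing to the full subcategory on a complementary set of objects: inverting morphisms can create new morphisms between complement objects through zig-zags passing through the collapsed subcategory, and can even make a complement object equivalent to the collapsed point, in which case it does not lie in the complement of the localization at all. (Toy example: $\cC=[1]$ with $\cB=\{1\}$ and every morphism inverted; the localization of the complement $\{0\}$ is a point, while the complement of the localization is empty.) Note moreover that the two pieces are genuinely connected inside $\int^{\lax}_{\SS^1}\fB\un{\End}_{\Adj}(\pm)$: for instance counit $2$-cells supply morphisms between nontrivially labeled objects and trivially labeled ones in one direction, so an argument about crossing morphisms is unavoidable. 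The paper closes exactly this gap with Observation~\ref{t40}: the relevant inclusions are left fibrations, so morphisms cannot cross between $\bcD(\SS^1)$ and its complement in one of the two directions, and from this one-sided separation one deduces that the localization of the complement is the complement of the localization; only then does contractibility of $\bcD(\SS^1)\bigl[\bcD(\SS^1)^{-1}\bigr]\simeq\ast$ (Corollary~\ref{t22}), which you do invoke, finish the identification. Your proposal needs this separation input (or a substitute for it) to be complete.
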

\begin{proof}
We first establish the top left canonical equivalence; the top right canonical equivalence follows the same logic.
Lemma~\ref{t44} implies the adjunctions of that lemma extend as an adjunction in the $(\infty,2)$-category $\Ar(\Cat_1)$:
\begin{equation}\label{e2}
\xymatrix{
\Bigl(\cA_+^{\sf cCart}\ra\cA_+\Bigr)
\ar@(u,-)[rr]
&&
\Bigl(\Bigl(\displaystyle\int^{\lax}_{\SS^1} \fB \un{\End}_{\Adj}(+)\Bigr)^{\sf cCart}
\ra
\displaystyle\int^{\lax}_{\SS^1} \fB \un{\End}_{\Adj}(+)
\ar[ll]^-{\rm inclusion}}\Bigr)
~.
\end{equation}
The functor between $(\infty,2)$-categories, $\Ar(\Cat_1) \xra{(\cW \to \cC)\mapsto \sB \cW \underset{\cW} \amalg \cC} \Cat_1$, carries the above adjunction to an adjunction:
{\small
\begin{equation}\label{e3}
\xymatrix{
\cA_+\Bigl[(\cA_+^{\sf cCart})^{-1}\Bigr]
\ar@(u,-)[rr]
&&
\displaystyle\int^{\lax}_{\SS^1} \fB \un{\End}_{\Adj}(+)\Bigl[\Bigl(\Bigl(\displaystyle\int^{\lax}_{\SS^1} \fB \un{\End}_{\Adj}(+)\Bigr)^{\sf cCart}\Bigr)^{-1}\Bigr]
\underset{\rm Obs~\ref{t39}}{~\simeq~}
\int_{\SS^1}\fB\un{\End}_{\Adj}(+)~.
\ar[ll]}
\end{equation}
}
Lemma~\ref{t44} implies unit and counit 2-morphisms in the adjunction~(\ref{e2}) are by coCartesian morphisms over $\bcD(\SS^1)$.
It follows that the unit and counit 2-morphisms in the adjunction~(\ref{e3}) are by equivalences.
So the adjunction~(\ref{e3}) is an equivalence between $(\oo,1)$-categories.

We now show that the top left canonical equivalence restricts as the bottom left canonical equivalence; the bottom right canonical equivalence follows the same logic.
By the same logic above, there is a canonical equivalence between $(\oo,1)$-categories:
{\small
\[
\cA_0\Bigl[(\cA_0^{\sf cCart})^{-1}\Bigr]
~\simeq~
\Bigl(\displaystyle\int^{\lax}_{\SS^1} \fB \un{\End}_{\Adj}(+)\smallsetminus\bcD(\SS^1)\Bigr)\Bigl[\Bigl(\Bigl(\displaystyle\int^{\lax}_{\SS^1} \fB \un{\End}_{\Adj}(+)\Bigr)^{\sf cCart}\smallsetminus\bcD(\SS^1)\Bigr)^{-1}\Bigr]
~,
\]
}
where $\bcD(\SS^1)$ is identified as in~(\ref{e61}) with the $\oo$-subcategory of $\int^{\lax}_{\SS^1}\fB\un{\End}_{\Adj}(+)$ in which every 1-stratum is labeled by the identity morphisms of the object $+\in \Adj$ (see Observation~\ref{t41}).
Observation~\ref{t40} implies there are no morphisms from objects in the complement $\int^{\lax}_{\SS^1} \fB \un{\End}_{\Adj}(+)\smallsetminus\bcD(\SS^1)$ to objects in $\bcD(\SS^1)$. 
It follows that the localization of the complement is the complement of the localizations, which establishes the first equivalence between $(\oo,1)$-categories:
{\small
\begin{eqnarray}
\nonumber
\Bigl(\displaystyle\int^{\lax}_{\SS^1} \fB \un{\End}_{\Adj}(+)\smallsetminus\bcD(\SS^1)\Bigr)\Bigl[\Bigl(\Bigl(\displaystyle\int^{\lax}_{\SS^1} \fB \un{\End}_{\Adj}(+)\Bigr)^{\sf cCart}\smallsetminus\bcD(\SS^1)\Bigr)^{-1}\Bigr]
&
\simeq
&
\\
\nonumber
\displaystyle\int^{\lax}_{\SS^1} \fB \un{\End}_{\Adj}(+)\Bigl[\Bigl(\Bigl(\displaystyle\int^{\lax}_{\SS^1} \fB \un{\End}_{\Adj}(+)\Bigr)^{\sf cCart}\Bigr)^{-1}\Bigr]
\smallsetminus 
\bcD(\SS^1)\Bigl[\bcD(\SS^1)^{-1}\Bigr]
&
\xra{\simeq}
&
\int_{\SS^1}\fB\un{\End}_{\Adj}(+)\smallsetminus \ast
~.
\end{eqnarray}
}
The second equivalence is the canonical functor.
It is an equivalence by Observation~\ref{t39}, together with contractibility of the $\infty$-groupoid-completion $\bcD(\SS^1)\Bigl[\bcD(\SS^1)^{-1}\Bigr] \simeq \ast$  (see Corollary~\ref{t22}).
\end{proof}

\begin{proof}[Proof of Theorem~\ref{t10}]
We explain the following diagram among $(\infty,1)$-categories, which consolidates the preceding results of this section:
{\Small
\[
\xymatrix{
\para \ar[rr]^-{\rm inclusion}
&&
\para^{\tl} 
&&
\\
\Bigl(\int_{\SS^1} \fB \un{\End}_{\Adj}(-)\Bigr) \smallsetminus \ast
\ar[rr]^-{\rm inclusion}
\ar[dd]^-{\simeq}_{\rm Cor~\ref{cor.last}}
\ar[u]^-{\rm Cor~\ref{t30}}_-{\simeq}
&&
\int_{\SS^1} \fB \un{\End}_{\Adj}(-)
\ar[d]^-{\simeq}_{\rm Cor~\ref{cor.last}}
\ar[u]^-{\rm Cor~\ref{t30}}_-{\simeq}
&&
\\
&&
\cA_+\bigl[ ({\cA_+}^{\sf cCart})^{-1} \bigr]  
\ar[drr]
&&
\\
\cA_0\bigl[ ({\cA_0}^{\sf cCart})^{-1} \bigr]  
\ar[drr] \ar[urr]
&&
&&
\cA\bigl[(\cA^{\sf cCart})^{-1}\bigr]\simeq \int_{\SS^1} \Adj
\\
&&
\cA_-\bigl[ ({\cA_-}^{\sf cCart})^{-1} \bigr]  
\ar[urr]
&&
\\
\Bigl(\int_{\SS^1} \fB \un{\End}_{\Adj}(+)\Bigr) \smallsetminus \ast
\ar[rr]^-{\rm inclusion}
\ar[uu]_-{\simeq}^{\rm Cor~\ref{cor.last}}
\ar[d]_-{\rm Cor~\ref{t30}}^-{\simeq}
&&
\int_{\SS^1} \fB \un{\End}_{\Adj}(+)
\ar[u]_-{\simeq}^{\rm Cor~\ref{cor.last}}
\ar[d]_-{\rm Cor~\ref{t30}}^-{\simeq}
&&
\\
\para^{\op} \ar[rr]^-{\rm inclusion}
&&
(\para^{\op})^{\tr}
&&
.
}
\]
}
Corollary~\ref{cor.pushout.loc} states that the inner diamond is a pushout.
Corollary~\ref{t30} supplies the equivalences labeled as so, as well as commutativity of the squares in which they participate.  
Corollary~\ref{cor.last} supplies the equivalences labeled a so, as well as commutativity of the squares in which they participate. 
Through the left vertical equivalence $\para \simeq \para^{\op}$ given by combinatorial Poincar\'e duality (see Remark~\ref{rem.PD}), 
we conclude a canonical identification
\[
\para^{\tl} \underset{\para} \coprod \para^{\tr} 
\xra{~\simeq~}
\int_{\SS^1} \Adj
~.
\]
Finally, Lemma~\ref{t53} supplies an equivalence 
$\para^{\tl} \underset{\para} \coprod \para^{\tr} 
\xra{\simeq}
\para^{\tl\!\tr}$, 
which completes the proof.  
\\

\end{proof}

\section{Traces}\label{sec.traces}
Here we use Theorem~\ref{t10}, which identifies the factorization homology over the circle of the walking adjunction, to homotopy-coherently construct a $\TT$-invariant trace map from the Hochschild homology of an endomorphism algebras.  
This is articulated in Theorem~\ref{r9}, whose proof occupies~\S\ref{sec.traces.1}.
In \S\ref{sec.traces.2}, we use Theorem~\ref{r9} to prove a conjecture of To\"en--Vezzosi.

\subsection{$\TT$-invariant trace}\label{sec.traces.1}

\begin{convention}\label{d2}
In this subsection, $\bfX$ is a symmetric monoidal $(\infty,1)$-category.
\end{convention}

This section assembles results of the previous section to construct \bit{trace} morphisms. 
Using factorization homology, we obtain, for each dualizable object $V\in \bfX$, a $\sB\TT$-family of endomorphisms of its unit:
\[
\Bigl(
\TT
\racts
\sZ_V(\SS^1)
\Bigr)
~\in~
\End_{\bfX}(\uno)^{\pitchfork \sB \TT}
\qquad
\bigl( {\rm see~Remark}~\ref{r10}\bigr)
~.
\]
Such a value is expected from the 1-dimensional oriented cobordism hypothesis, which predicts a 1-dimensional oriented TQFT determined by $V$.  
Constructing the above value, with its $\TT$-equivariance, is the most subtle among all values of such a TQFT.
This section, which makes no reference to the symmetric monoidal $(\infty,1)$-category $\Bord$, and thusly it does not logically depend on the 1-dimensional cobordism hypothesis.

We begin by observing how, using factorization homology, an associative (co)algebra naturally determines a diagram in $\bfX$.
Using Proposition~5.1 of~\cite{oldfact} applied to $\bfX\in \CAlg(\Cat_1)$, the unique map between spaces $\SS^1 \xra{!} \ast$ determines a functor
\begin{equation}\label{e6}
\underset{\bfX} \bigotimes
\colon
\int^\alpha_{\SS^1} \bfX
\xra{~\int^\alpha_{!} \bfX~}
\int^\alpha_\ast  \bfX
~\simeq~
\bfX
~.
\end{equation}

\begin{observation}\label{t2}
\begin{enumerate}
\item[~]

\item
Let $A \in \Alg(\bfX)$ be an associative algebra in $\bfX$.
Through the identification $\Alg ( \bfX ) \simeq \Fun^{\cE_1} ( \sO , \bfX )$ of Proposition~\ref{t33}, $A$ is a monoidal functor $\sO \xra{\lag A\rag} \bfX$.  
Applying factorization homology over $\SS^1$, and concatenating with Lemma~\ref{t31}(2) and Lemma~\ref{t11}, results in a diagram in $\bfX$:
\begin{equation}\label{e65}
\para^{\tl}
\underset{\rm Lem~\ref{t31}(2)}{~\simeq~}
\Disk^{\fr}_{1/\SS^1}
\underset{\rm Lem~\ref{t11}}{~\simeq~}
\int^\alpha_{\SS^1} \sO
\xra{~\int^\alpha_{\SS^1} \lag A\rag~}
\int^\alpha_{\SS^1} \bfX
\underset{(\ref{e6})}{\xra{~\underset{\bfX}\bigotimes~}}\bfX
~.
\end{equation}

\item
Let $C\in {\sf cAlg}(\bfX)$ be an associative coalgebra in $\bfX$.  
Through the identification ${\sf cAlg}( \bfX ) \simeq \Fun^{\cE_1} ( \sO^{\op} , \bfX )$ of Proposition~\ref{t33}, $C$ is a monoidal functor $\sO^{\op} \xra{\lag C \rag} \bfX$.  
Applying factorization homology over $\SS^1$, and concatenating with Lemma~\ref{t31}(2) and Lemma~\ref{t11}, results in a diagram in $\bfX$:
\begin{equation}\label{e67}
(\para^{\op})^{\tr}
\underset{\rm Lem~\ref{t31}(2)}{~\simeq~}
\bigl(\Disk^{\fr}_{1/\SS^1}\bigr)^{\op}
\underset{\rm Lem~\ref{t11}}{~\simeq~}
\int^\alpha_{\SS^1} \sO^{\op}
\xra{~\int^\alpha_{\SS^1} \lag C\rag~}
\int^\alpha_{\SS^1} \bfX
\underset{(\ref{e6})}{\xra{~\underset{\bfX}\bigotimes~}}
\bfX
~.
\end{equation}

\end{enumerate}
\end{observation}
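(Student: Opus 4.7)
The plan is to construct both diagrams directly, by chaining together the previously established identifications and verifying at each step that the required monoidal (and, where relevant, $\TT$-equivariant) structure is preserved. The key inputs are the identification $\Alg(\bfX) \simeq \Fun^{\cE_1}(\sO,\bfX)$ of Observation~\ref{t33}, the functoriality of $\int^\alpha_{\SS^1}$ on monoidal functors between monoidal $(\infty,1)$-categories, the identifications of Observation~\ref{t31}(2) and Lemma~\ref{t11}, and the canonical functor $\underset{\bfX}\bigotimes$ of~(\ref{e6}).

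For part~(1), given $A\in \Alg(\bfX)$, I would first invoke Observation~\ref{t33} to present $A$ as a monoidal functor $\sO \xra{\lag A\rag} \bfX$. Then I would apply $\int^\alpha_{\SS^1}$ to obtain $\int^\alpha_{\SS^1} \sO \xra{\int^\alpha_{\SS^1}\lag A\rag} \int^\alpha_{\SS^1} \bfX$. For the source identification, I would combine the equivalence $\Disk^{\fr}_{1/\SS^1}\simeq \para^{\tl}$ of Observation~\ref{t31}(2) with the equivalence $\int^\alpha_{\SS^1}\sO \simeq \Disk^{\fr}_{1/\SS^1}$ established inside the proof of Lemma~\ref{t11}. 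Finally, I would postcompose with the functor $\underset{\bfX}\bigotimes\colon \int^\alpha_{\SS^1}\bfX \to \bfX$ of~(\ref{e6}). For part~(2), the same recipe applies with $\sO^{\op}$ in place of $\sO$: a coalgebra $C$ corresponds under Observation~\ref{t33} to a monoidal functor $\sO^{\op} \xra{\lag C\rag}\bfX$, factorization homology produces the middle arrow, and one now identifies the source via the opposite of Observation~\ref{t31}(2) and Lemma~\ref{t11}, yielding a diagram indexed by $(\para^{\op})^{\tr}$.

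The only mildly subtle point I expect is checking that the source-identification $\int^\alpha_{\SS^1}\sO \simeq \Disk^{\fr}_{1/\SS^1}$ is canonical enough to precompose with $\int^\alpha_{\SS^1}\lag A\rag$, i.e.\ that this identification is compatible with the monoidal structure used by factorization homology. This is precisely what the $\ot$-excision computation in the proof of Lemma~\ref{t11} provides (via Theorem~1.2 of~\cite{oldfact}); granting this, both diagrams arise by composing structured functors, and no further verification is required. Neither part needs any new input beyond the results cited in the statement, so the observation is essentially a record of the concatenation performed in~(\ref{e65}) and~(\ref{e67}).
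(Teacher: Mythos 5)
Your proposal is correct and matches the paper's treatment: Observation~\ref{t2} is itself just the concatenation of Observation~\ref{t33}, functoriality of $\int^\alpha_{\SS^1}$ on monoidal functors, the identifications of Observation~\ref{t31}(2) and Lemma~\ref{t11} (whose proof, as you note, supplies the monoidal/$\ot$-excision compatibility of $\int^\alpha_{\SS^1}\sO\simeq\Disk^{\fr}_{1/\SS^1}$, and which handles the $\sO^{\op}$ case by passing to opposites), and postcomposition with~(\ref{e6}). No further argument is needed beyond what you give.
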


\begin{observation}\label{t49}
Consider the set-up of Observation~\ref{t2}.
\begin{enumerate}
\item
This diagram~(\ref{e65}) in $\bfX$ is equivalent to that computing the $\alpha$-version of factorization homology over $\SS^1$, in the sense of Definition~3.2 of~\cite{oldfact}: 
\begin{equation}\label{e66}
\int^\alpha_{\SS^1} A
~\simeq~
\colim\Bigl(
\para^{\tl}
\underset{\rm Lem~\ref{t31}(2)}{~\simeq~}
\Disk^{\fr}_{1/\SS^1}
\underset{\rm Lem~\ref{t11}}{~\simeq~}
\int^\alpha_{\SS^1} \sO
\xra{~\int^\alpha_{\SS^1} A~}
\int^\alpha_{\SS^1} \bfX
\underset{(\ref{e6})}{\xra{~\underset{\bfX}\bigotimes~}}
\bfX
\Bigr)
~.
\end{equation}

\item
This diagram~(\ref{e67}) in $\bfX$ is equivalent to that computing the $\alpha$-version of factorization cohomology over $\SS^1$, in the sense of Definition~2.2.1 of~\cite{zp}: 
\begin{equation}\label{e68}
\int^{\SS^1}_\alpha C
~\simeq~
\limit\Bigl(
(\para^{\op})^{\tr}
\underset{\rm Lem~\ref{t31}(2)}{~\simeq~}
\bigl(\Disk^{\fr}_{1/\SS^1}\bigr)^{\op}
\underset{\rm Lem~\ref{t11}}{~\simeq~}
\int^\alpha_{\SS^1} \sO^{\op}
\xra{~\int^\alpha_{\SS^1} C~}
\int^\alpha_{\SS^1} \bfX
\underset{(\ref{e6})}{\xra{~\underset{\bfX}\bigotimes~}}
\bfX
\Bigr)
~.
\end{equation}

\end{enumerate}

\end{observation}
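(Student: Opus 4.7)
The plan is to unwind the definition of $\alpha$-factorization homology on the one hand, and of the composite diagram in~(\ref{e65}) on the other, and match them on the nose. Recall that $\int^\alpha_{\SS^1} A$ is defined (per Definition~3.2 of~\cite{oldfact}) as the colimit
\[
\int^\alpha_{\SS^1} A ~\simeq~ \colim\Bigl( \Disk^{\fr}_{1/\SS^1} \xra{\rm forget} \Disk^{\fr}_1 \xra{A} \bfX \Bigr)
~,
\]
where $A \in \Alg(\bfX) \simeq \Fun^{\otimes}(\Disk^{\fr}_1,\bfX)$ is evaluated as $(U \hookrightarrow \RR^1) \mapsto A^{\otimes \pi_0(U)}$. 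So it suffices to exhibit an identification, in $\Fun\bigl(\Disk^{\fr}_{1/\SS^1}, \bfX\bigr)$, between this functor and the composite appearing in~(\ref{e65}).

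First, I would record, using Lemma~\ref{t11} together with the symmetric monoidal refinement of~(\ref{e43}) verified in the proof of that lemma, that the equivalence $\int^\alpha_{\SS^1}\sO \simeq \Disk^{\fr}_{1/\SS^1}$ underlies an equivalence between monoidal $(\infty,1)$-categories compatible with the tautological equivalence $\int^\alpha_{\RR^1} \sO \simeq \Disk^{\fr}_{1/\RR^1} \simeq \sO$ of~(\ref{e44}). Combined with the evident identification of the monoidal functor $\lag A \rag\colon \sO \to \bfX$ with the symmetric monoidal functor $\Disk^{\fr}_1 \xra{A} \bfX$ (via Observation~\ref{t33}), this shows that, under the equivalence $\Disk^{\fr}_{1/\SS^1}\simeq \int^\alpha_{\SS^1}\sO$, the functor $\int^\alpha_{\SS^1}\lag A \rag$ sends $(U\hookrightarrow \SS^1)$ to an object in $\int^\alpha_{\SS^1}\bfX$ indexed by $\pi_0(U)$ copies of $A$. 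Applying the tensor functor $\underset{\bfX}\bigotimes$ from~(\ref{e6}) then produces precisely $A^{\otimes \pi_0(U)} \in \bfX$. This identifies the two functors on objects; functoriality in the $\Disk^{\fr}_{1/\SS^1}$-morphisms again follows from the compatibility of these equivalences with the monoidal structures, using that~(\ref{e6}) is constructed via pushforward along $\SS^1 \xra{!} \ast$ as in Proposition~5.1 of~\cite{oldfact}.

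For part~(2), the same argument applies to the coalgebra situation, with three changes: use the opposite identification $\int^\alpha_{\SS^1}\sO^{\op}\simeq (\Disk^{\fr}_{1/\SS^1})^{\op}$ from Lemma~\ref{t11}; use that $\alpha$-factorization cohomology of a coalgebra is defined, per Definition~2.2.1 of~\cite{zp}, as the \emph{limit} over $(\Disk^{\fr}_{1/\SS^1})^{\op}$ of the analogous functor $C^{\otimes \pi_0(-)}$; and replace colimit with limit throughout. The main potential obstacle is checking that the equivalences $\int^\alpha_{\SS^1}\sO\simeq \Disk^{\fr}_{1/\SS^1}$ and $\int^\alpha_{\SS^1}\sO^{\op}\simeq (\Disk^{\fr}_{1/\SS^1})^{\op}$ intertwine the monoidal structure with the symmetric monoidal structure on $\bfX$ in a way that respects the tensor-pushforward $\underset{\bfX}\bigotimes$; but this compatibility is a direct consequence of the symmetric monoidal nature of the functor~(\ref{e43}) already exploited in the proof of Lemma~\ref{t11}, so no new technical input is required.
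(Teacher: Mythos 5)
Your proposal is correct and is essentially the paper's own (implicit) justification: the statement is an Observation left unproved precisely because, via the equivalence of Lemma~\ref{t11} --- which is natural in the framed $1$-manifold and restricts to~(\ref{e44}) over $\RR^1$ --- the composite~(\ref{e65}) unwinds to the canonical $\Disk^{\fr}_{1/\SS^1}$-indexed diagram $U \mapsto A^{\otimes \pi_0(U)}$ whose colimit is Definition~3.2 of~\cite{oldfact}, and dually for~(\ref{e67}) and the limit of~\cite{zp}. Your added care about the symmetric monoidal refinement of~(\ref{e43}) handling functoriality on morphisms is exactly the right point to flag, and no further input is needed.
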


\begin{prop}\label{r8}
Consider the set-up of Observation~\ref{t2}.
\begin{enumerate}

\item
Provided the $(\oo,1)$-category $\bfX$ admits geometric realizations,
the Hochschild homology ${\sf HH}(A) := \int^\alpha_{\SS^1} A\in \bfX$ exists.
Furthermore, the unit of $A$ canonically determines a $\TT$-invariant morphism in $\bfX$:
\[
\uno
\xra{~{\sf unit}_A~}
{\sf HH}(A)
~.
\]

\item
Provided the $(\oo,1)$-category $\bfX$ admits totalizations,
the coHochschild homology ${\sf cHH}(C) := \int_\alpha^{\SS^1} C \in \bfX$ exists.
Furthermore, the counit of $C$ canonically determines a $\TT$-invariant morphism in $\bfX$:
\[
{\sf cHH}(C)
\xra{~{\sf counit}_C~}
\uno
~.
\]

\end{enumerate}

\end{prop}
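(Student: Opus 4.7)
The plan is to deduce both parts from Observation~\ref{t49}, which identifies $\int^\alpha_{\SS^1}A$ (respectively $\int^{\SS^1}_\alpha C$) with the colimit over $\para^{\tl}$ of the diagram~(\ref{e65}) (respectively the limit over $(\para^{\op})^{\tr}$ of~(\ref{e67})). Parts (1) and (2) are formally dual, so I concentrate on (1); the argument for (2) proceeds mutatis mutandis by interchanging colimits with limits, $\tl$ with $\tr$, initial with final, and geometric realizations with totalizations.

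First I would show existence of the colimit. By Observation~\ref{t49}(1), we have $\int^\alpha_{\SS^1}A\simeq\colim_{\para^{\tl}}\Phi$, with $\Phi$ the composite~(\ref{e65}). Since the cone point $-\infty\in\para^{\tl}$ is initial, cocones over $\para^{\tl}$ are determined by their restriction to $\para$, so $\colim_{\para^{\tl}}\Phi\simeq\colim_\para\Phi|_\para$. Next I would reduce the $\para$-colimit to a geometric realization: an appropriate forgetful-type functor $\bDelta^{\op}\to\para$, induced by the relationship between $\para$ and the cyclic category of Connes, is cofinal. Consequently $\colim_\para\Phi|_\para$ is canonically a geometric realization in $\bfX$ -- upon tracing the identifications, the underlying simplicial diagram is the classical cyclic bar construction $[n]\mapsto A^{\ot(n+1)}$, whose realization is ordinary Hochschild homology. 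Hence the colimit exists whenever $\bfX$ admits geometric realizations.

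Next I would establish $\TT$-invariance of the unit morphism. The $\TT\simeq\sB\ZZ$-action on ${\sf HH}(A)$ is induced by the $\TT$-action on $\para^{\tl}$, which under the equivalence~(\ref{e45}) of Observation~\ref{t31}(2) is the rotation action on $\SS^1$. The cone point $-\infty$ corresponds to the empty inclusion $\emptyset\hookrightarrow\SS^1$, is manifestly rotation-fixed, and is carried by~(\ref{e65}) to the empty tensor product $\uno\in\bfX$. The structure morphism $\uno=\Phi(-\infty)\to\colim_{\para^{\tl}}\Phi={\sf HH}(A)$ is therefore a colimit edge emanating from a $\TT$-fixed object of the indexing diagram; since $\TT$ acts trivially on $\Phi(-\infty)=\uno$, this morphism is canonically $\TT$-invariant. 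Under the identification of the colimit with the Hochschild geometric realization, the composition $\uno\to\Phi(-\infty)\to\Phi([0])=A\to{\sf HH}(A)$ is precisely the classical unit of $A$ followed by the $0$-simplex inclusion, justifying the description in the statement.

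The main obstacle is the cofinality statement reducing $\colim_\para$ to a geometric realization (and dually $\lim_{\para^{\op}}$ to a totalization). While expected on general grounds from the presentation of $\para$ as the universal cover of Connes's cyclic category, carefully identifying the relevant cofinal simplicial subcategory and verifying cofinality is the substantive input. Once this is in place, both existence and canonical $\TT$-equivariance of the unit and counit follow formally from the two observations above: that initial/final cone points contribute nothing to colimits/limits, and that morphisms out of (resp.\ into) $\TT$-fixed points of an indexing diagram are automatically $\TT$-equivariant with trivial action on the source (resp.\ target).
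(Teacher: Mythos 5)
Your overall route is the same as the paper's: identify ${\sf HH}(A)$ with the colimit of the diagram~(\ref{e65}) via Observation~\ref{t49}, reduce that colimit to a geometric realization by a finality argument involving $\bDelta^{\op}$ and $\para$, read off the unit map from the cone-point (which is carried to $\uno$), and get $\TT$-invariance from the fact that the cone-point is $\sB\ZZ$-fixed; statement~(2) is handled by duality. Two points, however, need repair.

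First, your justification for replacing $\colim_{\para^{\tl}}$ by $\colim_{\para}$ is incorrect as stated. It is not true that freely adjoining an initial object to an indexing $\infty$-category leaves colimits unchanged ``because cocones are determined by their restriction'': for instance, if $\cK$ is the discrete category on two objects, then $\colim_{\cK^{\tl}}\Phi$ is a pushout over $\Phi(-\infty)$, not the coproduct $\colim_{\cK}\Phi|_{\cK}$. The inclusion $\cK\hookrightarrow\cK^{\tl}$ is final precisely when the classifying space $\sB\cK$ is contractible, so in the case at hand the reduction is valid only because $\sB\para\simeq\ast$ (Corollary~\ref{s2}), via Quillen's Theorem~A. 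This is exactly how the paper argues: Observation~\ref{t3} asserts finality of the composite $\bDelta^{\op}\to\para^{\op}\simeq\para\hookrightarrow\para^{\tl}$, with finality of the last inclusion deduced from contractibility of $\sB\para$.

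Second, the step you defer as ``the main obstacle'' --- cofinality of a functor $\bDelta^{\op}\to\para$ --- is not something that needs to be re-established here: it is Proposition~\ref{t74} (Proposition~4.2.8 of~\cite{lurie.rotation}), stating that $[p]\mapsto[p]^{\bigstar\ZZ}$ is initial as a functor $\bDelta\to\para$, combined with the duality $\para^{\op}\simeq\para$ of Observation~\ref{t34}; the paper packages this together with the cone-point issue as Observation~\ref{t3} and then concludes exactly as you do. With your second paragraph replaced by a citation of Observation~\ref{t3} (or of Proposition~\ref{t74} plus Corollary~\ref{s2}), your argument, including the treatment of the unit and its $\TT$-invariance, coincides with the paper's proof.
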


\begin{proof}
The two statements imply one another, by replacing $\bfX$ with $\bfX^{\op}$.
So it is sufficient to prove the first statement.  
Observation~\ref{t3} supplies a final functor $\bDelta^{\op} \to \para^{\tl}$.
Therefore, the assumption that $\bfX$ admits geometric realizations ensures the colimit of~(\ref{e65}) exists.  
Observation~\ref{t49} implies this colimit of~(\ref{e65}) is the Hochschild homology of $A$.
Inspecting the construction of the functor~(\ref{e65}) reveals that the cone-point is carried to the symmetric monoidal unit in $\bfX$, and canonical morphism from this value to the colimit is implemented by the unit of $A$:
\[
\uno
\xra{~{\sf unit}_A~}
\colim\bigl( \para^{\tl} \to \bfX \bigr)
\underset{\rm Obs~\ref{t49}}{~\simeq~}
\displaystyle\int^\alpha_{\SS^1} \bfX
~=:~
{\sf HH}(A)
~.
\]
Because the cone-point in $\para^{\tl}$ is $\sB\ZZ$-invariant, this morphism is $\TT\simeq \sB\ZZ$-invariant.

\end{proof}

Now, let $V\in \Obj(\bfX^{\sf duals})$ be an object that is dualizable.  
Denote its dual as $V^\vee\in \Obj(\bfX^{\sf duals})$.  
With respect to the right-lax monoidal functor $\Hom_{\bfX}(\uno , -) \colon \bfX \to \Spaces$, the
the monoid of endomorphisms $\End_{\bfX}(V)$ canonically lifts as an associative algebra in $\bfX$;
with respect to the right-lax monoidal functor $\Hom_{\bfX}(- , \uno) \colon \bfX^{\op} \to \Spaces$, the
the monoid of endomorphisms $\End_{\bfX}(V)$ canonically lifts as an associative coalgebra in $\bfX$:
\begin{equation}\label{e4}
\un{\End}(V):=V^\vee \ot  V
~\in~
\Alg(\bfX)
\qquad\text{ and }\qquad
\un{\End}(V):= V\ot V^\vee
~\in~
{\sf cAlg}(\bfX)
~.
\end{equation}
This associative algebra structure and associative coalgebra structure are intertwined: they are the associated monad and comonad of an adjunction\footnote{This implies that $\un{\End}(V)$ has the structure of a Frobenius algebra.}
in $\fB \bfX$, as we now explain.

Recall the identification $\Hom_{\fCat_2}\bigl( \Adj , \fB \bfX \bigr) \simeq \Obj(\bfX^{\sf duals})$ of Observation~\ref{t15}.
Through this identification, $V$ is the datum of a functor between category-objects internal to $(\infty,1)$-categories: $\Adj \xra{\lag V\rag} \fB \bfX$.  
The resulting composite functors
\[
\lag \un{\End}(V) \rag 
\colon 
\fB \sO \underset{\rm Thm~\ref{t7}}{~\hookrightarrow~}
\Adj
\xra{~\lag V \rag~}
\fB \bfX
\qquad\text{ and }\qquad
\lag \un{\End}(V) \rag 
\colon
\fB \sO^{\op} \underset{\rm Thm~\ref{t7}}{~\hookrightarrow~}
\Adj
\xra{~\lag V \rag~}
\fB \bfX
\]
select the associative algebra $\un{\End}(V)$ and the associative coalgebra $\un{\End}(V)$, respectively.

\begin{observation}\label{t50}
The diagrams~(\ref{e65}) and~(\ref{e67}) in $\bfX$ extend to a $\int_{\SS^1}\Adj$-indexed diagram in $\bfX$:
\begin{equation}\label{e69}
\xymatrix{
(\ref{e65})\colon
\int_{\SS^1} \fB \sO
\ar[rr]^-{\rm Prop~\ref{t23}}_-{\simeq}
\ar[dr]_-{\int_{\SS^1} {\rm Thm~\ref{t7}}}
&&
\int^\alpha_{\SS^1} \sO
\ar[drr]^-{{}~{}~{}~{}~{}~\int^\alpha_{\SS^1} \lag \underline{\End}_{\bfX}(V) \rag }
&
&&
\\
&
\int_{\SS^1} \Adj
\ar[r]^-{\int_{\SS^1} \lag V \rag }
&
\int_{\SS^1} \fB \bfX
\ar[rr]^-{\rm Prop~\ref{t23}}_-{\simeq}
&&
\int^\alpha_{\SS^1\smallsetminus \sk_0(\SS^1)} \bfX
\ar[r]^-{~\underset{\bfX}\bigotimes~}_-{(\ref{e6})}
&
\bfX
\\
(\ref{e67})\colon
\int_{\SS^1} \fB \sO^{\op}
\ar[rr]^-{\rm Prop~\ref{t23}}_-{\simeq}
\ar[ur]^-{\int_{\SS^1} {\rm Thm~\ref{t7}}}
&&
\int^\alpha_{\SS^1} \sO^{\op}
\ar[urr]_-{{}~{}~{}~{}~{}~\int^\alpha_{\SS^1} \lag \underline{\End}_{\bfX}(V) \rag }
&
&&
.
}
\end{equation}
\end{observation}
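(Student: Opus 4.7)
The plan is to deduce Observation~\ref{t50} as a functoriality statement for factorization homology applied to the commutative diagrams of monoidal $(\infty,1)$-categories furnished by Theorem~\ref{t7}. Namely, Theorem~\ref{t7} supplies fully-faithful monoidal embeddings $\fB\sO \hookrightarrow \Adj$ and $\fB\sO^{\op} \hookrightarrow \Adj$ that identify $\sO$ and $\sO^{\op}$ with $\un{\End}_{\Adj}(-)$ and $\un{\End}_{\Adj}(+)$, respectively. The identification $\Hom_{\fCat_2}(\Adj, \fB\bfX) \simeq \Obj(\bfX^{\sf duals})$ of Observation~\ref{t15} recasts the datum of a dualizable $V \in \bfX$ as a functor $\langle V\rangle \colon \Adj \to \fB\bfX$, and by construction the composites
\[
\fB\sO \hookrightarrow \Adj \xra{~\langle V\rangle~} \fB\bfX
\qquad\text{and}\qquad
\fB\sO^{\op} \hookrightarrow \Adj \xra{~\langle V\rangle~} \fB\bfX
\]
select the associative algebra $V^\vee \ot V$ and the associative coalgebra $V \ot V^\vee$ of~(\ref{e4}), respectively.

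Next, I would apply factorization homology $\int_{\SS^1}(-)$ to these two commuting triangles. Functoriality produces a commuting triangle $\int_{\SS^1} \fB\sO \to \int_{\SS^1} \Adj \to \int_{\SS^1} \fB\bfX$, together with its $\sO^{\op}$-analogue; these will constitute the left- and right-hand triangles in~(\ref{e69}). Proposition~\ref{t23} then supplies the equivalences $\int_{\SS^1}\fB\sO \simeq \int^\alpha_{\SS^1}\sO$, $\int_{\SS^1}\fB\sO^{\op} \simeq \int^\alpha_{\SS^1}\sO^{\op}$, and $\int_{\SS^1}\fB\bfX \simeq \int^\alpha_{\SS^1 \smallsetminus \sk_0(\SS^1)}\bfX$, converting the $\beta$-version of factorization homology on display into the $\alpha$-version. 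Postcomposing with $\underset{\bfX}\bigotimes$ from~(\ref{e6}) then assembles the pieces into the sought diagram~(\ref{e69}); by inspection, the upper and lower composite paths through this diagram recover the defining expressions~(\ref{e65}) and~(\ref{e67}) of the algebra and coalgebra diagrams.

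The main point of care is to verify that the composites $\fB\sO \hookrightarrow \Adj \xra{\langle V\rangle} \fB\bfX$ and $\fB\sO^{\op} \hookrightarrow \Adj \xra{\langle V\rangle} \fB\bfX$ genuinely coincide with the monoidal functors selecting the algebra $V^\vee \ot V$ and coalgebra $V \ot V^\vee$ of~(\ref{e4}). This amounts to reconciling the abstract description of $\un{\End}(V)$ in~(\ref{e4}) --- via the right-lax monoidal functors $\Hom_{\bfX}(\uno,-)$ and $\Hom_{\bfX}(-,\uno)$ --- with the bar-construction description of the monad and comonad of the adjunction in $\fB\bfX$ selected by $V$. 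Once the identifications $\un{\End}_{\Adj}(-) \simeq \sO$ and $\un{\End}_{\Adj}(+)^{\op} \simeq \sO$ of Theorem~\ref{t7} are in hand this reconciliation is definitional, but it is the one place where one must track presentations rather than appeal to abstract functoriality.
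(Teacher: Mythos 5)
Your proposal is correct and follows essentially the same route the paper takes: the paper treats this as an Observation whose justification is exactly the set-up in the preceding paragraph — Observation~\ref{t15} recasting $V$ as $\Adj \xra{\lag V \rag} \fB\bfX$, the composites with the embeddings of Theorem~\ref{t7} selecting the algebra and coalgebra $\un{\End}(V)$ (cf.\ Corollary~\ref{t18} and Notation~\ref{d7}), then applying $\int_{\SS^1}$, invoking the naturality of Proposition~\ref{t23} in the algebra variable, and postcomposing with~(\ref{e6}). The "point of care" you flag — reconciling the right-lax-monoidal description of $\un{\End}(V)$ in~(\ref{e4}) with the monad/comonad of the adjunction in $\fB\bfX$ — is precisely the identification the paper asserts in that preceding paragraph, so nothing is missing.
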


\begin{theorem}\label{r9}
Let $V\in \bfX$ be a dualizable object in a symmetric monoidal $(\oo,1)$-category.
\begin{enumerate}
\item
If $\bfX$ admits geometric realizations,
there exists a $\TT$-invariant \bit{trace} morphism
from Hochschild homology of the associative algebra $\un{\End}(V)$ in $\bfX$:
\[
{\sf HH}\bigl( \un{\End}(V) \bigr)
\xra{~\sf trace~}
\uno
~.
\] 

\item
If $\bfX$ admits totalizations,
there exists a $\TT$-invariant \bit{cotrace} morphism from the coHochschild homology of the associative coalgebra $\un{\End}(V)$ in $\bfX$:
\[
\uno
\xra{~\sf cotrace~}
{\sf cHH}\bigl( \un{\End}(V) \bigr)
~.
\] 

\item
If $\bfX$ admits both geometric realizations and totalizations,
then the two resulting composite $\TT$-invariant endomorphisms of $\uno\in \bfX$ are identical.
Namely, the diagram in $\bfX$ canonically commutes:
{\small
\begin{equation}\label{e5}
\xymatrix{
&&
{\sf HH}\bigl( \un{\End}(V) \bigr)
\ar[drr]^{ {\sf trace} }
&&
&
\\
\uno
\ar[urr]^{{\sf unit}_{\un{\End}(V)} }
\ar[drr]_-{ {\sf cotrace} }
&&
&&
\uno
&
\\
&&
{\sf cHH}\bigl( \un{\End}(V) \bigr)
\ar[urr]_{{\sf counit}_{\un{\End}(V)}}
&&
&
~.
}
\end{equation}
}
Furthermore, forgetting $\TT$-invariance, this common composite endomorphism of $\uno \in \bfX$ is the composite
\[
\uno
\xra{~\eta~}
V^\vee \ot V
~\simeq~
V\ot V^\vee
\xra{~\epsilon~}
\uno
~,
\]
where $\eta$ and $\epsilon$ are the respective unit and counit of the duality between $V$ and $V^\vee$.  

\end{enumerate}

\end{theorem}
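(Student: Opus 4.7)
The plan is to assemble every structure in the theorem from a single $\TT$-equivariant functor obtained by applying factorization homology over $\SS^1$ to the functor $\lag V\rag\colon \Adj \to \fB\bfX$ selecting $V$ (via Observation~\ref{t15}):
\[
F\colon \para^{\tl\!\tr} \underset{\rm Thm~\ref{t10}}\simeq \int_{\SS^1}\Adj \xra{~\int_{\SS^1}\lag V\rag~} \int_{\SS^1}\fB\bfX \underset{\rm Prop~\ref{t23}}\simeq \int^{\alpha}_{\SS^1\smallsetminus \sk_0(\SS^1)}\bfX \xra{~\underset{\bfX}\bigotimes~} \bfX.
\]
Both cone-points of $\para^{\tl\!\tr}$ correspond under Theorem~\ref{t10} to empty $0$-stratum configurations, so $F(\pm\infty)\simeq \uno$, and $F$ is $\TT$-equivariant by Theorem~\ref{t10}.

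For part~(1), restrict $F$ along $\para^{\tl}\hookrightarrow \para^{\tl\!\tr}$. By Corollary~\ref{t30}, this subcategory is $\int^{\alpha}_{\SS^1}\un{\End}_{\Adj}(-)$, and by Observation~\ref{t50} the restriction $F|_{\para^{\tl}}$ is exactly the diagram~(\ref{e65}) for the algebra $\un{\End}(V)\in\Alg(\bfX)$. By Observation~\ref{t49}(1), the colimit is ${\sf HH}\bigl(\un{\End}(V)\bigr)$; its existence follows from the hypothesis that $\bfX$ admits geometric realizations together with the final functor $\bDelta^{\op}\to \para^{\tl}$ of Observation~\ref{t3}. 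Because $+\infty$ is terminal in $\para^{\tl\!\tr}$, the natural transformation from $F|_{\para^{\tl}}$ to the constant diagram at $F(+\infty)\simeq\uno$ induces
\[
\trace\colon {\sf HH}\bigl(\un{\End}(V)\bigr)\simeq \colim F|_{\para^{\tl}}\longrightarrow \uno,
\]
whose $\TT$-invariance is inherited from $F$ and from $\TT$-fixedness of the terminal object. Part~(2) is completely dual: restrict $F$ along $(\para^{\op})^{\tr}\hookrightarrow \para^{\tl\!\tr}$, identify the limit with ${\sf cHH}\bigl(\un{\End}(V)\bigr)$ via Observation~\ref{t49}(2), and use initiality of $-\infty$ to produce the cotrace $\uno\to {\sf cHH}\bigl(\un{\End}(V)\bigr)$.

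For part~(3), since $-\infty$ is initial and $+\infty$ is terminal in $\para^{\tl\!\tr}$, the mapping space $\Hom_{\para^{\tl\!\tr}}(-\infty,+\infty)$ is contractible, supplying a canonical morphism $F(-\infty)\to F(+\infty)$ in $\bfX$. The unit-then-trace composite factors this canonical morphism through $\colim F|_{\para^{\tl}}$, and the cotrace-then-counit composite factors it dually through $\limit F|_{(\para^{\op})^{\tr}}$; these two factorizations together prove commutativity of~(\ref{e5}). To identify the common composite as $\epsilon\circ\eta$, factor $-\infty\to +\infty$ through the object $D\in\para$ corresponding to a single-point configuration on $\SS^1$; inspecting $\lag V\rag$ on the unit and counit 2-morphisms of $\Adj$ shows $F(D)\simeq V^\vee\ot V\simeq V\ot V^\vee$ with the incoming and outgoing morphisms realized as $\eta$ and $\epsilon$ respectively.

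The main obstacle is this last step: \emph{literally} identifying the common composite with $\epsilon\circ\eta$, rather than with a composite twisted by a swap or automorphism, requires tracking through the explicit description of $\Adj$ (Theorem~\ref{t7}) and the equivalence of Theorem~\ref{t10} how the single-point configuration on $\SS^1$ records the adjoint pairing between $V$ and $V^\vee$. Once this is verified, parts~(1)-(3) flow by formal manipulations with initial and terminal objects in $\para^{\tl\!\tr}$ and by the universal properties of (co)Hochschild homology supplied by Observation~\ref{t49}.
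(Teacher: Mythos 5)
Your proposal is correct and follows essentially the same route as the paper's proof: both build the $\TT$-equivariant functor $\para^{\tl\!\tr}\to\bfX$ from Theorem~\ref{t10}, $\int_{\SS^1}\lag V\rag$, Proposition~\ref{t23} and~(\ref{e6}), extract the trace (resp.\ cotrace) from the restriction to $\para^{\tl}$ (resp.\ $(\para^{\op})^{\tr}$) using Observations~\ref{t49},~\ref{t50} and~\ref{t3}, and prove~(3) by noting both composites are the value of this functor on the unique morphism $-\infty\to+\infty$, identified as $\epsilon\circ\eta$ via its unique factorization through the object of $\para$ corresponding to the circle with a single $0$-dimensional stratum. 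The ``main obstacle'' you flag is handled in the paper exactly as you suggest, by inspecting Theorems~\ref{t7} and~\ref{t10} on that one-point object.
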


\begin{proof}
Theorem~\ref{t10} identifies the outermost terms in the diagram~(\ref{e69}) among $(\oo,1)$-categories as
\begin{equation}\label{e70}
\xymatrix{
\para^{\tl}
\ar[dr]
\ar@/^1.2pc/[drrrrrr]
&&&&&
\\
&
\para^{\tl\!\tr}
\ar[rrrrr]
&&&&&
\bfX
\\
(\para^{\op})^{\tr}
\ar@/_1pc/[urrrrrr]
\ar[ur]^-{\rm Obs~\ref{t34}}
&&
&&
&&
.
}
\end{equation}
In this diagram~(\ref{e70}), each cone-point is carried to the symmetric monoidal unit $\uno \in \bfX$.
From the universal property of colimits, the upper triangle in this diagram~(\ref{e70}) determines a morphism in $\bfX$:
\begin{equation}\label{e8}
{\sf HH}\bigl( \un{\End}(V) \bigr)
\underset{\rm Obs~\ref{t49}}{~\simeq~}
\colim\bigl( \para^{\tl} \to \bfX)
\longrightarrow
\uno
~,
\end{equation}
provided this colimit exists.
Observation~\ref{t3} ensures that the colimit of the top functor to $\bfX$ exists provided $\bfX$ admits geometric realizations.
Because the cone-point in $\para^{\tr}$ is $\sB\ZZ$-invariant, this morphism~(\ref{e8}) is $\TT\simeq \sB\ZZ$-invariant.  
This proves Statement~(1).
Statement~(2) is implied by Statement~(1), by replacing $\bfX$ by $\bfX^{\op}$.

It remains to prove Statement~(3).
Assume $\bfX$ admits geometric realizations and totalizations.
Then Statements~(1) and~(2), together with Proposition~\ref{r8}, supply the diamond~(\ref{e5}) in $\bfX$.
By construction of this diamond, both of the composite morphisms $\uno \xra{\trace~ \circ~ \unit}\uno$ and $\uno \xra{{\sf cotrace} ~\circ~ {\sf counit}} \uno$ are the value on the unique morphism between cone-points in $\para^{\tl\!\tr}$ of the horizontal functor in~(\ref{e70}).
In particular, these two $\TT$-invariant endomorphisms of $\uno$ are canonically identified.

Finally, observe that the unique morphism $c_1\xra{\lag -\infty \xra{!}+\infty \rag} \para^{\tl\!\tr}$ uniquely factors through any object $\lambda \in \para$.  
In particular, it uniquely factors through the object $(\ZZ \racts \ZZ)\in \para$, which corresponds through Corollary~\ref{t21} to the disk-refinement of $\SS^1$ with a single connected 1-dimensional stratum.
This object in $\para$ is carried to $\un{\End}_{\bfX}(V) = V\ot V^\vee \in \bfX$.
Furthermore, the unique factorization $-\infty \xra{!}(\ZZ \racts \ZZ) \xra{!} +\infty$ in $\para^{\tl\!\tr}$ is carried to 
\[
\uno
\xra{~\eta~}
V^\vee \ot V
\simeq
V\ot V^\vee
\xra{~\epsilon~}
\uno
~.
\]

\end{proof}

\begin{remark}\label{r10}

The common $\TT$-invariant of endomorphisms of $\uno$ of Theorem~\ref{r9} is the value of a 1-dimensional TQFT $\sZ_V$ on the circle:
\[
\End_{\Bord}(\emptyset)^{\pitchfork  {\sf BDiff}^{\fr}(\SS^1)}
\ni
\bigl(
\emptyset
\xra{~\SS^1~}
\emptyset
\bigr)
~{}~{}~
\overset{\sZ_V}{~\mapsto~}
~{}~{}~
\bigl(
\uno
\xra{~\sZ_V(\SS^1)~}
\uno
\bigr)
~
\in 
\End_{\bfX}(\uno)^{\pitchfork \sB \TT}
~.
\]
Supplying this TQFT, with this value, is the subject of \cite{tang1}.
\\

\end{remark}

\subsection{A proof of a conjecture of To\"en--Vezzosi}\label{sec.traces.2}
We now prove Conjecture~5.1 of~\cite{toen.vezzosi} (which is shown there to imply Conjecture~4.1 of the same article).
We first recall some notation.
\begin{notation}
\begin{enumerate}

\item
[~]

\item
The full $\infty$-subcategory $\CAlg(\Cat_1)^{\sf rigid}\subset \CAlg(\Cat_1)$ consists of the \bit{rigid} symmetric monoidal $(\infty,1)$-categories (i,e,. those symmetric monoidal $(\infty,1)$-categories in which each object is dualizable).

\item
The $(\oo,1)$-category $\Spaces^{\pitchfork \sB \TT} = \Fun(\sB \TT, \Spaces)$ is that of spaces with a $\TT$-action.

\item
The \bit{moduli space of objects} functor is
\[
\Obj \colon \CAlg(\Cat_1)^{\sf rigid}
\longrightarrow
\Spaces
~,\qquad
\bfX
\mapsto 
\Obj(\bfX)
~.
\]

\item
The \bit{free loop} functor is
\[
L \colon \CAlg(\Cat_1)^{\sf rigid}
\longrightarrow
\Spaces^{\pitchfork \sB \TT}
~,\qquad
\bfX
\mapsto 
\Obj(\bfX)^{\pitchfork \SS^1} = \Map\bigl( \SS^1 , \Obj(\bfX) \bigr)
~,
\]
where $\Obj(\bfX)^{\pitchfork \SS^1}$ is the moduli space of objects, in $\bfX$, together with an automorphism.
Precomposing by the unique map $\SS^1\xra{!} \ast$ determines a natural transformation
\[
\Obj 
\xra{~\rm constant~}
L
\]
in which the domain is regarded as taking in $\Spaces^{\pitchfork \sB \TT}$ via the functor $\Spaces \xra{\rm trivial} \Spaces^{\pitchfork \sB \TT}$.

\item
The \bit{categorical based loop} functor is the composition
\[
\End(\uno) \colon \CAlg(\Cat_1)^{\sf rigid}
\xra{~\End(\uno)~}
\Spaces
\xra{~\rm trivial~}
\Spaces^{\pitchfork \sB \TT}
~,\qquad
\bfX
\mapsto 
\End_{\bfX}(\uno)
~.
\]
So each $\TT$-space $\End_{\bfX}(\uno)$ is endowed with the trivial $\TT$-action.

\end{enumerate}

\end{notation}

\begin{cor}[Conjecture~5.1 of \cite{toen.vezzosi}]\label{cor.toenvezzosi}
There is a natural transformation between functors from $\CAlg(\Cat_1)^{\sf rigid}$ to $\Spaces^{\pitchfork \sB \TT}$,
\[
L \xra{~\trace~} \End(\uno)
~,
\]
with the property that the composite natural transformation evaluates as
\[
\Obj
\xra{~\rm constant~}
L
\xra{~\trace~}
\End(\uno)
~,
\qquad
\bigl(V\in \Obj(\bfX)\bigr)
~\mapsto~ 
\Bigl(\bigl(\uno \xra{\eta} V^\vee \ot V \simeq V\ot V^{\vee} \xra{\epsilon} \uno \bigr)\in \End_{\bfX}(\uno) \Bigr)
~.
\]

\end{cor}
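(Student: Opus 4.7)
The plan is to leverage Theorem~\ref{r9} via the canonical inclusion of an associative algebra into its Hochschild homology. For each dualizable $V \in \bfX$, precomposing the $\TT$-invariant trace ${\sf HH}\bigl(\un{\End}(V)\bigr) \to \uno$ of Theorem~\ref{r9}(1) with the canonical map $\un{\End}(V) \to {\sf HH}\bigl(\un{\End}(V)\bigr)$ produces a $\TT$-invariant morphism $\un{\End}(V) \to \uno$ in $\bfX$. Applying $\Map_\bfX(\uno, -)$ then yields a natural map
\[
\End_\bfX(V) ~\longrightarrow~ \End_\bfX(\uno)^{\pitchfork \sB\TT}~,
\]
which on $\id_V$ evaluates, by Theorem~\ref{r9}(3), to the composite $\uno \xra{\eta} V^\vee \ot V \simeq V \ot V^\vee \xra{\epsilon} \uno$.

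Assembling these maps across $V \in \Obj(\bfX) = \Obj(\bfX^{\sf duals})$ (using rigidity of $\bfX$) and restricting to automorphisms, I would obtain a natural map of spaces
\[
L\bfX ~\longrightarrow~ \End_\bfX(\uno)^{\pitchfork \sB\TT}~,
\]
functorial in symmetric monoidal functors of rigid symmetric monoidal $(\infty,1)$-categories. To upgrade this into the desired natural transformation $\trace\colon L \to \End(\uno)$ in $\Spaces^{\pitchfork \sB\TT}$ --- that is, a $\TT$-equivariant map from $L\bfX$ (rotation action) to $\End_\bfX(\uno)$ (trivial action) --- I would identify the external $\sB\TT$-parameter on the codomain with the rotation $\TT$-action on the domain. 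Concretely, the $\TT$-invariance of the trace in Theorem~\ref{r9} arises from the $\TT$-action on $\int_{\SS^1}\Adj$ by rotation on $\SS^1$, while the same $\SS^1$ parametrizes the automorphism datum in a loop $(V, \varphi) \in L\bfX$; naturality of the trace construction then matches these two structures.

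The main obstacle is this equivariance verification: showing that the rotation action on $L\bfX$ (trivial on $\pi_0$ but nontrivial on higher coherences) is matched precisely by the $\TT$-action on $\int_{\SS^1}\Adj \simeq \para^{\tl\!\tr}$ furnished by Theorem~\ref{t10}. This requires tracing $\TT$-equivariance through Corollary~\ref{t5}'s functor $\bcD(\SS^1)^{\tl\!\tr} \to \int^\alpha_{\SS^1}\Dual$ and the endomorphism-algebra construction above, together with the naturality of Theorem~\ref{r9} in the dualizable object. Once this is established, compatibility with the composite $\Obj \xra{\rm constant} L \xra{\trace} \End(\uno)$ giving the trace of identity $\epsilon \circ \eta$ is automatic from Theorem~\ref{r9}(3).
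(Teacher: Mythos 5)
There is a genuine gap, and it appears already at your first step. To get a map $\End_{\bfX}(V)\to \End_{\bfX}(\uno)^{\pitchfork \sB\TT}$ by your route, the composite $\un{\End}(V)\to {\sf HH}\bigl(\un{\End}(V)\bigr)\xra{\trace}\uno$ would have to be a morphism in $\Fun(\sB\TT,\bfX)$ between objects with \emph{trivial} $\TT$-action; but the canonical map from an algebra to its Hochschild homology is not $\TT$-equivariant for the trivial action on the source (classically, Connes' operator is nonzero on the image of $A$ in ${\sf HH}(A)$). Theorem~\ref{r9} makes the trace $\TT$-invariant out of ${\sf HH}$, and Proposition~\ref{r8} makes the \emph{unit} $\uno\to{\sf HH}\bigl(\un{\End}(V)\bigr)$ $\TT$-invariant -- which handles only $\id_V$, i.e.\ constant loops. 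So the asserted fiberwise lift to $\End_{\bfX}(\uno)^{\pitchfork\sB\TT}$, a canonical $\sB\TT$-family refinement of $\trace(\varphi)$ for \emph{every} endomorphism $\varphi$, is not supplied by Theorem~\ref{r9}; nor is it what the corollary asserts, since a $\TT$-equivariant map $L\bfX\to\End_{\bfX}(\uno)$ (trivial action on the target) only produces such a $\sB\TT$-family at loops that are fixed by rotation. Assuming such a refinement pointwise is essentially assuming the To\"en--Vezzosi datum you are trying to construct.

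The second, larger gap is the one you defer as ``the main obstacle'': the dependence of the trace on the automorphism $\varphi$ together with equivariance against rotation of $L\bfX$ \emph{is} the content of the conjecture, and Theorem~\ref{r9} carries no stated naturality in a loop $(V,\varphi)$ -- the rotation action mixes $V$ and $\varphi$, so ``naturality in the dualizable object'' cannot deliver it. The paper's proof never works one object at a time: rigidity plus Observation~\ref{t15} produce a single map of category-objects $\Obj(\bfX)\times\Adj\to\fB\bfX$, and the key input your outline is missing is 1-dimensional nonabelian Poincar\'e duality (Proposition~\ref{prop.mappingspace}), which identifies $\int_{\SS^1}\Obj(\bfX)\simeq \Obj(\bfX)^{\pitchfork\SS^1}=L\bfX$; combined with Lemma~\ref{t4}, Proposition~\ref{t23} and Theorem~\ref{t10}, applying $\int_{\SS^1}$ to that single map and then $\underset{\bfX}\bigotimes$ yields a functor $L\bfX\times\para^{\tl\!\tr}\to\bfX$ that is $\TT$-equivariant by construction, because the circle parametrizing the loop and the circle over which factorization homology is taken are the same circle. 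Evaluating on the unique $1$-cell between cone-points and restricting over $\partial c_1\mapsto(\uno,\uno)$ then gives the $\TT$-equivariant map $L\bfX\to\End_{\bfX}(\uno)$ all at once, with the value on constant loops given by Theorem~\ref{r9}(3) as you say. Without an analogue of this circle-local packaging of the loop variable, your outline cannot be completed as stated.
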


\begin{proof}
Because $\bfX$ is rigid, then the monomorphism $\Obj(\bfX^{\sf duals}) \hookrightarrow \Obj(\bfX)$ is an equivalence.
Using this, Observation~\ref{t15} supplies a map
\[
\Obj(\bfX)
\underset{\rm Obs~\ref{t15}}{~\simeq~}
\Hom_{\fCat_2}\bigl( \Adj , \fB \bfX \bigr)
~.
\]
This map is adjoint to a morphism between category-objects internal to $(\infty,1)$-categories:
\begin{equation}\label{e9}
\Obj(\bfX)
\times
\Adj
\longrightarrow
\fB \bfX
~.
\end{equation}
We now explain the following sequence of canonically $\TT$-equivariant functors between $(\infty,1)$-categories:
\begin{eqnarray}
\nonumber
\Obj(\bfX)^{\pitchfork \SS^1}
\times
c_1
&
\xra{~ \id \times \lag -\infty \xra{!} + \infty \rag~}
&
\Obj(\bfX)^{\pitchfork \SS^1}
\times
\para^{\tl\!\tr}
\\
\nonumber
&
\underset{\id \times {\rm Thm}~\ref{t10}}{~\simeq~}
&
\Obj(\bfX)^{\pitchfork \SS^1}
\times
\displaystyle \int_{\SS^1}
\Adj
\\
\nonumber
&
\underset{\rm Prop~\ref{prop.mappingspace}}{\xla{~\simeq~}}
&
\displaystyle \int_{\SS^1}
\Obj(\bfX)
\times
\displaystyle \int_{\SS^1}
\Adj
\\
\nonumber
&
\underset{\rm Lem~\ref{t4}}{\xla{~\simeq~}}
&
\displaystyle \int_{\SS^1}
\Obj(\bfX)
\times
\Adj
\\
\nonumber
&
\xra{~\displaystyle \int_{\SS^1}(\ref{e9})~}
&
\displaystyle \int_{\SS^1} \fB \bfX
\\
\nonumber
&
\underset{\rm Prop~\ref{t23}}{~\simeq~}
&
\displaystyle \int^\alpha_{\SS^1} \bfX
\\
\label{e10}
&
\underset{(\ref{e6})}{\xra{~\underset{\bfX}\bigotimes~}}
&
\bfX
~.
\end{eqnarray}
The first morphism is the product of the identity map on $\Obj(\bfX)^{\pitchfork \SS^1}$, which is $\TT$-equivariant, and the unique 1-cell between cone-points in $\para^{\tl\!\tr}$, which is $\TT$-invariant.
The second morphism is Theorem~\ref{t10}.
The third identification is Proposition~\ref{prop.mappingspace}, which identifies factorization homology of an $\infty$-groupoid as a mapping space. 
The fourth leftward equivalence is Lemma~\ref{t4}, which states that factorization homology carries products of category-objects to products.  
The fifth functor is factorization homology over $\SS^1$ applied to~(\ref{e9}).
The penultimate functor is the identification of Proposition~\ref{t23} between the $\beta$-version factorization homology and the $\alpha$-version of factorization homology. 
The last functor is the functor~(\ref{e6}), which is $\TT$-invariant.

The composite $\TT$-invariant functor~(\ref{e10}) is, in turn, adjoint to a $\TT$-invariant map
\begin{equation}
\label{e11}
\Obj(\bfX)^{\pitchfork \SS^1}
\longrightarrow
\Hom_{\Cat_1}(
c_1
,
\bfX
)
~.
\end{equation}
By inspection, this $\TT$-invariant map canonically fits into a commutative square:
\[
\xymatrix{
\Obj(\bfX)^{\pitchfork \SS^1}
\ar[rr]^-{(\ref{e11})}
\ar[d]
&&
\Hom_{\Cat_1}(c_1
,
\bfX)
\ar[d]^-{\rm restriction}
\\
\ast
\ar[rr]^-{\lag (\uno , \uno ) \rag}
&&
\Hom_{\Cat_1}(
\partial c_1
,
\bfX
)
}
\]
By definition of the space $\End_{\bfX}(\uno)$, such a commutative diagram is precisely a $\TT$-invariant map
\begin{equation}\label{e12}
L\bfX~:=~\Obj(\bfX)^{\pitchfork \SS^1}
\longrightarrow
\End_{\bfX}(\uno)
~.
\end{equation}
This map~(\ref{e12}) is evidently functorial in $\bfX\in \CAlg(\Cat_1)^{\sf rigid}$, thereby supplying the sought $\TT$-invariant natural transformation:
\[
L
\longrightarrow
\End(\uno)
~.
\]
Finally, Theorem~\ref{r9}(3) gives that the value on the constant loop at $V$ is $\uno\xra{\epsilon\circ \eta}\uno$.
\end{proof}

\appendix

\section{Recollections: the paracyclic category and adjunctions}\label{sec.adjunction}

\subsection{The paracyclic category}

We review basic features of paracyclic category $\para$, introduced by Getzler and Jones in \cite{getzler.jones}. See also Definition~4.2.1 of~\cite{lurie.rotation}.

\begin{definition}
An object in the \bit{paracyclic category} $\para$ is a linearly ordered set $I$ with finite intervals (i.e., for each $i,j\in I$, the set $[i,j] = \{ k\in I \mid i \leq k \leq j\}$ is finite) equipped with an action of the additive group of integers $\ZZ\lacts I$ for which $i< 1 \cdot i$.  
A morphism in $\para$ is a $\ZZ$-equivariant non-decreasing map.   
Composition is composition of maps.
Identities are identity maps.
\end{definition}

\begin{observation}
The $\ZZ$-actions of each object in $\para$ assemble as a $\sB\ZZ$-action on the category $\para$.
Such an action is, for each $\lambda,\lambda'\in \para$, a left $\ZZ$-action on $\Hom_{\para}(\lambda,\lambda')$, 
such that, for each $\lambda,\lambda',\lambda''\in \para$, the composition map
\[
\Hom_{\para}(\lambda',\lambda'')
\times
\Hom_{\para}(\lambda,\lambda')
\xra{~\circ~}
\Hom_{\para}(\lambda,\lambda'')
\]
is equivariant with respect to the homomorphism $\ZZ\times \ZZ\xra{+} \ZZ$.
And, indeed, for $\lambda,\lambda'\in \para$, take the $\ZZ$-action
\[
\ZZ \racts \Hom_{\para}(\lambda,\lambda')
~,\qquad\text{ given by }\qquad
r \cdot f 
\colon 
\lambda \ni 
\ell
\mapsto r\cdot f(\ell)
\in
\lambda'
~.
\]
The requisite equivariance with respect to the homomorphism $\ZZ\times \ZZ\xra{+}\ZZ$ follows from, for each composable pair $\lambda\xra{f} \lambda'\xra{g}\lambda''$ of morphisms in $\para$, and for $r,s\in \ZZ$, and for each $\ell\in \lambda$, the identity in $\lambda''$:
\[
(r\cdot g) \circ (s\cdot f) (\ell)
~=~
(r \cdot g) \bigl( s \cdot f ( \ell) \bigr)
~=~
s \cdot (r \cdot g) \bigl( f(\ell) \bigr)
~=~
(s+r) \cdot g \bigl( f (\ell) \bigr)
~=~
(s+r) \cdot (g \circ f) (\ell)
~.
\]

\end{observation}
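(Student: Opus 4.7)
The plan is to realize the promised $\sB\ZZ$-action on the category $\para$ through a natural automorphism of the identity functor $\id_{\para}$, since such a natural automorphism is the same data as a $\sB\ZZ$-action on $\para$ whose underlying autoequivalence is the identity functor: the group homomorphism $\ZZ \to \Aut(\id_{\para})$ sends the generator $1$ to this natural automorphism, and the resulting $\ZZ$-actions on Hom-sets (and their compatibility with composition) follow formally from naturality.

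First I would define, for each $\lambda \in \para$, the map $t_\lambda \colon \lambda \to \lambda$ by $t_\lambda(\ell) := 1 \cdot \ell$, using the given $\ZZ$-action on $\lambda$. Since the $\ZZ$-action on objects of $\para$ is by order-preserving bijections (so that $\ZZ$-equivariant order-preserving maps form a well-behaved collection of morphisms), $t_\lambda$ is an order-preserving bijection; it is $\ZZ$-equivariant because $\ZZ$ is abelian; hence $t_\lambda$ is an automorphism in $\para$, with inverse left multiplication by $-1$.

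Next I would verify that $t$ is a natural transformation $\id_{\para} \Rightarrow \id_{\para}$: for any morphism $f \colon \lambda \to \lambda'$ in $\para$ and any $\ell \in \lambda$, one has $t_{\lambda'}(f(\ell)) = 1 \cdot f(\ell) = f(1 \cdot \ell) = f(t_\lambda(\ell))$, where the middle equality is precisely the $\ZZ$-equivariance of $f$ at the group element $1$. Thus $t$ is a natural automorphism of $\id_{\para}$, and $r \mapsto t^r$ defines a group homomorphism $\ZZ \to \Aut(\id_{\para})$, which is exactly the desired $\sB\ZZ$-action on $\para$.

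Finally I would unpack this to match the description given in the observation. The induced $\ZZ$-action on $\Hom_{\para}(\lambda, \lambda')$ is given by $r \cdot f := t_{\lambda'}^r \circ f$; by naturality this equals $f \circ t_\lambda^r$, and in pointwise terms recovers the formula $(r \cdot f)(\ell) = f(r \cdot \ell) = r \cdot f(\ell)$. The equivariance of composition with respect to $\ZZ \times \ZZ \xra{+} \ZZ$ then follows from the chain
\[
(r \cdot g) \circ (s \cdot f) = t_{\lambda''}^r \circ g \circ t_{\lambda'}^s \circ f = t_{\lambda''}^{r+s} \circ g \circ f = (r+s) \cdot (g \circ f),
\]
the middle step using naturality of $t$ to commute $t_{\lambda'}^s$ past $g$. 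I do not anticipate any real obstacle: the entire observation is a bookkeeping translation between a $\sB\ZZ$-action on $\para$ and the concrete $\ZZ$-actions on Hom-sets that such an action induces, with the only subtle point being the tacit convention that each object's $\ZZ$-action is by order-preserving bijections so that the proposed $t_\lambda$ is actually a morphism of $\para$.
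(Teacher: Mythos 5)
Your proof is correct, and it reaches the same two elementary facts the paper's verification rests on (that the $\ZZ$-action on each object is by order-preserving maps, and that morphisms of $\para$ are $\ZZ$-equivariant while $\ZZ$ is abelian), but it packages them differently. The paper encodes the $\sB\ZZ$-action directly as a $\ZZ$-action on each $\Hom_{\para}(\lambda,\lambda')$, given by postcomposition with the action on the target, and then checks equivariance of composition with respect to $\ZZ\times\ZZ\xra{+}\ZZ$ by the displayed pointwise chain of equalities, whose middle step is exactly the equivariance of the morphism $r\cdot g$. You instead assemble the shifts $t_\lambda\colon \ell\mapsto 1\cdot\ell$ into a natural automorphism $t$ of $\id_{\para}$ and take the homomorphism $\ZZ\to\Aut(\id_{\para})$, $r\mapsto t^r$, as the action; the Hom-set actions and the additivity of composition then fall out formally, and your pointwise unpacking recovers precisely the paper's formula $(r\cdot f)(\ell)=r\cdot f(\ell)$. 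What your route buys is a cleaner conceptual statement (the paracyclic rotation as an element of $\Aut(\id_{\para})$, which is how the $\TT$-action is usually presented, e.g.\ in the cited rotation-invariance preprint) and a single naturality check in place of the paper's hands-on manipulation; what it costs is the unproved translation that, for an ordinary category such as $\para$, a $\sB\ZZ$-action with underlying equivalence the identity is the same data as a homomorphism $\ZZ\to\Aut(\id_{\para})$ --- though the paper's own formulation (``$\ZZ$-actions on Hom-sets with composition equivariant for $+$'') tacitly invokes an equivalent identification, so you are at the same level of rigor as the Observation itself. Your flagged caveat, that each object's $\ZZ$-action must be by order-preserving bijections so that $t_\lambda$ (and, in the paper's version, $r\cdot f$) is actually a morphism of $\para$, is indeed the implicit convention in the paper's definition, and it is needed equally by both arguments.
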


For the next observation, we reference the category ${\sf Lin}$ of linearly ordered sets and order-preserving maps between them, as well as the natural lift of the presheaf represented by $[1]\in {\sf Lin}$:
\[
\xymatrix{
{\sf Lin}^{\op}
\ar@{-->}[rr]^-{\un{\Hom}_{\sf Lin}\bigl(-,[1]\bigr)} 
\ar[dr]_-{\Hom_{\sf Lin}\bigl(-,[1]\bigr)} 
&&
{\sf Lin} \ar[dl]^-{\rm forget}
\\
&
\Sets
&
~.
}
\]
\begin{observation}
\label{t34}
There is a natural lift of the composite horizontal functor
\[
\xymatrix{
\para^{\op}
\ar@{-->}[rrr]_-{\simeq}^-{\Hom_{\sf Lin}\bigl(-,[1]\bigr)}
\ar[d]_-{\rm forget}
&&& 
\para   \ar[d]^-{\rm forget } 
\\
{\sf Lin}^{\op}  
\ar[rrr]^-{\un{\Hom}_{\sf Lin}(-,[1])}
&&&
{\sf Lin}
~,
}
\]
which is an equivalence between categories.

\end{observation}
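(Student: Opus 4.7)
My plan proceeds in three stages: construct the lift on objects, verify functoriality, and prove the equivalence through an iterated evaluation natural isomorphism. For the first stage, given $\lambda\in\para$ I endow $\un{\Hom}_{\sf Lin}(\lambda,[1])$ with a left $\ZZ$-action by precomposition, $(n\cdot f)(\ell):=f(n\cdot\ell)$. The action axioms follow directly from those for $\lambda$; the action is by order-automorphisms because each $n\cdot-\colon\lambda\to\lambda$ is an order-automorphism; and the pointwise order on $\un{\Hom}_{\sf Lin}(\lambda,[1])$ is linear because down-closed subsets of the linearly ordered set $\lambda$ form a chain under inclusion. That this action has finitely many orbits follows from a cut-counting argument that uses the discreteness of the underlying linearly ordered set of $\lambda$ forced by the paracyclic structure. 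Functoriality is automatic: a $\ZZ$-equivariant order-preserving $\lambda\to\mu$ precomposes to a $\ZZ$-equivariant order-preserving map $\un{\Hom}_{\sf Lin}(\mu,[1])\to\un{\Hom}_{\sf Lin}(\lambda,[1])$, and the induced functor $\para^{\op}\to\para$ evidently lifts $\un{\Hom}_{\sf Lin}(-,[1])\colon{\sf Lin}^{\op}\to{\sf Lin}$.

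For the equivalence I introduce the natural evaluation
\[
\ev_\lambda\colon\lambda\longrightarrow\un{\Hom}_{\sf Lin}\bigl(\un{\Hom}_{\sf Lin}(\lambda,[1]),[1]\bigr),\qquad\ell\longmapsto\bigl(f\mapsto f(\ell)\bigr),
\]
and check order-preservation, $\ZZ$-equivariance, and naturality in $\lambda$ directly from the definitions. The lift being an equivalence then reduces to showing each $\ev_\lambda$ is a bijection in $\para$. Injectivity is a short separation argument: two distinct $\ell_1,\ell_2\in\lambda$ can be separated by the cut of $\lambda$ at $\ell_1$, producing different values in $[1]$ under $\ev_\lambda$.

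The hard part will be surjectivity of $\ev_\lambda$. The obstruction is that naive interval duality for linearly ordered sets is not an equivalence: each application of $\un{\Hom}_{\sf Lin}(-,[1])$ strictly enlarges a linearly ordered set by adding two boundary cuts, so the bottom horizontal functor in the diagram is \emph{not} an equivalence, and the lift must therefore be an equivalence for a paracyclic-specific reason. The strategy is to analyze an arbitrary order-preserving $F\colon\un{\Hom}_{\sf Lin}(\lambda,[1])\to[1]$ via the cut it determines on $\un{\Hom}_{\sf Lin}(\lambda,[1])$ and to identify that cut with an element of $\lambda$ using the $\ZZ$-action combined with the discreteness of $\lambda$. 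Concretely, the boundary cuts of $\un{\Hom}_{\sf Lin}(\lambda,[1])$ sit at fixed points of the induced $\ZZ$-action, and an orbit-wise analysis must verify that the boundary contributions cancel under the iterated construction precisely in the paracyclic setting. Executing this cancellation---matching cuts of cuts to elements of $\lambda$ without spurious boundary data---is the principal combinatorial content of the proof and completes the inverse equivalence.
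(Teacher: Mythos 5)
Your stage-one construction is where the proposal goes wrong, and the difficulty you defer to the end is not merely hard for that construction --- it is false. If the dual of $\lambda$ is taken to be \emph{all} of $\un{\Hom}_{\sf Lin}(\lambda,[1])$ with the precomposition action, then every value of your functor contains the two constant maps, which are a $\ZZ$-fixed minimum and a $\ZZ$-fixed maximum. But an object such as $\ZZ$ with the translation action lies in $\para$ and has no $\ZZ$-fixed points at all, so your functor is not essentially surjective; and for $\lambda\cong\ZZ$ the iterated dual acquires four extreme elements (cuts at or beside the two constants) that are never in the image of $\ev_\lambda$. No orbit-wise cancellation of ``boundary contributions'' can rescue surjectivity of $\ev_\lambda$: the spurious boundary data is genuinely there, because it was built into the functor. (The paper records this statement as an unproved Observation, and read ultra-literally --- as a strict lift of the full $\un{\Hom}_{\sf Lin}(-,[1])$ --- it requires the same emendation; Remark~\ref{rem.PD} makes the intent clear, namely the duality exchanging the $0$-strata and $1$-strata of a stratified circle, which have the same number of $\ZZ$-orbits and no ``extreme'' strata.)

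The repair is to change the construction, not to push harder on surjectivity: define the dual of $\lambda$ to be the linearly ordered $\ZZ$-set of \emph{interior} cuts, i.e.\ the non-constant order-preserving maps $\lambda\to[1]$ (equivalently, cuts with both sides non-empty; equivalently, the cuts not fixed by the $\ZZ$-action). The paracyclic structure then enters exactly where you suspected it must, but in a different place than you propose: a $\ZZ$-equivariant order-preserving map $\lambda\to\mu$ has image that is cofinal and coinitial in $\mu$ (translate any point of the image), so interior cuts pull back to interior cuts --- this is what makes the restricted assignment contravariantly functorial on $\para$, whereas it fails on all of ${\sf Lin}$ (constant maps pull interior cuts back to constants). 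For $\lambda$ isomorphic to $\ZZ$ with the $\ZZ$-action by translation by $r$, the interior cuts again form a copy of $\ZZ$ on which the action is free with $r$ orbits; the evaluation map lands in interior cuts of interior cuts precisely because $\lambda$ has no extrema; and it is then a $\ZZ$-equivariant order-bijection, hence an isomorphism in $\para$. With that corrected functor, your injectivity argument and your reduction of the equivalence to naturality and bijectivity of $\ev_\lambda$ (applied to the functor and its opposite) go through; as written, however, the proposal's final step cannot be executed.
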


\begin{remark}\label{rem.PD}

Through the identification $\bcD(\SS^1) \xra{\simeq}\para$ of Corollary~\ref{t21}, the identification of Observation~\ref{t34} exchanges, for each non-trivial stratification of $\SS^1$, its set of 0-dimensional strata with its set of 1-dimensional strata, each with its inherited cyclic ordering.
In this sense, the equivalence of Observation~\ref{t34} is a combinatorial implementation of \bit{Poincar\'e duality} for the circle.  
\\

\end{remark}

Consider the functor
\begin{equation}\label{e89}
\bDelta
\longrightarrow
\para
~,\qquad
[p]\mapsto [p]^{\bigstar \ZZ}
~,
\end{equation}
whose value on a finite linearly ordered set is its $\ZZ$-fold join, as it is endowed its resulting order-preserving $\ZZ$-action.
\begin{prop}[Proposition~4.2.8 of~\cite{lurie.rotation}]\label{t74}
The functor $\bDelta \xra{(\ref{e89})} \para$ is initial.

\end{prop}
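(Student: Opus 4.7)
The plan is to verify initiality via (the $\oo$-categorical form of) Quillen's Theorem~A: it suffices to show, for each $\lambda\in\para$, that the comma category
\[
\iota/\lambda ~:=~ \bDelta\times_{\para}\para_{/\lambda}
\]
is weakly contractible, where $\iota\colon\bDelta\to\para$ is the functor of~(\ref{e89}).

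I first unpack the comma. By $\ZZ$-equivariance, an order-preserving $\ZZ$-equivariant map $f\colon[p]^{\bigstar\ZZ}\to\lambda$ is determined by the tuple $x_i:=f(0,i)$ for $i\in\{0,\ldots,p\}$, and the order-preserving condition reduces to the single chain
\[
x_0\leq x_1\leq\cdots\leq x_p\leq 1\!\cdot\! x_0
\]
in $\lambda$, where $1\!\cdot\!(-)$ denotes the $\ZZ$-action; morphisms $([p],x)\to([q],y)$ are morphisms $\phi\colon[p]\to[q]$ in $\bDelta$ with $y_{\phi(i)}=x_i$. This description exhibits $\iota/\lambda$ as the category of simplices of the simplicial set
\[
\Lambda_\lambda\colon[p]\mapsto\bigl\{(x_0,\ldots,x_p)\in\lambda^{p+1}:x_0\leq\cdots\leq x_p\leq 1\!\cdot\! x_0\bigr\},
\]
so by the standard equivalence between the classifying space of a category of simplices and the geometric realization of the associated simplicial set, the problem reduces to showing $|\Lambda_\lambda|\simeq\ast$.

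To that end, fix an identification $\lambda\cong\ZZ\times[k-1]$ (with $k$ the number of $\ZZ$-orbits) and embed $\lambda\hookrightarrow\RR$ via $(n,i)\mapsto n+i/k$, so that $\lambda=\tfrac{1}{k}\ZZ$ as a discrete subset of $\RR$. The non-degenerate $p$-simplices of $\Lambda_\lambda$ are then strictly increasing $(p+1)$-tuples in $\lambda\subset\RR$ of diameter at most $1$, so $|\Lambda_\lambda|$ coincides as an abstract simplicial complex with the Vietoris--Rips complex $\mathrm{VR}(\lambda,1)$ at scale $1$. My plan is to conclude by the nerve lemma applied to the open cover $\{U_v:v\in\lambda\}$ of $\RR$ with $U_v:=(v-\tfrac{1}{2}-\epsilon,v+\tfrac{1}{2}+\epsilon)$ for a small fixed $0<\epsilon<\tfrac{1}{2k}$: each $U_v$ is a contractible open interval, each non-empty finite intersection $\bigcap_jU_{v_j}$ is again an open interval (hence contractible), and $(p+1)$-tuples $\{v_0,\ldots,v_p\}$ have non-empty common intersection precisely when $\max_iv_i-\min_iv_i\leq 1$---which matches the simplex criterion for $\mathrm{VR}(\lambda,1)$. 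The nerve of the cover is therefore canonically identified with $|\Lambda_\lambda|$, and the classical nerve lemma gives $|\Lambda_\lambda|\simeq\RR\simeq\ast$.

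The main obstacle I anticipate is the bookkeeping required to confirm that the combinatorics of the nerve matches that of $\Lambda_\lambda$ on the nose, not merely up to abstract homotopy equivalence; in particular, verifying that the chosen $\epsilon$ is large enough for adjacent $U_v$'s to overlap whenever $v,v'$ are within distance $1$ in $\lambda$, yet small enough not to introduce spurious intersections. Should that comparison become delicate, the cleaner alternative I plan to fall back on is to construct an explicit $\ZZ$-equivariant strong deformation retraction of $|\Lambda_\lambda|$ onto its ``zigzag'' $1$-subcomplex---the linear graph on $\lambda\subset\RR$ connecting each $v\in\lambda$ to its immediate successor in the ambient order---performed cell-by-cell on top-dimensional $k$-simplices via the standard contraction of each $k$-simplex onto the Hamiltonian path through its vertices induced by the ambient linear order. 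Gluing is automatic because on each shared codimension-$1$ face both local retractions restrict to the zigzag sub-path common to the two adjacent simplices; and since the zigzag is homeomorphic to $\RR$ and hence contractible, either route yields $|\Lambda_\lambda|\simeq\ast$, completing the verification that $\iota$ is initial.
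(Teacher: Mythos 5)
The paper does not actually prove this statement: Proposition~\ref{t74} is quoted with a citation to Proposition~4.2.8 of~\cite{lurie.rotation}, so there is no in-paper argument to compare against. Your proposal is a correct, self-contained proof, and its main line is sound: by (the $\infty$-categorical) Theorem~A, initiality reduces to weak contractibility of each comma category $\bDelta\times_{\para}\para_{/\lambda}$; $\ZZ$-equivariance identifies a morphism $[p]^{\bigstar\ZZ}\to\lambda$ with a chain $x_0\leq\cdots\leq x_p\leq 1\cdot x_0$, so the comma category is the category of simplices of the simplicial set $[p]\mapsto\Hom_{\para}([p]^{\bigstar\ZZ},\lambda)$, whose classifying space is the realization of that simplicial set; after normalizing $\lambda\cong\tfrac{1}{k}\ZZ\subset\RR$ with the translation action, the nondegenerate simplices are exactly the finite subsets of $\tfrac1k\ZZ$ of diameter at most $1$, i.e.\ the Vietoris--Rips complex at scale $1$; and your cover of $\RR$ by the intervals $U_v$ (with $0<\epsilon<\tfrac1{2k}$, so that nonempty intersection is exactly the condition $\max-\min\leq 1$) is a good open cover whose nerve is that complex, so the nerve lemma gives contractibility. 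Two remarks. First, both the chain description and the normalization silently use the classification of paracyclic objects: every object is isomorphic to $\tfrac1k\ZZ$ with the generator acting as a positive translation. This is correct for the paracyclic category as in \cite{lurie.rotation} (and is what the paper intends), but it is worth saying explicitly, since the paper's one-line definition of $\para$ does not literally rule out degenerate or oppositely-oriented $\ZZ$-actions, for which the statement would fail (the relevant $\Hom$ is empty). Second, your fallback argument (cell-by-cell retraction onto the zigzag graph) is not correct as sketched: the retraction of a top simplex onto the Hamiltonian path through its vertices does not restrict on a shared codimension-$1$ face to the corresponding retraction of that face, and the common zigzag sub-path of two adjacent simplices need not lie in their shared face, so the gluing claim fails. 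Since the nerve-lemma route is complete, this does not affect the validity of the proof; just drop the fallback or repair it separately.
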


As an $(\oo,1)$-category, $\bDelta^{\op}$ is sifted.  
Because the property of being sifted is inherited along final functors, 
Proposition~\ref{t74} and Observation~\ref{t34} imply the following.
\begin{cor}\label{s2}
As $(\oo,1)$-categories, both $\para$ and $\para^{\op}$ are both sifted and cosifted.
In particular, their classifying space are contractible: $\sB \para\simeq \ast \simeq \sB(\para^{\op})$. 

\end{cor}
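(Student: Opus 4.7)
The plan is to assemble the corollary directly from the two inputs flagged in the paragraph preceding it, together with the classical fact that $\bDelta^{\op}$ is sifted.

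First I would recall that $\bDelta^{\op}$ is sifted (this is standard: the diagonal $\bDelta^{\op}\to \bDelta^{\op}\times \bDelta^{\op}$ is cofinal). Proposition~\ref{t74} asserts that $\bDelta \to \para$ is initial, so passing to opposites, $\bDelta^{\op} \to \para^{\op}$ is final (cofinal). Since siftedness is preserved under cofinal functors with sifted source, $\para^{\op}$ is sifted.

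Next I would apply Observation~\ref{t34}, which provides a (contravariant) equivalence of categories $\para^{\op} \simeq \para$. Siftedness is invariant under equivalences, so $\para$ is sifted as well. Applying the same equivalence once more, $\para^{\op} \simeq \para$ is sifted; equivalently, $\para$ is cosifted, and symmetrically $\para^{\op}$ is cosifted.

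For the final clause, I would invoke the general fact that any sifted $\infty$-category has contractible classifying space. Concretely, if $\cC$ is sifted then the diagonal $\cC \to \cC \times \cC$ is cofinal and hence induces an equivalence on classifying spaces
\[
\sB\cC \xra{~\simeq~} \sB(\cC \times \cC) \simeq \sB\cC \times \sB\cC,
\]
which, combined with the nonemptiness clause in the definition of sifted, forces $\sB\cC \simeq \ast$. Applying this to both $\cC = \para$ and $\cC = \para^{\op}$ finishes the proof.

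I do not anticipate any genuine obstacle: the argument is a bookkeeping exercise in the variance conventions for initial vs.\ final and sifted vs.\ cosifted. The only point deserving minor care is keeping straight that Proposition~\ref{t74} gives initiality of $\bDelta \to \para$, which is exactly the input needed to get siftedness of $\para^{\op}$ (not of $\para$) from that of $\bDelta^{\op}$; the remaining half of the statement is then a formal consequence of Observation~\ref{t34}.
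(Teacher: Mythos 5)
Your proof is correct and is essentially the paper's own argument: the paper deduces the corollary in exactly this way, combining siftedness of $\bDelta^{\op}$, inheritance of siftedness along the final functor $\bDelta^{\op}\to\para^{\op}$ obtained from Proposition~\ref{t74}, and the equivalence $\para^{\op}\simeq\para$ of Observation~\ref{t34}, with contractibility of classifying spaces following from siftedness. No gaps; your variance bookkeeping matches the intended reading.
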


\begin{observation}\label{t3}
The composite functor
\[
\bDelta^{\op} \xra{~(\ref{e89})~} \para^{\op}
\underset{\rm Obs~\ref{t34}}{~\simeq~}
\para
\hookrightarrow 
\para^{\tl}
~,
\]
is final: finality of~(\ref{e89}) is Proposition~\ref{t74}; 
finality of $\para\hookrightarrow \para^{\tl}$ follows using Quillen's Theorem~A from contractibility of $\sB \para$, which is Corollary~\ref{s2}.

\end{observation}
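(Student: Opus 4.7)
The plan is to exploit the fact that finality of functors is stable under composition, and decompose the composite into three stages, each of which can be handled by a result already in the paper.

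Stage one is the functor $\bDelta^{\op} \to \para^{\op}$, i.e.\ the opposite of the join functor~(\ref{e89}). Proposition~\ref{t74} asserts that $\bDelta \xra{(\ref{e89})} \para$ is initial, and initiality of a functor is equivalent to finality of its opposite; so this stage is immediate. Stage two is the equivalence $\para^{\op} \simeq \para$ from Observation~\ref{t34}, and any equivalence of $\infty$-categories is both initial and final. Composing, the only substantive task is to verify finality of the third stage, namely the inclusion $\iota \colon \para \hookrightarrow \para^{\tl}$.

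For this I would invoke Quillen's Theorem~A (in its $\infty$-categorical formulation): $\iota$ is final if and only if for each object $y\in \para^{\tl}$, the slice $\infty$-category $\para^{y/}:= y\downarrow \iota$ has contractible classifying space. Split into two cases according to whether $y$ is a genuine object of $\para$ or the adjoined initial cone-point $-\infty$. In the first case, $\para^{y/}$ is the usual undercategory $y/\para$, which has $\id_y$ as an initial object and is therefore contractible. In the second case, $y=-\infty$ admits a unique morphism in $\para^{\tl}$ to every object of $\para$, so the projection $\para^{-\infty/} \to \para$ is an equivalence of $\infty$-categories; contractibility of $\sB \para^{-\infty /}$ then reduces to contractibility of $\sB \para$, which is exactly Corollary~\ref{s2}.

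There is no serious obstacle here: the argument is essentially bookkeeping once Proposition~\ref{t74}, Observation~\ref{t34}, and Corollary~\ref{s2} are on the table. The only point that requires any attention is the correct identification, at the cone-point $-\infty$, of the slice $\para^{-\infty/}$ with $\para$ itself; this is precisely where the contractibility of $\sB\para$ (and hence the siftedness of $\para$) gets used, and it is the reason that adjoining an initial object to a sifted $\infty$-category produces a final inclusion.
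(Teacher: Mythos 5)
Your proposal is correct and is essentially the paper's own argument: finality of $\bDelta^{\op}\to\para^{\op}$ is the opposite of the initiality in Proposition~\ref{t74}, the equivalence of Observation~\ref{t34} is final, and the inclusion $\para\hookrightarrow\para^{\tl}$ is final by Quillen's Theorem~A from contractibility of $\sB\para$ (Corollary~\ref{s2}), whose slice-by-slice verification you merely spell out. The only quibble is the parenthetical ``and hence the siftedness'': in Corollary~\ref{s2} the implication runs from siftedness to contractibility of the classifying space, not conversely, but nothing in your argument depends on this.
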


The following technical result is used at the end of the proof of Theorem~\ref{t10}.
A proof of this result is a consequence of some simple observations, which reference the main technical lemma in~\cite{fibrations}.  

\begin{lemma}
\label{t53}
The canonical functor between $(\oo,1)$-categories,
\[
\para^{\tl} \underset{\para} \amalg \para^{\tr}
\xra{~\simeq~}
\para^{\tl \tr}
~,
\]
is an equivalence.

\end{lemma}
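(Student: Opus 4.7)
The plan is to show the functor is bijective on objects and fully faithful by analyzing mapping spaces, where all non-trivial content reduces to the contractibility $\sB\para \simeq \ast$ from Corollary~\ref{s2}.

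First, both the pushout $\para^{\tl}\sqcup_{\para}\para^{\tr}$ and $\para^{\tl\tr}$ have the same set of objects $\{-\infty\}\sqcup \Obj(\para)\sqcup\{+\infty\}$, and the canonical functor is the identity on objects. It therefore suffices to verify that the canonical functor is fully faithful. Since the inclusions $\para\hookrightarrow \para^{\tl}\hookrightarrow \para^{\tl\tr}$ and $\para\hookrightarrow \para^{\tr}\hookrightarrow \para^{\tl\tr}$ are each fully faithful, and since the pushout of $\infty$-categories along fully faithful functors remains fully faithful on each of the cofactors, the mapping spaces $\Hom(\lambda,\lambda')$ for $\lambda,\lambda'\in \para$, as well as $\Hom(-\infty,\lambda)$ and $\Hom(\lambda,+\infty)$ for $\lambda\in\para$, agree on both sides. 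This reduces the problem to verifying the single identification $\Hom_{\rm pushout}(-\infty,+\infty)\xra{\simeq} \Hom_{\para^{\tl\tr}}(-\infty,+\infty)$, where the codomain is contractible by construction.

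The key step is to compute the mapping space $\Hom_{\rm pushout}(-\infty,+\infty)$ on the left-hand side. I would apply the main technical lemma of \cite{fibrations}, which provides a formula for mapping spaces in a pushout of $\infty$-categories along fully faithful functors: since the only zig-zags from $-\infty$ (only present in $\para^{\tl}$) to $+\infty$ (only present in $\para^{\tr}$) must pass through $\para$, the mapping space is equivalent to the (homotopy) colimit
\[
\Hom_{\rm pushout}(-\infty,+\infty)
~\simeq~
\colim_{\lambda \in \para}
\Bigl(
\Hom_{\para^{\tl}}(-\infty,\lambda)
\times
\Hom_{\para^{\tr}}(\lambda,+\infty)
\Bigr)
~.
\]
Since $-\infty$ is initial in $\para^{\tl}$ and $+\infty$ is final in $\para^{\tr}$, each factor in this colimit is contractible, so the colimit evaluates as $\sB\para$. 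Invoking Corollary~\ref{s2}, this classifying space is contractible, matching $\Hom_{\para^{\tl\tr}}(-\infty,+\infty)\simeq \ast$. Finally, one checks that the canonical comparison map induced by the universal property of the pushout realizes this equivalence, completing the verification that the functor is fully faithful, hence an equivalence.

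The main obstacle is the justification of the colimit formula for mapping spaces in the pushout, which requires the reference to \cite{fibrations}; this is the only non-formal input, and once invoked, the remainder of the argument reduces to combining universal properties with the sifted-ness of $\para$ established in Corollary~\ref{s2}.
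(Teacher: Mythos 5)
Your proposal is correct and is essentially the paper's own argument: the paper proves this lemma by combining Corollary~\ref{s2} (contractibility of $\sB\para$) with Observations~\ref{t51} and~\ref{t52}, which rest on Lemma~1.11 of~\cite{fibrations} and say precisely that the only mapping space at issue is $\Hom_{\rm pushout}(-\infty,+\infty)\simeq \sB\para$. One small imprecision: your displayed formula should be a coend $\Hom_{\para^{\tl}}(-\infty,\bullet)\underset{\para}\otimes\Hom_{\para^{\tr}}(\bullet,+\infty)$ rather than a plain colimit over $\lambda\in\para$ (the two factors have opposite variance), but since both factors are contractible this still evaluates to $\sB\para$ and the argument goes through unchanged.
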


\begin{observation}
\label{t52}
Let $\cD$ be an $(\oo,1)$-category.
Lemma~1.11 of~\cite{fibrations} implies the following statements, the first which implies the second.
\begin{enumerate}
\item
The canonical functor between $(\oo,1)$-categories,
\[
\cD^{\tl} 
\underset{\cD}
\amalg
\cD^{\tr}
\longrightarrow
\ast^{\tl} \underset{\ast} \amalg \ast^{\tr}
\xra{\simeq}
\ast^{\tl \tr}
~,
\]
is an exponentiable fibration.

\item
The canonical map among spaces, involving a coend over $\cD$,
\[
\sB \cD
\simeq
\ast
\underset{\cD}
\ot
\ast
=
\Hom_{\cD^{\tl} 
\underset{\cD}
\amalg
\cD^{\tr}}
( -\infty, \bullet)
\underset{\cD}
\bigotimes
\Hom_{\cD^{\tl} 
\underset{\cD}
\amalg
\cD^{\tr}}
( \bullet  , +\infty)
\xra{~\circ~}
\Hom_{\cD^{\tl} 
\underset{\cD}
\amalg
\cD^{\tr}}
( -\infty, +\infty)
~,
\]
is an equivalence.

\end{enumerate}

\end{observation}

%
%
%
%
%
%
%
%

%

\begin{proof}[Proof of Lemma~\ref{t53}]
Using that the classifying space of $\para$ is contractible (Corollary~\ref{s2}), Observation~\ref{t52} implies the result.  

\end{proof}

\subsection{The walking monad and comonad}
\label{sec.monad}

We review some basic categories of ordered sets and their relations to algebra.

\begin{definition}\label{d5}
\begin{itemize}
\item[~]

\item
The \bit{walking monad} is the monoidal category $\sO$ in which an object is a linearly ordered finite sets, a morphism is an order preserving map, and whose monoidal structure is join of linearly ordered sets.
The \bit{walking comonad} is the monoidal category $\sO^{\op}$.  

\item
$\sO_+$ is the left $\sO$-module category of linearly ordered finite sets with a maximum, whose morphisms are those order preserving maps that preserve maxima, and whose left $\sO$-module structure is join of linearly ordered sets.

\item
$\sO_-$ is the right $\sO$-module category of linearly ordered finite sets with a minimum, whose morphisms are those order preserving maps that preserve minima, and whose right $\sO$-module structure is join of linearly ordered sets.

\item
$\sO_{\pm}$ is the category of linearly ordered finite sets with a distinct minimum and maximum, whose morphisms are those order preserving maps that preserve extrema.  

\end{itemize}

\end{definition}

%

\begin{observation}\label{t32}
The canonical inclusion of the non-empty finite linearly ordered sets extends as an equivalence between $(\infty,1)$-categories:
\[
\bDelta^{\tl}
~\simeq~
\sO
\qquad\text{ and }\qquad
(\bDelta^{\op})^{\tr}
~\simeq~
\sO^{\op}
~.
\]
Reversing linear orders defines an equivalence between $\sO$-module categories,
\[
\sO_+ ~\simeq~ \sO_-^{\op}
~.
\]

\end{observation}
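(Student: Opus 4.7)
My plan is to prove each of the three equivalences by a direct argument unwinding the definitions; no step involves serious difficulty, which is consistent with the result being stated as an Observation.

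First, I would establish $\bDelta^{\tl} \simeq \sO$ by identifying the empty set $\emptyset \in \sO$ as an initial object: the empty function is the unique order-preserving map $\emptyset \to S$ for any $S$, and no non-empty linearly ordered set admits an order-preserving map into $\emptyset$. Moreover $\sO \smallsetminus \{\emptyset\} = \bDelta$ by definition. This exhibits $\sO$ as the result of freely adjoining an initial object to $\bDelta$, yielding the canonical equivalence $\bDelta^{\tl} \xra{\simeq} \sO$ that sends the cone-point to $\emptyset$. Passing to opposites and applying the standard identification $(\cK^{\tl})^{\op} \simeq (\cK^{\op})^{\tr}$ then yields the second equivalence $(\bDelta^{\op})^{\tr} \simeq \sO^{\op}$ immediately.

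For the third equivalence $\sO_+ \simeq \sO_-^{\op}$ as $\sO$-module categories, I would construct the order-reversal involution $R$ sending each linearly ordered finite set $S$ to the same underlying set $S^{\mathrm{rev}}$ equipped with the opposite order, and sending each order-preserving map to itself (which remains order-preserving after reversing orders on source and target). Since $R$ exchanges maxima with minima, it restricts to an equivalence of underlying categories $\sO_+ \xra{\simeq} \sO_-$. To upgrade this to an equivalence of $\sO$-module categories, the key identity is the anti-monoidality $R(A \vee B) = R(B) \vee R(A)$ for join, which intertwines the left $\sO$-action on $\sO_+$ with the right $\sO$-action on $\sO_-$; interpreting this right action as a left action via the $^{\op}$ superscript records the stated equivalence.

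The most care-requiring step is the last one: tracking the conversion between left and right $\sO$-actions effected by order-reversal, which is exactly what the $^{\op}$ notation is there to encode. No other step presents any real obstacle.
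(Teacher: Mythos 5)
Your handling of the first two equivalences is correct and is essentially the only argument there is: $\emptyset\in\sO$ is initial, no non-empty linear order admits a map to it, the full subcategory on non-empty objects is $\bDelta$, hence $\sO\simeq\bDelta^{\tl}$; passing to opposites gives $(\bDelta^{\op})^{\tr}\simeq\sO^{\op}$.

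The third step has a genuine gap. Order reversal as you define it (same underlying map, orders reversed on source and target) is a \emph{covariant} functor, and it yields an equivalence $\sO_+\simeq\sO_-$, not the asserted $\sO_+\simeq\sO_-^{\op}$. The superscript $\op$ here really is the opposite category, not a bookkeeping device for trading a right action for a left one: this is how the observation is used after Theorem~\ref{t7}, where $\sO_-^{\op}$ carries the literal opposite of the right $\sO$-action on $\sO_-$, i.e.\ a right action of $\sO^{\op}\simeq\un{\End}_{\Adj}(+)$, and that structure is transported to $\sO_+\simeq\un{\Hom}_{\Adj}(+,-)$ through the claimed equivalence. (Regarding a right $\sO$-module as a left module over the monoidally-reversed category introduces no $\op$ on the module category at all, so your closing sentence conflates two different operations.) What is needed is a \emph{contravariant} equivalence between $\sO_-$ and $\sO_+$, and it is not given by relabeling orders but by the Galois/adjoint duality: a minimum-preserving monotone map $f\colon S\to T$ of finite non-empty linear orders admits a right adjoint $f^{\rho}\colon T\to S$, $f^{\rho}(t)=\max\{s\in S: f(s)\le t\}$ (the set is non-empty because $f$ preserves minima), which preserves maxima; the assignment $S\mapsto S$, $f\mapsto f^{\rho}$ is contravariant, and the adjunction formula $f(s)\le t\Leftrightarrow s\le f^{\rho}(t)$ shows it is bijective on hom-sets, giving $\sO_-^{\op}\xra{~\simeq~}\sO_+$ (equivalently, this is a one-sided version of the interval duality of Observation~\ref{t12}). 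Your covariant reversal composed with this self-duality of $\sO_-$ does recover the statement, but the self-duality is precisely the ingredient your argument omits --- reversal alone cannot supply it, since it preserves the direction of morphisms. In fairness, the paper's phrase ``reversing linear orders'' invites your reading; but the statement as used later requires the adjoint duality, and the module compatibility should then be checked for $f\mapsto f^{\rho}$ (transporting the opposite right action, as in the remark after Theorem~\ref{t7}) rather than for your functor $R$.
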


\begin{observation}\label{t12}
The representable simplicial set $\Hom_{\bDelta}\bigl(-,[1]\bigr)\colon \bDelta^{\op} \to {\sf Sets}$ canonically factors through the forgetful functor $\sO_{\pm}\to {\sf Sets}$ as an equivalence between categories:
\[
\bdelta^{\op}
\xra{~ \simeq ~}
\sO_{\pm} 
~,\qquad
[p]\mapsto \Hom_{\bDelta}\bigl( [p] , [1] \bigr)
~.
\]

\end{observation}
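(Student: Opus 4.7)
The plan is to verify the factorization through $\sO_{\pm}$ directly, then prove the resulting functor $F\colon \bDelta^{\op}\to \sO_{\pm}$ is an equivalence by checking essential surjectivity and fully-faithfulness.

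For the factorization, I would first observe that every order-preserving map $f\colon[p]\to[1]$ is a threshold map $g_{j}$ (with $g_{j}(k)=0$ for $k<j$ and $g_{j}(k)=1$ for $k\geq j$), uniquely determined by a threshold $j\in\{0,\ldots,p+1\}$. Thus $\Hom_{\bDelta}([p],[1])$ is naturally linearly ordered of cardinality $p+2$, with the constant maps $g_{p+1}$ and $g_{0}$ as distinct pointwise extrema, exhibiting it canonically as an object of $\sO_{\pm}$. Precomposition $\alpha^{*}$ by any morphism $\alpha\colon [q]\to [p]$ in $\bDelta$ visibly preserves pointwise order and carries constants to constants, so defines a morphism in $\sO_{\pm}$. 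Functoriality is inherited from associativity of composition, so this yields a functor $F\colon\bDelta^{\op}\to\sO_{\pm}$ that recovers the representable presheaf upon post-composition with the forgetful functor to $\Sets$.

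Essential surjectivity is immediate from cardinality considerations: objects of $\sO_{\pm}$ are classified up to unique order- and extrema-preserving isomorphism by their cardinality (which must be $\geq 2$), and $F([n])$ has cardinality $n+2$, hitting every isomorphism class.

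For fully-faithfulness, I would combine a direct injectivity check with a cardinality match. Injectivity of $\alpha\mapsto \alpha^{*}$ is a threshold argument: given distinct $\alpha,\beta\colon[q]\to[p]$ with $\alpha(k)<\beta(k)$, picking any $j$ with $\alpha(k)<j\leq\beta(k)$ yields $\alpha^{*}(g_{j})(k)=0\neq 1=\beta^{*}(g_{j})(k)$. For cardinalities, both $|\Hom_{\bDelta}([q],[p])|$ (weakly increasing sequences of length $q+1$ in $\{0,\ldots,p\}$) and $|\Hom_{\sO_{\pm}}(F[p],F[q])|$ (order-preserving maps from the $p$-element interior of $F[p]$ into the $(q+2)$-element target $F[q]$, the extrema being forced) evaluate by stars-and-bars to $\binom{p+q+1}{q+1}=\binom{p+q+1}{p}$. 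An injection between finite sets of equal cardinality is a bijection, completing fully-faithfulness. The main bookkeeping point---not a serious obstacle, but worth flagging---is recognizing that on the $\sO_{\pm}$ side the interior elements of $F[p]$ are allowed to map anywhere in $F[q]$, including to its extrema, which is what makes the count agree with the unconstrained $\bDelta$-side count.
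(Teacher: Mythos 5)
Your proposal is correct and complete. The paper records this statement as an Observation with no written proof (it is the classical ``interval duality'' between $\bDelta$ and finite linear orders with preserved, distinct extrema), so there is no argument in the text to compare against; your threshold description of $\Hom_{\bDelta}\bigl([p],[1]\bigr)$, the check that precomposition preserves pointwise order and the constant maps, and the injectivity-plus-cardinality argument for fully-faithfulness (with the correct count $\binom{p+q+1}{q+1}=\binom{p+q+1}{p}$, and the correct bookkeeping that interior elements may land on extrema) constitute a valid elementary verification. The only alternative worth mentioning is that one can avoid the counting step by writing down the explicit inverse bijection---a morphism $F[p]\to F[q]$ in $\sO_{\pm}$ is the same datum, via the Galois-connection/adjoint-threshold correspondence, as an order-preserving map $[q]\to[p]$---but your injection-between-equinumerous-finite-sets argument is equally rigorous.
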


\begin{observation}
\label{t99}
For each $I \in \sO$, taking joins defines a functor:
\[
\sO^{\times I}
\longrightarrow
\sO_{/I}
~,\qquad
(J_i)_{i\in I}
\longmapsto 
\left(
\underset{i\in I} \bigstar J_i
\to 
\underset{i\in I} \bigstar \ast
= I
\right)
~.
\]
This functor is an equivalence, with inverse given by $(J\xra{f} I)\mapsto \left(  f^{-1}(i) \right)_{i\in I}$.

\end{observation}

The next result records the sense in which $\sO$ and $\sO^{\op}$ corepresent associative algebras and associative coalgebras, respectively.
\begin{prop}\label{t33}
In the monoidal category $\sO$, the final object $\ast \in \sO$ has a unique associative algebra structure; in the monoidal category $\sO^{\op}$, the initial object $\ast \in \sO^{\op}$ has a unique associative coalgebra structure.
For each monoidal $(\infty,1)$-category $\cY$, the resulting functors
\[
\Fun^{\cE_1}( \sO , \cY )
\xra{~\simeq~}
\Alg(\cY)
\qquad\text{ and }\qquad
\Fun^{\cE_1}( \sO^{\op} , \cY )
\xra{~\simeq~}
{\sf cAlg}(\cY)
~,\qquad
F\mapsto F(\ast)
~,
\]
are equivalences between $(\infty,1)$-categories.

\end{prop}

\begin{proof}
By taking categorical opposites, the first statement implies the second.  So we are reduced to proving the first statement. 

Consider the associative $\infty$-operad ${\sf Assoc}^{\ot} \to \bDelta^{\op}$ associated to the associative operad ${\sf Assoc}$.
The monoidal envelope ${\sf Env}({\sf Assoc})$ of this associative $\infty$-operad is the monoidal category presented as the coCartesian fibration:
\[
{\sf Env}({\sf Assoc})^{\ot}
~:=~
\left(
~
{\sf Assoc}^{\ot} \underset{\bDelta^{\op}} \times  \Ar^{\sf act}( \bDelta^{\op}) 
\xra{~\ev_t~}
\bDelta^{\op}
~
\right)
~,
\]
where $\Ar^{\sf act}(\bDelta^{\op}) \subset \Ar(\bDelta^{\op})$ is the full subcategory consisting of the active morphisms, and the fiber product is with respect to the functor $\Ar^{\sf act}(\bDelta^{\op}) \xra{\ev_s} \bDelta^{\op}$ given by evaluation at the source. 
By definition of ${\sf Assoc}^{\ot} \to \bDelta^{\op}$, for $[p]\in \bDelta$, there is a canonical identification fiber $\ev_t^{-1}([p]^\circ) \simeq \sO^{\times p}$ with the $p$-fold product of the category $\sO$.
Furthermore, with respect to such identifications, the coCartesian monodromoy functor over an active morphism in $\bDelta^{\op}$ is given by taking joins of finite linearly ordered sets.  
In particular, the underlying category of ${\sf Env}({\sf Assoc})$ is $\sO$, and its monoidal structure is given by join.
In other words, there is an equivalence between monoidal categories:
\[
{\sf Env}({\sf Assoc})
~\simeq~
\sO
~.
\]
The result then follows from the universal property of monoidal envelopes.

\end{proof}

\subsection{The walking adjunction}

\begin{definition}\label{d6}
The \bit{walking adjunction} is the $(\infty,2)$-category $\Adj$, characterized as receiving a morphism 
\[
c_1 \xra{~\bigl\lag -\xra{\sL}+ \bigr\rag~} \Adj
~,
\] 
with the following universal property:
\begin{enumerate}
\item 
$\sL\in \Mor(\Adj)$ admits a right adjoint;

\item
it is initial with respect to the above property~(1).

\end{enumerate}
In other words, for each $(\infty,2)$-category $\cC$, the evaluation map
\[
\ev_\sL\colon \Hom_{\Cat_2} \bigl( \Adj , \cC \bigr)
\xra{~\simeq~}
\Mor(\cC)^{\sf l.adj}
~,\qquad
F
\mapsto 
F(\sL)
~,
\]
is an equivalence to the subspace of those 1-morphisms in $\cC$ that are left adjoints.

\end{definition}

\begin{notation}
\begin{itemize}
\item[~]

\item
The morphism $c_1 \xra{\lag + \xra{\sR}-\rag} \Adj$ selects the right adjoint to $\sL$~.

\item
The morphism $c_2 \xra{\eta} \Adj$ selects the unit $\id_- \xra{\eta} \sR \circ \sL$~.

\item
The morphism $c_2 \xra{\epsilon} \Adj$ selects the counit $\sL \circ \sR \xra{\epsilon} \id_+$~.

\end{itemize}

\end{notation}

It is not obvious that such an $(\infty,2)$-category exists.
The work~\cite{riehl-verity}, after \cite{schanuel.street}, verifies the existence of $\Adj$, and they identify this $(\infty,2)$-category explicitly as a $(2,2)$-category.

\begin{theorem}[\cite{riehl-verity}, \cite{schanuel.street}]\label{t7}
The $(\infty,2)$-category $\Adj$ admits the following explicit description.
\begin{itemize}
\item
Its space of objects is canonically identified as
\[
\Obj(\Adj)
~\simeq~
\{-,+\}
~,
\]
a 0-type with two path-components.

\item
Its monoidal categories of endomorphisms of each of its two objects are canonically identified as the walking monad and the walking comonad:
\[
\un{\End}_{\Adj}(-)
\underset{\simeq}{\xla{~(\sR\circ \sL)^{\circ I} ~\mapsfrom~ I~}}
\sO
\qquad
\text{ and }
\qquad
\un{\End}_{\Adj}(+)
\underset{\simeq}{\xla{~(\sL\circ \sR)^{\circ I} ~\mapsfrom~ I~}}
\sO^{\op}
~.
\]

\item
Its $\bigl(\un{\End}_{\Adj}(-), \un{\End}_{\Adj}(+)\bigr)$-bimodule category of morphisms from $-$ to $+$, 
and its $\bigl(\un{\End}_{\Adj}(-), \un{\End}_{\Adj}(+)\bigr)$-bimodule category of morphisms from $+$ to $-$, are respectively
\[
\un{\Hom}_{\Adj}(-,+)
\underset{\simeq}{\xla{~\sL \circ (\sR \circ \sL)^{\circ J} ~\mapsfrom~ J^{\tl}~}}
\sO_-~ \simeq ~\sO_+^{\op}
\qquad
\text{ and }
\qquad
\un{\Hom}_{\Adj}(+,-)
\underset{\simeq}{\xla{~(\sR \circ \sL)^{\circ J} \circ \sR ~\mapsfrom~ J^{\tr}~}}
\sO_+ ~\simeq ~\sO_-^{\op}
~.
\]

\end{itemize}

\end{theorem}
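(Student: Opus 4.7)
The plan is to construct an explicit $(2,2)$-category $\cA$ with the asserted description, then verify the universal property defining $\Adj$. For the first step, I would take $\Obj(\cA) = \{-,+\}$ and define the hom-categories by the formulas in the statement: $\sO$, $\sO^{\op}$, $\sO_-$, and $\sO_+$, respectively. Composition is induced by join of linearly ordered finite sets: for instance, the pairing $\sO \times \sO_- \to \sO_-$ is left join, exhibiting $\sO_-$ as a left $\sO$-module in the sense of Definition~\ref{d5}. All associativity coherences then reduce to associativity of join.

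For the second step, I would exhibit the generating adjunction inside $\cA$. Take $\sL \in \sO_-$ to be the singleton (the unique $1$-element minimum-pointed set), and dually $\sR \in \sO_+$ the singleton. Their composite $\sR \circ \sL \in \sO$ is a $2$-element linearly ordered set; $\sL \circ \sR \in \sO^{\op}$ is the analogous $2$-element set. The unit $\eta\colon \id_- \to \sR \circ \sL$ is the unique order-preserving map from the empty ordered set (which represents $\id_-$ under the identification $\bDelta^{\tl}\simeq \sO$ of Observation~\ref{t32}) into the $2$-element set; the counit $\epsilon$ is defined dually in $\sO^{\op}$. The triangle identities reduce to combinatorial identities about join with the empty set.

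For the third step, the universal property, I would argue as follows. Given an $(\infty,2)$-category $\cC$ and a 1-morphism $F\colon x\to y$ with chosen right adjoint $(G,\eta,\epsilon)$, the monad $GF$ determines a monoidal functor $\sO \to \un{\End}_\cC(x)$ and the comonad $FG$ determines a monoidal functor $\sO^{\op} \to \un{\End}_\cC(y)$, both by Observation~\ref{t33}. The free-module properties of Observation~\ref{s5} then mean that $F$ extends uniquely to a map of left $\sO$-modules $\sO_- \to \un{\Hom}_\cC(x,y)$, and $G$ to a right $\sO$-module map $\sO_+ \to \un{\Hom}_\cC(y,x)$. What remains is to check that the cross compositions $\sO_- \times \sO_+ \to \sO$ and $\sO_+ \times \sO_- \to \sO^{\op}$ are respected; this is precisely the content of the triangle identities.

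The main obstacle is the uniqueness clause of the universal property: the preceding universal properties determine the functor on each hom-category up to contractible choice, but one must verify that no additional relations are imposed beyond the triangle identities. The cleanest route, pursued by Schanuel--Street and upgraded to the $(\infty,2)$-setting by Riehl--Verity, is a normal-form argument: every 1-cell $-\to+$ in the free such $2$-category reduces to an alternating word $\sL\circ(\sR\circ\sL)^{\circ n}$, and every 2-cell is determined by an order-preserving map capturing which pairs of $\eta,\epsilon$ are canceled via zigzag. Matching these normal forms against $\sO_-$ (and dually $\sO_+$, $\sO$, $\sO^{\op}$) is the combinatorial verification occupying most of the cited references.
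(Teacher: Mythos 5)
The paper offers no proof of this statement: Theorem~\ref{t7} is quoted from Schanuel--Street and Riehl--Verity, so there is no in-paper argument to compare yours against, and your final paragraph ultimately defers the hard content (normal forms, and the homotopy-coherent upgrade) to those same references. Judged on its own terms, though, your construction step has a genuine gap: composition in $\Adj$ is \emph{not} induced by plain join. Join does give the actions of $\sO$ on $\sO_-$ and $\sO_+$, but it cannot give the actions of $\sO^{\op}$ nor the cross compositions: a morphism of $\sO^{\op}$ points the wrong way to induce a join map covariantly, and, as the Remark following Theorem~\ref{t7} records, these composites are only defined through the interval duality $\sO_+\simeq\sO_-^{\op}$ of Observation~\ref{t32}, i.e.\ by joining and then amalgamating the two marked points. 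Relatedly, your identification of $\sR\circ\sL$ with a two-element ordered set is inconsistent with the theorem's parametrization $I\mapsto(\sR\circ\sL)^{\circ I}$, under which $\sR\circ\sL$ is the singleton --- the terminal object of $\sO$, in accordance with Observation~\ref{t33} that the walking algebra is the terminal object. With plain join as cross composition, the alternating words $\sL\circ(\sR\circ\sL)^{\circ J}$ reach only odd-cardinality objects of $\sO_-$, so the resulting structure would not be generated by $\sL$ and $\sR$ and could not be the walking adjunction; the collapse in the amalgamated join is exactly where the unit, counit and triangle identities are encoded, so it cannot be postponed to a later check that ``the cross compositions are respected.''

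Even after correcting the $(2,2)$-categorical model, the substantive content of the theorem in this paper's setting is the $(\infty,2)$-categorical universal property of Definition~\ref{d6}: that a $1$-morphism with a chosen right adjoint in an arbitrary $(\infty,2)$-category $\cC$ extends to a functor $\Adj\to\cC$ essentially uniquely, with a contractible space of extensions. Your argument via Observations~\ref{t33} and~\ref{s5} pins down the extension on each hom-category separately, but the coherence of these choices across compositions --- the absence of higher obstructions --- is precisely the theorem of Riehl--Verity, and acknowledging it as ``the combinatorial verification occupying most of the cited references'' leaves your proposal as a reconstruction of the statement plus a citation, which is in effect what the paper itself does, rather than a proof.
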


\begin{remark}
Biasing the identification $\un{\Hom}_{\Adj}(+,-)\simeq\sO_+$, then the composition functor
\[
\un{\End}_{\Adj}(-)
\times
\un{\Hom}_{\Adj}(+,-)
\xra{~\circ~} 
\un{\Hom}_{\Adj}(+,-)
\]
is equivalent to the left action $\sO\times\sO_+\ra \sO_+$. 
The composition functor
\[
 \un{\Hom}_{\Adj}(+,-)\times\un{\End}_{\Adj}(+) \longrightarrow \un{\Hom}_{\Adj}(+,-)
\]
factors through the right action
\[
\sO_+\times\sO^{\op} 
~\simeq~
\sO_-^{\op}\times\sO^{\op} \longrightarrow 
\sO_-^{\op} ~\simeq~ 
\sO_+
\]
combined with the equivalence $\sO_+\simeq \sO_-^{\op}$ of Observation~\ref{t32}.
\end{remark}

\begin{observation}\label{s6}
In particular, there are fully-faithful functors between $(\infty,2)$-categories,
\[
\fB \sO
~\hookrightarrow~
\Adj
~\hookleftarrow~
\fB\sO^{\op}
~,
\]
and the resulting functor from the coproduct
\[
\fB \sO
\amalg
\fB \sO^{\op}
\longrightarrow
\Adj
\] 
is a monomorphism that is surjective on spaces of objects.  

\end{observation}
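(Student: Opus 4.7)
The plan is to read everything off Theorem~\ref{t7}'s explicit description of $\Adj$, by identifying $\fB\sO$ and $\fB\sO^{\op}$ with two disjoint full $(\infty,2)$-subcategories of $\Adj$.

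First, I would construct the two inclusions by taking full $(\infty,2)$-subcategories on single objects. Let $\Adj^-\subset\Adj$ be the full $(\infty,2)$-subcategory spanned by $-\in\Obj(\Adj)$, and $\Adj^+\subset\Adj$ the full $(\infty,2)$-subcategory spanned by $+$. Any $(\infty,2)$-category with a single object is canonically the deloop of its endomorphism monoidal $(\infty,1)$-category. Combined with the identifications $\un{\End}_{\Adj}(-)\simeq\sO$ and $\un{\End}_{\Adj}(+)\simeq\sO^{\op}$ from Theorem~\ref{t7}, this yields canonical equivalences $\Adj^-\simeq \fB\sO$ and $\Adj^+\simeq \fB\sO^{\op}$ over $\Adj$, hence the two fully-faithful functors $\fB\sO\hookrightarrow\Adj\hookleftarrow\fB\sO^{\op}$.

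Next I would verify surjectivity on spaces of objects: the object-space of $\fB\sO\amalg\fB\sO^{\op}$ is a two-point set that maps bijectively onto $\Obj(\Adj)\simeq\{-,+\}$ by construction.

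Finally, I would verify the coproduct functor is a monomorphism. A functor between $(\infty,2)$-categories is a monomorphism provided it is injective on equivalence classes of objects and induces a monomorphism on each mapping $(\infty,1)$-category. Object-injectivity is exactly the previous step. On mapping categories: for a pair of objects in the same summand, the induced map is the restriction of one of the individual fully-faithful inclusions, hence an equivalence; for a pair of objects in distinct summands, the source mapping category is empty because the coproduct in $\Cat_2$ creates no morphisms between the two summands, so the map to the corresponding non-empty $\un{\Hom}_{\Adj}(\pm,\mp)$ (which Theorem~\ref{t7} identifies with $\sO_{\mp}$) is vacuously a monomorphism. I do not anticipate a serious obstacle, since Theorem~\ref{t7} has already carried out the hard combinatorial identification of $\Adj$; the only point requiring slight care is to confirm that the coproduct in $\Cat_2$ behaves as expected on hom-categories, which is standard.
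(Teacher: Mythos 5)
Your route is the same as the paper's: Observation~\ref{s6} carries no separate proof, being read off directly from the explicit description of $\Adj$ in Theorem~\ref{t7}, and your realization of $\fB\sO$ and $\fB\sO^{\op}$ as the full sub-$(\infty,2)$-categories of $\Adj$ spanned by $-$ and by $+$ (via the delooping equivalence for one-object $(\infty,2)$-categories) is exactly how the paper uses these inclusions, e.g.\ in~(\ref{e42}). One caution: the general monomorphism criterion you invoke --- injectivity on equivalence classes of objects together with monomorphisms on mapping $(\infty,1)$-categories --- is not sufficient as stated. For a nontrivial discrete group $G$, the functor $\ast \to \sB G$ satisfies both conditions, yet $\ast \times_{\sB G} \ast \simeq G$ shows it is not a monomorphism; the correct condition at the object level is that the map of \emph{spaces} of objects be a monomorphism of spaces, so self-equivalences of objects must also be controlled. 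In the case at hand this costs nothing: by Theorem~\ref{t7} the space $\Obj(\Adj)\simeq\{-,+\}$ is a $0$-type, and the map from $\Obj\bigl(\fB\sO\amalg\fB\sO^{\op}\bigr)\simeq \ast\amalg\ast$ is a bijection of sets, hence a monomorphism (indeed an equivalence) of spaces --- which is what your surjectivity step actually verifies. With that substitution, your remaining check on mapping categories (equivalences within each summand, empty source categories between summands, using that coproducts in $\Cat_2$ are disjoint unions) completes the argument.
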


Theorem~\ref{t7} lends to a canonical monad and comonad associated to an adjunction, which we record as the following.
\begin{cor}\label{t18}
Let $\Adj \xra{\lag c_- \xra{\sL} c_+\rag } \cC$ select a left adjoint 1-morphism in an $(\infty,2)$-category.  
The composite functor
\begin{equation}\label{e30}
\fB \sO
=
\fB \un{\End}_{\Adj}(-)
\underset{\rm Thm~\ref{t7}}{~\hookrightarrow~}
\Adj
\longrightarrow
\cC
\end{equation}
selects an associative algebra in the monoidal $(\infty,1)$-category $\End_\cC(c_-)$;
the composite functor
\begin{equation}\label{e31}
\fB \sO^{\op}
=
\fB \un{\End}_{\Adj}(+)
\underset{\rm Thm~\ref{t7}}{~\hookrightarrow~}
\Adj
\longrightarrow
\cC
\end{equation}
selects an associative coalgebra in the monoidal $(\infty,1)$-category $\End_\cC(c_+)$.
\\

\end{cor}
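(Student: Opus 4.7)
The plan is to reduce the claim directly to Observation~\ref{t33}, which identifies monoidal functors from $\sO$ (respectively, $\sO^{\op}$) with associative algebras (respectively, coalgebras) in the target monoidal $(\infty,1)$-category. The essential input is Theorem~\ref{t7}, which provides the monoidal equivalences $\sO \simeq \un{\End}_{\Adj}(-)$ and $\sO^{\op} \simeq \un{\End}_{\Adj}(+)$. Together these will let us unwind the composites~(\ref{e30}) and~(\ref{e31}), after delooping, as monoidal functors into the appropriate endomorphism monoidal categories of $c_\pm \in \cC$.

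For the monad statement~(\ref{e30}), the first step is to observe that the fully-faithful inclusion $\fB \un{\End}_{\Adj}(-) \hookrightarrow \Adj$ is, by construction of the delooping, the inclusion of the full sub-category-object whose unique object is $- \in \Adj$. Composing with the chosen morphism $\Adj \to \cC$ yields a morphism of category-objects internal to $(\infty,1)$-categories whose underlying map on spaces of objects is constant at $c_-$. Such a morphism factors uniquely through $\fB \un{\End}_\cC(c_-) \subset \cC$, and this factorization is adjoint to a monoidal functor $\un{\End}_{\Adj}(-) \to \un{\End}_\cC(c_-)$ between monoidal $(\infty,1)$-categories. Precomposing with the monoidal equivalence $\sO \simeq \un{\End}_{\Adj}(-)$ from Theorem~\ref{t7} and then invoking Observation~\ref{t33} yields the sought associative algebra in $\un{\End}_\cC(c_-)$.

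The comonad statement~(\ref{e31}) is formally dual: one repeats the argument with $\un{\End}_{\Adj}(+)$ and $c_+$ in place of $\un{\End}_{\Adj}(-)$ and $c_-$, using the monoidal equivalence $\sO^{\op} \simeq \un{\End}_{\Adj}(+)$ from Theorem~\ref{t7}, and then applies the second clause of Observation~\ref{t33}, which converts a monoidal functor out of $\sO^{\op}$ into an associative coalgebra. No substantive obstacle is expected: the content of Corollary~\ref{t18} is essentially a packaging of the dictionary between fully-faithful morphisms of $(\infty,2)$-categories with single-object image and monoidal functors between the corresponding endomorphism monoidal $(\infty,1)$-categories, and the conceptual input lies entirely in Theorem~\ref{t7} and Observation~\ref{t33}.
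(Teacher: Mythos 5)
Your argument is correct and is precisely the reasoning the paper intends: the paper records Corollary~\ref{t18} without proof, as an immediate consequence of Theorem~\ref{t7}, and your unwinding -- a functor out of the delooping $\fB\un{\End}_{\Adj}(\mp)$ landing on $c_\mp$ factors through $\fB\un{\End}_{\cC}(c_\mp)$, hence is adjoint to a monoidal functor out of $\sO$ (resp.\ $\sO^{\op}$), which Observation~\ref{t33} converts into an associative algebra (resp.\ coalgebra) -- is exactly the standard dictionary being invoked. No gaps; your write-up simply makes explicit what the paper leaves implicit.
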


\subsection{The walking dual}\label{sec.dual}

\begin{definition}\label{d17}
The \bit{walking dual} is the monoidal $(\infty,1)$-category $\Dual$, equipped with an object $\sL \in \Dual$, with the following universal property:
\begin{enumerate}
\item 
$\sL\in \Dual$ admits a right dual;

\item
it is initial with respect to the above property~(1).

\end{enumerate}
In other words, for each monoidal $(\infty,1)$-category $\cV$, evaluation map,
\[
\ev_\sL\colon \Hom_{\Alg(\Cat_1)} \bigl( \Dual , \cV \bigr)
\xra{~\simeq~}
\Obj(\cV)^{\sf l.dual}
~,\qquad
F
\mapsto 
F(\sL)
~,
\]
is an equivalence to the subspace of those objects in $\cV$ that are left duals.  

\end{definition}

\begin{remark}
Here are two ways of establishing the existence of $\Dual$.
One way is to observe that the functor
\[
\Obj\bigl( -\bigr)^{\sf l.dual}\colon
\Alg(\Cat_1)
\longrightarrow
\Spaces
\]
preserves limits and sufficiently filtered colimits.
Presentability of $\Alg(\Cat_1)$ thereafter ensures the existence of a corepresenting object.  
Alternatively, 
\[
\Dual
~\simeq~
\un{\End}_{\ast \underset{\Obj(\Adj)}\amalg \Adj }(\ast)
\]
is the monoidal $(\infty,1)$-category of endomorphisms in the pushout
\[
\ast \underset{\Obj(\Adj)}\amalg \Adj
\]
of the unique object $\ast \simeq \Obj\Bigl( \ast \underset{\Obj(\Adj)}\amalg \Adj \Bigr)$.

\end{remark}

Consider the canonical morphism between category-objects internal to $(\infty,1)$-categories,
\begin{equation}\label{e15}
\Adj
\longrightarrow
\fB \Dual
~,
\end{equation}
sending the left adjoint $\sL\in \Mor(\Adj)$ to the left dual $\sL\in \Obj(\Dual)$.  
This morphism has the following universal property.  
\begin{observation}\label{t15}
Let $\cV$ be a monoidal $(\infty,1)$-category.
Each solid diagram among category-objects internal to $(\infty,1)$-categories,
\[
\xymatrix{
\Adj \ar[rrrr]^-{ \lag V  \text{ (with right dual)}  \rag } \ar[d]_-{(\ref{e15})}
&&
&&
\fB \cV
\\
\fB \Dual \ar@{-->}[urrrr]_-{\lag V \rag}
&&
&&
~,
}
\]
admits a unique filler.
In other words, restriction along~(\ref{e15}) defines an equivalence between spaces of morphisms
\[
\Obj(\cV)^{\sf l.dual}
\xla{~\simeq~}
\Hom_{\Alg(\Cat_1)}\bigl( \Dual , \cV \bigr)
\xra{~\simeq~}
\Hom_{\fCat_2}\bigl( \Adj , \fB \cV \bigr)
~.
\\
\]

\end{observation}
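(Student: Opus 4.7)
The plan is to identify each side of the restriction map with the space $\Obj(\cV)^{\sf l.dual}$ of left-dualizable objects of $\cV$, using the defining universal properties of $\Dual$ and $\Adj$, and then to check that restriction along~(\ref{e15}) respects these identifications. By Definition~\ref{d17}, evaluation at the distinguished object $\sL\in\Dual$ yields the equivalence $\ev_\sL\colon \Hom_{\Alg(\Cat_1)}(\Dual,\cV) \xra{\simeq} \Obj(\cV)^{\sf l.dual}$. By Definition~\ref{d6}, evaluation at the distinguished left-adjoint 1-morphism $\sL\in\Mor(\Adj)$ yields the equivalence $\ev_\sL\colon \Hom_{\fCat_2}(\Adj,\fB\cV) \xra{\simeq} \Mor(\fB\cV)^{\sf l.adj}$. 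It therefore suffices to match $\Mor(\fB\cV)^{\sf l.adj}$ with $\Obj(\cV)^{\sf l.dual}$, compatibly with precomposition along~(\ref{e15}).

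The substantive step is the identification $\Mor(\fB\cV)^{\sf l.adj}\simeq \Obj(\cV)^{\sf l.dual}$. In the delooping $\fB\cV$ there is a unique object, whose endomorphism monoidal $(\infty,1)$-category is $\cV$ itself: composition of 1-morphisms is tensor product in $\cV$, and the identity 1-morphism is the monoidal unit $\uno\in\cV$. Unwinding, a right adjoint in $\fCat_2$ to a 1-morphism $\sL\in\cV$ is the data of an object $\sR\in\cV$ together with unit and counit 2-cells $\uno \xra{\eta} \sR\ot \sL$ and $\sL\ot \sR \xra{\epsilon} \uno$ satisfying the triangle identities---precisely the data exhibiting $\sL$ as a left dual in the monoidal $(\infty,1)$-category $\cV$.

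For compatibility, recall that the morphism~(\ref{e15}) is constructed so as to carry the universal left-adjoint 1-morphism $\sL\in\Mor(\Adj)$ to the universal left-dual object $\sL\in\Obj(\Dual)$, the latter read as a 1-morphism of $\fB\Dual$. Consequently restriction along~(\ref{e15}) sends a monoidal functor $\Dual\to \cV$ representing a left-dualizable object $V$ to the functor $\Adj\to\fB\cV$ representing the same $V$ as a left-adjoint 1-morphism. Thus the two equivalences above compose to the restriction map, which is therefore itself an equivalence.

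The main obstacle is the middle-paragraph identification at full $(\infty,2)$-categorical coherence. Whereas the correspondence between adjunctions in a one-object $(\infty,2)$-category and duals in the underlying monoidal $(\infty,1)$-category is classical for strict 2-categories, in the $\infty$-setting it requires that the homotopy-coherent unit, counit, and triangle-identity data assemble correctly; the explicit presentation of $\Adj$ recalled in Theorem~\ref{t7}, which exhibits its hom-categories in terms of the walking monad $\sO$, the walking comonad $\sO^{\op}$, and their bimodules $\sO_\pm$, encodes exactly the coherence needed to match the two universal properties.
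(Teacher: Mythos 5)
Your argument is correct and is essentially the justification the paper intends: the paper states this as an unproved Observation, resting implicitly on the corepresenting universal properties of $\Dual$ (Definition~\ref{d17}) and of $\Adj$ (Definition~\ref{d6}) together with the standard identification of 1-morphisms admitting right adjoints in the delooping $\fB\cV$ with objects of $\cV$ admitting right duals, and the compatibility coming from~(\ref{e15}) carrying $\sL$ to $\sL$. Your write-up makes exactly this chain explicit, including the correct flag that the adjunctions-in-$\fB\cV$ versus duals-in-$\cV$ comparison is the only substantive point.
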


\begin{notation}\label{d7}
Let $\Dual \xra{\lag V \rag} \cV$ select an object $V\in \cV$ in a monoidal $(\infty,1)$-category that admits a right dual, $V^\vee$.  
We denote the associative algebra~(\ref{e31}) in $\cV$ and the associative coalgera~(\ref{e31}) in $\cV$, respectively, as their underlying objects in $\cV$:
\[
\un{\End}^{\sf alg}_{\cV}(V)
~\simeq~
V^\vee \ot V
\qquad
\text{ and }
\qquad
\un{\End}^{\sf coalg}_{\cV}(V)
~\simeq~
V \ot V^\vee
~.
\]
Should the monoidal structure on $\cV$ be restricted from a symmetric monoidal structure, then we simply denote both this associative algebra and this associative coalgebra as
\[
V^\vee \ot V
~\simeq~
\un{\End}_{\cV}(V)
~\simeq~
V \ot V^\vee
~.
\]

\end{notation}

\begin{remark}
Let $\bfX$ be a symmetric monoidal $(\infty,1)$-category.
The simultaneous algebra and coalgebra structure on $\un{\End}_{\bfX}(V)\in \bfX$ are compatible in the sense that $\un{\End}_{\bfX}(V)$ has the structure of a Frobenius algebra.   
This is structure is expected because $\un{\End}_{\bfX}(V)\in \cV$ is the value of a 1-dimensional 1-framed TQFT on the standardly 1-framed 0-sphere, $\SS^0$.  

\end{remark}

\begin{observation}\label{t19}
Let $\cV$ be a monoidal $(\infty,1)$-category.
The right-lax monoidal functor
$
\cV
\xra{\Hom_{\cV}(\uno , - )}
\Spaces
$
determines a functor between $(\infty,1)$-categories of associative algebras
$
\Alg(\cV)
\to
\Alg(\Spaces)
$
which carries the associative algebra $\un{\End}^{\sf alg}_{\cV}(V)$ to $\End_{\cV}(V)$.
Dually, the right-lax monoidal functor
$
\cV^{\op}
\xra{\Hom_{\cV}(-,\uno)}
\Spaces
$
determines a functor between $(\infty,1)$-categories
$
{\sf cAlg}(\cV)
\to
\Alg(\Spaces)
$
which carries the associative coalgebra $\un{\End}^{\sf coalg}_{\cV}(V)$ to $\End_{\cV}(V^\vee)$.
\\

\end{observation}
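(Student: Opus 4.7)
The plan is to reduce the two halves to one another by passing to the opposite monoidal $\infty$-category, and then establish the algebra statement via the universal property of the walking adjunction. The coalgebra statement is the algebra statement applied to $\cV^{\op}$: an algebra in $\cV^{\op}$ is a coalgebra in $\cV$; an object $V\in\cV$ with right dual $V^\vee$ corresponds to $V^\vee\in\cV^{\op}$ with right dual $V$, with the duality data $\eta$ and $\epsilon$ swapping roles; the algebra $V^\vee\otimes V\in\cV^{\op}$ becomes the coalgebra $V\otimes V^\vee\in\cV$; and the right-lax monoidal functor $\Hom_{\cV^{\op}}(\uno,-)$ coincides with $\Hom_\cV(-,\uno)$.

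First, I verify that $\Hom_\cV(\uno,-)\colon\cV\to\Spaces$ is canonically right-lax monoidal, with lax structure maps
\[
\Hom_\cV(\uno, X) \times \Hom_\cV(\uno, Y) \xra{~\otimes~} \Hom_\cV(\uno \otimes \uno, X \otimes Y) \xla{~\simeq~} \Hom_\cV(\uno, X \otimes Y)
\]
induced by the unit equivalence $\uno\simeq\uno\otimes\uno$. Equivalently, $\uno$ carries a canonical $\cE_1$-coalgebra structure in $\cV$ whose corepresentable functor is right-lax monoidal, inducing a functor $\Alg(\cV)\to\Alg(\Spaces)$.

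On underlying spaces, the duality data $\eta\colon\uno\to V^\vee\otimes V$ and $\epsilon\colon V\otimes V^\vee\to\uno$ provide an adjunction $(V\otimes -)\dashv(V^\vee\otimes -)$, whence a natural equivalence
\[
\End_\cV(V)\xra{~\simeq~}\Hom_\cV(\uno, V^\vee\otimes V),\qquad \phi\mapsto(\id_{V^\vee}\otimes\phi)\circ\eta.
\]
Compatibility of multiplications reduces, for $\phi,\psi\in\End_\cV(V)$, to the identification
\[
(\id_{V^\vee}\otimes\epsilon\otimes\id_V)\circ\bigl(((\id_{V^\vee}\otimes\phi)\circ\eta)\otimes((\id_{V^\vee}\otimes\psi)\circ\eta)\bigr)\;\simeq\;(\id_{V^\vee}\otimes(\psi\circ\phi))\circ\eta,
\]
which follows from the zig-zag (triangle) identities of the duality.

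The main obstacle is to promote this compatibility to a homotopy-coherent identification of $\infty$-categorical algebras. For this I invoke the universal property of $\Adj$ (Observation~\ref{t15}): the data of $V$ with its duality is encoded as a morphism $\lag V\rag\colon\Adj\to\fB\cV$ of category-objects internal to $(\infty,1)$-categories. Restricting along the fully-faithful inclusion $\fB\un{\End}_{\Adj}(-)\hookrightarrow\Adj$ of Theorem~\ref{t7} and invoking the identification $\Fun^{\cE_1}(\sO,\cV)\simeq\Alg(\cV)$ of Observation~\ref{t33} exhibits the algebra $V^\vee\otimes V$ universally as a monoidal functor $\sO\to\cV$. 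Post-composing with the right-lax monoidal $\Hom_\cV(\uno,-)$ from the first step yields a monoidal functor $\sO\to\Spaces$; by the zig-zag computation above, the associative algebra in $\Spaces$ that it classifies is $\End_\cV(V)$ with composition as multiplication. This proves the algebra statement, and the coalgebra statement follows by the opposite-category reduction of the opening paragraph.
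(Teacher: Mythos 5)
Your space-level work is correct: the equivalence $\End_\cV(V)\simeq\Hom_\cV(\uno,V^\vee\ot V)$, $\phi\mapsto(\id_{V^\vee}\ot\phi)\circ\eta$, the binary compatibility via the triangle identities, and the reduction of the coalgebra half to the algebra half by passing to $\cV^{\op}$ are all sound (one small slip in the latter: the algebra in $\cV^{\op}$ attached to $V^\vee$ is $V\ot V^\vee$, the same underlying object as the coalgebra in $\cV$ -- underlying objects do not change under taking opposites). The paper records this statement as an unproved Observation, so there is no argument of the authors' to compare against; but your attempt does not actually close the coherence issue that you yourself identify as the main obstacle.

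Concretely, the final step fails as written. Composing the monoidal functor $\sO\to\cV$ classifying $V^\vee\ot V$ with the right-lax monoidal $\Hom_\cV(\uno,-)$ yields only a right-lax monoidal functor $\sO\to\Spaces$, not a monoidal one, so Observation~\ref{t33} does not classify an algebra from it; what this composite encodes is precisely the algebra $\Hom_\cV(\uno,V^\vee\ot V)\in\Alg(\Spaces)$, i.e.\ the image of $V^\vee\ot V$ under the induced functor $\Alg(\cV)\to\Alg(\Spaces)$ -- which merely restates what must be identified with $\End_\cV(V)$. That identification is then justified only ``by the zig-zag computation above'', which checks the binary multiplication up to homotopy; it does not produce an equivalence of associative algebras in $\Spaces$, i.e.\ a coherent ($A_\infty$-level) identification, and the detour through $\Adj$ contributes nothing beyond re-deriving the algebra structure on $V^\vee\ot V$ that the paper already has (Corollary~\ref{t18}). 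What is missing is a coherent mechanism relating the two algebra structures: for instance, use the adjunction $V\ot-\dashv V^\vee\ot-$ furnished by $(\eta,\epsilon)$ to produce a natural equivalence $\Hom_\cV(X,V^\vee\ot V)\simeq\Hom_\cV(V\ot X,V)$, exhibiting $V^\vee\ot V$ as an endomorphism object of $V$ for the evident tensoring of $\cV$ on itself; the theory of endomorphism objects (e.g.\ \cite{HA}, \S4.7.1, or the enriched machinery of \cite{enriched1}) then supplies both the algebra structure and a canonical equivalence of algebras $\Hom_\cV(\uno,\un{\End}(V))\simeq\End_\cV(V)$, after which one checks that this structure agrees with the monad-from-adjunction structure of Notation~\ref{d7}. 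Some argument of this kind -- or an equivalently coherent one via module or Kleisli-type constructions -- is needed at the point where you currently assert the conclusion.
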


\section{Factorization homology}\label{sec.recall.fact}
We first recall some key definitions and constructions concerning factorization homology, as developed in~\cite{fact1} and~\cite{circle}.  
These recollections are not intended to be comprehensive, and we refer a reader to those sources for definitions and in-depth discussion.
We then supply minor technical results that are used in the body of the article above.

\begin{convention}\label{d20}
In this section, $\cV$ is an $(\oo,1)$-category that admits finite limits and geometric realizations.
\end{convention}

\subsection{Recollections of factorization homology}\label{sec.fact.beta}

The $(\oo,1)$-category of \bit{category-objects internal to $\cV$} is the full $\infty$-subcategory
\[
\fCat_1[\cV]
~\subset~
\Fun(\bDelta^{\op} , \cV)
~
\]
consisting of those functors $\bDelta^{\op}\xra{\cC} \cV$ that satisfy the \bit{Segal conditions}, which is to say that the restriction along the monomorphism $\bDelta^{\sf inrt}\hookrightarrow \bDelta$ from the subcategory of \bit{inert} morphisms,
\[
(\bDelta^{\sf inrt})^{\op}
~\hookrightarrow~
\bDelta^{\op}
\xra{~\cC~}
\cV
~,
\]
carries (the opposites of) colimit diagrams to limit diagrams.\footnote{This notation is carried over from~\cite{flagged}, where it is proved that the $(\oo,1)$-category $\fCat_1[\Spaces]$ consists of \bit{flagged $(\infty,1)$-categories}.}

The $(\oo,1)$-category $\bcM:=\cMfd_1^{\sfr}$ is that of compact solidly 1-framed stratified spaces.
Its full $\infty$-subcategory 
\[
\delta\colon
\bcD
~\subset~
\bcM
\]
consists of those objects that are \bit{disk-stratified}: each connected stratum is isomorphic to a Euclidean space.
Refinement maps between underlying stratified spaces supply a class of morphisms in $\bcM$, which are referred to as \emph{refinements}.  
There is a canonical fully-faithful functor
\[
\rho
\colon 
\bDelta^{\op}
\longrightarrow
\bcD
\]
whose image consists of those objects that are refinements of the solidly 1-framed closed disks $\DD^1$ or $\DD^0$.  
\bit{Factorization homology} is right Kan extension along $\rho$ followed by left Kan extension along $\delta$:
\begin{equation}\label{f5}
\displaystyle \int
\colon
\fCat_1[\cV]
\xra{~\rho_\ast~}
\Fun(\bcD , \cV)
\xra{~\delta_!~}
\Fun(\bcM , \cV)
~,\qquad
\cC
\mapsto
\Bigl(
M
\mapsto 
\int_M \cC
\Bigr)
~.
\end{equation}
The following result summarizes some key structural features of factorization homology (see~\cite{circle} for an explanation of the bolded terms).
\begin{theorem}[\cite{fact1}, \cite{circle}]
\label{t4}
Suppose $\cV$ admits finite limits and sifted colimits, and that sifted colimits distribute over products.
Then the Kan extensions~(\ref{f5}) exist.  
Furthermore, for each $M\in \bcM$, the functor $\int_M \colon \fCat_1[\cV] \to \cV$ preserves products.
Lastly, for each $\cC\in \fCat_1[\cV]$, the functor $\int \cC \colon \bcM \to \cV$ carries \bit{closed covers} to limit diagrams, and satisfies \bit{excision}.
\end{theorem}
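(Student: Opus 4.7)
The plan is to verify each of the four assertions in turn, reducing them by the definition~(\ref{f5}) to separate analyses of the right Kan extension $\rho_\ast$ and the left Kan extension $\delta_!$.

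First, for the existence of the Kan extensions, I would use the pointwise formulas. For $\rho_\ast \cC$ at an object $D\in \bcD$, the value is a limit indexed by the $\oo$-undercategory $\bDelta^{\op}_{D/}$; the Segal condition on $\cC$ together with an analysis of disk-refinements of $D$ lets one identify a final small sub-diagram that is a finite limit in $\cV$, so the limit exists because $\cV$ has finite limits. For $\delta_! \rho_\ast \cC$ at an object $M\in \bcM$, the value is a colimit indexed by the $\oo$-overcategory $\bcD_{/M}$. The key geometric input I would use is that $\bcD_{/M}$ is sifted (its classifying space is contractible and finite products exist), so the colimit is indexed by a sifted diagram and thus exists because $\cV$ has sifted colimits.

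Second, product preservation would follow formally from the two points just noted. The right Kan extension $\rho_\ast$ trivially preserves products because limits commute with limits. The left Kan extension along $\delta$ is a sifted colimit of the value-wise product $(\rho_\ast \cC \times \rho_\ast \cC')(D) \simeq (\rho_\ast \cC)(D) \times (\rho_\ast \cC')(D)$, and sifted colimits preserve finite products in any $\oo$-category admitting both.

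Third, for the closed-cover and excision properties (the geometric content), I would proceed manifold-theoretically. Given a closed cover $M = M_0 \cup M_1$ of a compact solidly 1-framed stratified space, inspection of disk-refinements shows $\bcD_{/M}$ is the homotopy pushout of $\bcD_{/M_0}\leftarrow \bcD_{/M_0 \cap M_1}\rightarrow \bcD_{/M_1}$ in a controlled sense; combined with the Segal condition on $\cC$, this exhibits $\int_M \cC$ as the iterated limit of the closed-cover diagram. For excision, I would reduce the general collar-gluing $M \cong M_- \cup_{N\times \RR^1} M_+$ to the two building cases $\DD^1 \cong \DD^0 \cup_{\emptyset \times \RR^1} \DD^0$ (a finite limit identity that is the Segal condition applied to $\cC$) and a one-dimensional collar across a 0-stratum; these generate all collar-gluings by the excision-inductive structure of $\bcM$.

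The main obstacle will be the excision step: one must verify that the natural comparison map from the relative-tensor-product expression over $\int_{N\times \RR^1}\cC$ to $\int_M \cC$ is an equivalence, which is not formal because pushouts in $\bcD_{/-}$ do not compute the full left Kan extension directly. The argument I would follow is the one in~\cite{enriched1}: reduce to a cofinality statement on posets of disk-refinements compatible with the collar, using the contractibility results on classifying spaces of refinement categories (analogous to Corollary~\ref{t22} used repeatedly above), and then invoke siftedness to commute the relevant colimits past the product/limit expressions in $\cV$.
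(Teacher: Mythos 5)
The paper does not prove Theorem~\ref{t4}: it is stated with attribution to \cite{fact1} and \cite{enriched1} and used as a black box, with only the surrounding recollections (the pointwise description of $\rho_\ast$, Proposition~\ref{t48}, Lemma~\ref{t24}) reproduced in the appendix. So there is no internal proof to compare your sketch against; the relevant comparison is with the cited references, and your outline does follow their broad architecture: pointwise Kan-extension formulas, reduction of the colimit over $\bcD_{/M}$ to a sifted index (though the correct statement is that $\bcD(M)$ is sifted, Proposition~\ref{t48}, combined with finality of $\bcD(M)\hookrightarrow\bcD_{/M}$, Lemma~\ref{t24} --- not siftedness of $\bcD_{/M}$ itself, and ``contractible classifying space plus finite products'' is not a characterization of siftedness), and cofinality arguments on categories of disk-refinements for the geometric statements.

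Two points would be genuine gaps if your sketch were to stand as a proof. First, the product-preservation step invokes the claim that sifted colimits commute with finite products in any $\oo$-category admitting both; this is false in general (in $\Spaces^{\op}$ it amounts to totalizations commuting with finite coproducts of spaces). What is actually needed is that finite products in $\cV$ distribute over sifted colimits, which holds for the intended targets ($\Spaces$, $\Cat_n$, cartesian closed presentable $\cV$) and is part of the standing hypotheses in \cite{fact1} and \cite{enriched1}; the bare hypotheses ``finite limits and sifted colimits'' do not support your argument as written. Second, the closed-cover and excision clauses are the substantive content of the theorem, and your treatment of them is a gesture: a pushout decomposition of the indexing categories $\bcD_{/M}$ does not by itself convert the colimit formula into the asserted limit over a closed-cover diagram, and for excision you explicitly defer to the cofinality and localization analysis of \cite{enriched1}. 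That deferral is consistent with how the present paper treats the theorem (as an imported result), but it means your proposal is an outline of where the proof lives rather than a self-contained proof.
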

Via the standard formula for left Kan extensions, for $\cC\in \fCat_1[\cV]$ and $M\in \bcM$, the factorization homology can be identified as a colimit involving the right Kan extension $\rho_\ast \cC$:
\[
\displaystyle \int_M \cC
~\simeq~
\colim\Bigl(
\bcD_{/M}
\xra{~\rm forget~}
\bcD
\xra{~\rho_\ast \cC~}
\cV
\Bigr)
~.
\]
We now explain that, in the case that $\cV = \Cat_n$ for some $n>0$, 
the right Kan extension along $\rho$ is given in terms of the restricted Yoneda functor along $\rho$.
This restricted Yoneda functor factors through the fully-faithful embedding of the $(\oo,1)$-category of $(\infty,1)$-categories as complete Segal spaces, and we denote it as
\[
\fC
\colon 
\bcD^{\op}
\longrightarrow
\Cat_1
~\subset~
\PShv(\bDelta)
~,\qquad
D
\mapsto 
\Bigl(
[p]
\mapsto
\Hom_{\bcD}\bigl( D , \rho([p]) \bigr)
\Bigr)
~.
\]
This functor $\fC$ is fully-faithful, with image consisting of those $(\infty,1)$-categories that are \bit{free on a finite directed graph}.  
In light of this characterization of the image of $\fC$, for $D\in \bcD$ and for $\cC$ an $(\infty,1)$-category, a functor $\fC(D) \xra{\ell} \cC$ is precisely the data of
\begin{itemize}
\item
a map $\sk_0(D) \xra{\ell_0} \Obj(\cC)$ from the $0$-type of 0-dimensional strata of $D$ to the moduli space of objects of $\cC$,

\item
a map $D \smallsetminus \sk_0(D) \xra{\ell_1} \Mor(\cC)$ from the $0$-type of $1$-dimensional strata of $D$ to the moduli space of morphisms of $\cC$,

\item
a commutative diagram among spaces:
\[
\xymatrix{
D \smallsetminus \sk_0(D)
\ar[d]^-{(\ev_s , \ev_t)}
\ar[rr]^-{\ell_1}
&&
\Mor(\cC)
\ar[d]^-{(\ev_s,\ev_t)}
\\
\sk_0(D)
\times
\sk_0(D)
\ar[rr]^-{\ell_0\times \ell_0}
&&
\Obj(\cC)
\times
\Obj(\cC)
.
}
\]

\end{itemize}
Consequently, in the case that $\cV = \Cat_{n-1}$ for some $n> 0$, then, for $\cC\in \Cat_n \subset \fCat_1[\cV]$ an $(\infty,n)$-category, regarded as a category-object internal to $\cV$, the right Kan extension $\rho_\ast \cC$ evaluates as
\begin{equation}\label{f8}
\rho_\ast \cC
\colon
\bcD
\longrightarrow
\Cat_{n-1}
~,\qquad
D
\mapsto
\Hom_{\Cat_n}\bigl(
\fC(D) \wr \bullet
,
\cC
\bigr)
~,
\end{equation}
in where the values are written as presheaves on $\Cat_{n-1}$ which are representable, and for $T\in \Cat_{n-1}$ the $(\infty,n)$-category $\fC(D) \wr T$ that with maximal $(\infty,1)$-subcategory $\fC(D)$, for each pair $x,y\in \Obj\bigl( \fC(D)\bigr)$ of objects, the $(\infty,n-1)$-category of 1-morphisms from $x$ to $y$ is $\Mor\bigl( \fC(D) \bigr) \times T$.

\begin{remark}\label{r5}
The functor~(\ref{f5}) is the $\beta$-version of factorization homology, the second and enhanced version: it pairs compact solidly 1-framed stratified spaces $M$ and category-objects.
To distinguish, we denote the first version of factorization homology (see~\cite{oldfact}) with a superscript ``$\alpha$'': it pairs framed 1-manifolds $M$ and associative algebras.  
Proposition~\ref{t23} of the next subsection relates these two versions.

\end{remark}

Consider the full $\infty$-subcategory 
\begin{equation}\label{f10}
\bcD(M)
~\subset~
\bcD_{/M}
~\subset~
\bcM_{/M}
\end{equation}
consisting of those morphisms $D\xra{r} M$ in which $r$ is a refinement and $D\in \bcD$.
\begin{prop}[\cite{circle}]\label{t48}
Let $M\in \bcM$ be a compact solidly 1-framed stratified space.
The $(\oo,1)$-category $\bcD(M)$ is sifted, and in particular its classifying space $\sB\bcD(M)\simeq \ast$ is contractible.
The $(\oo,1)$-category $\bcD(M)$ is an ordinary category: the space of morphisms in $\bcD(M)$ between any two objects is a 0-type. 

\end{prop}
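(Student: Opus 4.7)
The plan is to pin down a concrete combinatorial description of $\bcD(M)$ and read both claims off of it. First, I would argue that an object of $\bcD(M)$ is (up to canonical equivalence) specified by a finite subset $S \subset M$ containing $\sk_0(M)$ for which the induced stratification $M_S$ is disk-stratified; this uses the fact that a solid 1-framing on a 1-dimensional stratum rigidifies its trivialization, so that any two disk-refinements of $M$ with the same underlying $S$ are connected by a unique isomorphism over $M$. A morphism between two such objects is then specified by the combinatorial data of the corresponding map of 0-strata, compatibly with the ambient (cyclic or linear) ordering inherited from $M$.

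Granting this model, the claim that mapping spaces in $\bcD(M)$ are $0$-types reduces to the elementary observation that a solidly 1-framed closed 1-disk admits no non-identity framed self-equivalence fixing its boundary 0-strata. Consequently, once the combinatorial refinement map between 0-strata is fixed, the refinement $D \to D'$ in $\bcM_{/M}$ is determined up to a contractible space of choices, so the component of $\Map_{\bcD(M)}(D,D')$ containing it is contractible; there are only finitely many such components, hence the mapping space is discrete.

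For siftedness I would verify that $\bcD(M)$ is non-empty and that the diagonal functor $\bcD(M) \to \bcD(M) \times \bcD(M)$ is final. Non-emptiness is direct. For the diagonal, fix $(D_1, D_2)$ and consider the undercategory $(D_1, D_2)/\bcD(M)$ of common refinements. I would construct a canonical common refinement $D_{12}$ of $M$ with 0-skeleton $\sk_0(D_1) \cup \sk_0(D_2)$, noting that adjoining 0-strata preserves disk-stratifiedness in dimension 1. This $D_{12}$, together with its canonical refinement morphisms from $D_1$ and $D_2$, is an initial object of $(D_1, D_2)/\bcD(M)$, which forces its classifying space to be contractible; Quillen's Theorem~A then yields finality of the diagonal, and contractibility of $\sB\bcD(M)$ is a formal consequence (since sifted $\infty$-categories have contractible classifying spaces).

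The main obstacle will be carefully handling the solid 1-framing data so that the underlying combinatorial model is genuinely an ordinary 1-category rather than an $(\infty,1)$-category with hidden higher automorphisms. Specifically, one must check that the framing trivializes the space of framed self-equivalences of each 1-disk fixing its boundary 0-strata; this rigidity is what upgrades the straightforward combinatorics of $0$-strata inclusions to a discrete mapping space, and it is also what makes the construction of $D_{12}$ canonical rather than merely coherent.
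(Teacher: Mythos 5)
Your object-level description of $\bcD(M)$ and your rigidity argument for discreteness of mapping spaces are plausible in outline (note, though, that the paper does not prove this proposition itself: it cites \cite{enriched1}, and in the one case it actually uses, $M=\SS^1$, it deduces all three properties from the identification $\bcD(\SS^1)\simeq\para$ of Corollary~\ref{t21} together with Corollary~\ref{s2}, whose siftedness input is Proposition~4.2.8 of~\cite{lurie.rotation}). The genuine gap is in your siftedness argument. Finality of the diagonal $\bcD(M)\to\bcD(M)\times\bcD(M)$ requires, for each pair $(D_1,D_2)$, weak contractibility of the comma category whose objects are \emph{cospans} $D_1\to D\leftarrow D_2$ in $\bcD(M)$, and your proposed initial object does not exist there. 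First, the canonical morphisms attached to the common refinement $D_{12}$ with $\sk_0(D_{12})=\sk_0(D_1)\cup\sk_0(D_2)$ run the wrong way: refinement morphisms in $\bcD(M)$ go from finer to coarser (compare the refinement morphisms $D\to D_+$ with $\sk_0(D_+)\subset\sk_0(D)$ in the proof of Lemma~\ref{t44}), so they give $D_{12}\to D_i$ rather than $D_i\to D_{12}$; a morphism toward a finer object is of creation type and involves a genuine choice of where each created $0$-stratum sits, so it is not canonical.

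Second, and more seriously, you are implicitly modelling $\bcD(M)$ as the poset of finite subsets $S\supset\sk_0(M)$ under inclusion, for which common refinements would indeed give (co)filteredness; but $\bcD(M)$ has many more morphisms (refinements \emph{and} creations), and for $M=\SS^1$ it is equivalent to $\para$, which is neither a poset nor filtered. In $\para$ the relevant comma categories have no initial object: take $D_1=D_2$ to be the one-orbit object $\ZZ$, so a cospan is a pair of points $x=f(0)$, $y=g(0)$ in some $\mu\in\para$. If a candidate $(\mu_0,f_0,g_0)$ has $f_0(0)=g_0(0)$, it admits no map to any cospan with $f'(0)\neq g'(0)$; if instead $f_0(0)<g_0(0)$ (or $>$), order-preservation forbids any map to a cospan realizing the opposite strict order, e.g.\ the two-orbit object with the two marked points interleaved the other way. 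So weak contractibility of these comma categories is exactly the nontrivial siftedness of $\para$, which cannot be obtained by exhibiting initial objects; the available proofs go through the initial functor $\bDelta\to\para$ of Proposition~\ref{t74}, or, for general $M$, the arguments of \cite{enriched1}. Since you derive contractibility of $\sB\bcD(M)$ from siftedness, that claim inherits the same gap.
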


This $(\oo,1)$-category $\bcD(M)$ plays an important role in factorization homology, as this result indicates.
\begin{lemma}[\cite{circle}]\label{t24}
For each compact solidly 1-framed stratified spaces $M$, 
the fully-faithful functor $\bcD(M) \hookrightarrow \bcD_{/M}$ is final.
In particular, for each category-object $\cC$ internal to $\cV$, 
the canonical morphism in $\cV$,
\[
\colim\Bigl(
\bcD(M)
~\hookrightarrow~
\bcD_{/M}
\xra{~\rm forget~}
\bcD
\xra{~\rho_\ast \cC~}
\cV
\Bigr)
\xra{~\simeq~}
\displaystyle \int_M \cC
~,
\]
is an equivalence.

\end{lemma}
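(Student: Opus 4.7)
The plan is to prove finality via the $\infty$-categorical form of Quillen's Theorem~A, and then to derive the colimit identification as an immediate consequence of the pointwise formula for the left Kan extension $\delta_!$ appearing in the definition~(\ref{f5}) of $\int_M\cC$. Concretely, since $\delta$ is fully-faithful, the pointwise formula gives $(\delta_!\rho_\ast\cC)(M)\simeq\colim\bigl(\bcD_{/M}\to\bcD\xra{\rho_\ast\cC}\cV\bigr)$, and restriction along a final functor preserves colimits; so once finality of $\bcD(M)\hookrightarrow\bcD_{/M}$ is established, the stated colimit formula is automatic.

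To verify finality it suffices to check, for each $(D\xra{f}M)\in\bcD_{/M}$, that the comma $\infty$-category
\[
\cJ_f~:=~\bcD(M)\underset{\bcD_{/M}}\times(\bcD_{/M})^{(D\xra{f}M)/}
\]
has contractible classifying space. An object of $\cJ_f$ is a disk-refinement $(D'\xra{r}M)\in\bcD(M)$ together with a morphism $D\xra{g}D'$ in $\bcD$ satisfying $r\circ g\simeq f$, i.e., a factorization of $f$ through a refinement of $M$. I will exhibit an initial object of $\cJ_f$, which immediately forces $\sB\cJ_f\simeq\ast$.

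The construction of the initial object uses an orthogonal factorization system $(\cL,\cR)$ on $\bcM$ in which $\cR$ is the class of refinement morphisms; this is a structural feature of $\bcM$ coming from its construction in~\cite{enriched1}. Applying this factorization system to $f\colon D\to M$ produces a canonical
\[
D~\xra{~w_f\in\cL~}~D_f~\xra{~r_f\in\cR~}~M~.
\]
The main technical verification is that $D_f$ is itself disk-stratified, i.e., lies in $\bcD$. Since $D\in\bcD$ and the $\cL$-morphism $w_f$ only reassembles the stratification of $D$ along the data encoded by $f$, without introducing any non-Euclidean connected stratum (a point where the 1-dimensionality of objects in $\bcM$ is essential), the middle object inherits a disk-stratification from $D$. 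The triple $(D_f,r_f,w_f)$ then lies in $\cJ_f$, and orthogonality of the factorization system promotes the expression $f\simeq r\circ g$ for any other object $(D',r,g)\in\cJ_f$ into an essentially unique mediating morphism $D_f\to D'$ in $\bcD$ under $D$ and over $M$. This produces the required morphism in $\cJ_f$ from $(D_f,r_f,w_f)$ to any other object, exhibiting $(D_f,r_f,w_f)$ as initial.

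The main obstacle is the structural input: the existence of the refinement factorization system on $\bcM$, and the preservation of the disk-stratified property for the middle term whenever the source is disk-stratified. Both are handled in the development of compact solidly 1-framed stratified spaces in~\cite{enriched1}; once they are in place, initiality of $(D_f,r_f,w_f)\in\cJ_f$, contractibility of $\sB\cJ_f$, Quillen's Theorem~A, and the stated colimit identification follow in turn.
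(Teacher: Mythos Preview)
The paper does not supply its own proof of this lemma: it is stated with the citation~\cite{enriched1} and no proof environment follows. So there is nothing in the present paper to compare your argument against directly.

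Your outline is the expected one: reduce to Quillen's Theorem~A and produce an initial object in each comma category via a factorization of $f\colon D\to M$ through a refinement of $M$. The reduction of the colimit identification to finality via the pointwise left Kan extension formula is clean and correct.

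One point to be careful about is the orientation of the factorization system you invoke. You posit an orthogonal factorization system on $\bcM$ with refinements as the \emph{right} class~$\cR$. Elsewhere in this paper (see the proof of Lemma~\ref{t44}) the authors use a ``refinement followed by creation'' factorization system on $\bcD(\SS^1)$, in which refinements are the \emph{left} class. These are not in contradiction, since that factorization system lives on $\bcD(\SS^1)$ rather than on $\bcM$, and the morphism classes available in $\bcM$ are richer; but you should verify in~\cite{enriched1} that the system you need on $\bcM$ (with refinements on the right, complementary to some class of ``active'' or closed-creation morphisms) is actually the one constructed there, and that it has the stated orientation. The claim that the middle object $D_f$ remains disk-stratified is the second genuine input, and you are right to flag it; this is where the $1$-dimensionality is used, and it does require an argument beyond what you have sketched.

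In short: your strategy is sound and almost certainly matches the argument in~\cite{enriched1}, but both of the structural facts you defer to that reference are doing real work and are not formalities.
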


After the above synopsis of factorization homology, Lemma~\ref{t24} supports the following.
\begin{observation}\label{t57}
Consider the case $\cV = \Cat_{n-1}$.
Let $\cC \in \Cat_n \subset \fCat_1[\Cat_{n-1}]$ be an $(\infty,n)$-category, regarded as a category-object internal to $\Cat_{n-1}$.
Let $M\in \bcM$ be a compact solidly 1-framed stratified space.
The composite functor in Lemma~\ref{t24} is identical to the composite functor
\begin{equation}\label{e38}
\bcD(M)
~\hookrightarrow~
\bcD_{/M}
\xra{~\rm forget~}
\bcD
\xra{~\Hom_{\Cat_n}\bigl(
~
\fC(D) \wr \bullet
~,~
\cC
~
\bigr)
~}
\Cat_{n-1}
~.
\end{equation}
An object in $\int_M\cC$ is represented by a disk-refinement $D\to M$ and a functor $\fC(D) \xra{\ell} \cC$.
Such a functor $\ell$ consists of an object $\ell_0(x)\in \Obj(\cC)$ for each connected $0$-dimensional stratum $x\in \sk_0(D)$,  
a 1-morphism $\bigl(\ell_0(x)\xra{\ell_1(U)} \ell_0(y)\bigr) \in \Mor(\cC)$ for each connected 1-dimensional stratum $U\subset D\smallsetminus \sk_0(D)$ whose point-set boundary is $\{x,y\} \subset D$, and where the directionality of the morphism $c_U$ is determined by the 1-framing of $U$.

\end{observation}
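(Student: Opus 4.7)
The plan is to derive Observation~\ref{t57} as a direct substitution: plug the explicit formula~(\ref{f8}) for $\rho_\ast\cC$ into the colimit presentation supplied by Lemma~\ref{t24}, then unpack what an object in that colimit looks like using the characterization of $\fC(D)$ as free on a finite directed graph. First I would invoke Lemma~\ref{t24}, which presents $\int_M\cC$ as the colimit of the composite $\bcD(M)\hookrightarrow\bcD_{/M}\xra{\rm forget}\bcD\xra{\rho_\ast\cC}\Cat_{n-1}$. The crux of the first assertion is then to identify this $\rho_\ast\cC$, termwise and naturally in $D$, with the functor $D\mapsto\Hom_{\Cat_n}\!\bigl(\fC(D)\wr\bullet,\cC\bigr)$ of~(\ref{f8}). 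Since this identification is the content of~(\ref{f8}) itself (following from the pointwise formula for right Kan extensions combined with the fact that $\fC$ is the restricted Yoneda functor along $\rho$ landing in complete Segal spaces, and that $\cC\in\Cat_n$ is by definition a Segal presheaf $\bDelta^{\op}\to\Cat_{n-1}$), substitution yields that the composite of Lemma~\ref{t24} coincides with~(\ref{e38}).

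Next I would unpack objects. By the colimit formula, an object of $\int_M\cC$ is represented by a pair consisting of a vertex $(D\xra{\sf ref} M)\in\bcD(M)$ together with an object of $\Hom_{\Cat_n}\!\bigl(\fC(D)\wr\bullet,\cC\bigr)\in\Cat_{n-1}$, which on the underlying moduli space of objects is precisely a functor $\ell\colon\fC(D)\to\cC$ between $(\infty,n)$-categories. The task remaining is to translate such an $\ell$ into the graph-theoretic data promised by the observation.

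For this translation I would apply the key characterization stated just before~(\ref{f8}): the fully-faithful functor $\fC$ has image the $(\infty,1)$-categories that are free on a finite directed graph. Concretely, $\fC(D)$ is the free $(\infty,1)$-category on the directed graph whose vertex set is $\sk_0(D)$ and whose edge set is $\pi_0\bigl(D\smallsetminus\sk_0(D)\bigr)$, each edge oriented using the given solid $1$-framing on the corresponding $1$-stratum. By the universal property of a free $(\infty,1)$-category, a functor $\ell\colon\fC(D)\to\cC$ is thus uniquely determined by an object $\ell_0(x)\in\Obj(\cC)$ for each $x\in\sk_0(D)$ and a $1$-morphism $\ell_0(x)\xra{\ell_1(U)}\ell_0(y)$ for each connected $1$-stratum $U\subset D\smallsetminus\sk_0(D)$ with point-set boundary $\{x,y\}$, subject to the evident source/target compatibility square.

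I expect the only mild subtlety will be the orientation assertion in the previous paragraph: that the direction of each edge $U$ is the one induced by the $1$-framing of the corresponding stratum. This will reduce to tracking how $\rho$ carries $[p]\in\bDelta^{\op}$ to the standard disk-refinement of $\DD^1$ with its standard framing, so that the $\bcD$-morphisms $D\to\rho([p])$ encoded by $\fC(D)$ respect the $1$-framed orientation of each $1$-stratum. Once this is in place, all three bullet-points in the statement follow by direct reading, and no further machinery beyond Lemma~\ref{t24}, formula~(\ref{f8}), and the freeness characterization of $\fC(D)$ is required.
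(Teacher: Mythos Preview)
Your proposal is correct and matches the paper's intent exactly: the paper presents this as an observation without separate proof, since it follows by substituting the identification~(\ref{f8}) of $\rho_\ast\cC$ into the colimit of Lemma~\ref{t24} and then invoking the explicit description of functors out of $\fC(D)$ via its freeness on the underlying directed graph (stated in the bullet points just preceding~(\ref{f8})). Your unpacking of the orientation subtlety is appropriate and is implicit in the paper's description of $\fC$ as the restricted Yoneda functor along $\rho$.
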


\begin{prop}[1-dimensional nonabelian Poincar\'e duality]
\label{prop.mappingspace}
Let $Z\in \Spaces \subset \Cat_1$ be a space, regarded as an $(\infty,1)$-category.
There is a canonical equivalence between functors from $\bcM$ to $\Spaces$,
\[
\int_{M}Z 
~\simeq~
\Map(M,Z)
~.
\]

\end{prop}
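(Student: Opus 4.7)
The plan is to compute $\int_M Z$ explicitly using the colimit formula of Lemma~\ref{t24}, and then collapse the colimit using contractibility of $\bcD(M)$.

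First, I would identify the value of the right Kan extension $\rho_\ast Z$ on disk-stratified spaces. Since $Z\in\Spaces\subset \Cat_1$, viewed as a category-object internal to $\Spaces$ it is the constant simplicial space at $Z$. For $D\in\bcD$, the formula for $\rho_\ast Z$ (analogous to (\ref{f8}) in the case $n=1$) gives
\[
(\rho_\ast Z)(D) ~\simeq~ \Hom_{\Cat_1}\bigl(\fC(D), Z\bigr).
\]
Because $Z$ is an $\infty$-groupoid, every functor from an $(\infty,1)$-category into $Z$ factors uniquely through the classifying space, so $\Hom_{\Cat_1}(\fC(D),Z)\simeq \Map\bigl(\sB\fC(D), Z\bigr)$. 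Since $\fC(D)$ is free on the finite directed graph of strata of $D$, its classifying space is canonically homotopy-equivalent to the underlying space $|D|$ (the 1-dimensional strata are contractible arcs, so $|D|$ deformation-retracts onto the geometric realization of the graph). Thus $(\rho_\ast Z)(D) \simeq \Map(|D|, Z)$.

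Second, applying Lemma~\ref{t24}, I obtain
\[
\int_M Z ~\simeq~ \colim\Bigl(\bcD(M) \xra{D\mapsto \Map(|D|,Z)} \Spaces\Bigr).
\]
Every morphism in $\bcD(M)$ is a refinement over $M$, and refinements are identities on underlying spaces, so $|D| = |M|$ functorially in $D\in\bcD(M)$. Consequently, the indexed diagram is canonically equivalent to the constant diagram at $\Map(|M|, Z)$.

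Third, Proposition~\ref{t48} states that $\bcD(M)$ has contractible classifying space, so the colimit of any constant diagram indexed by $\bcD(M)$ is its value. Hence $\int_M Z \simeq \Map(|M|, Z) = \Map(M, Z)$. Naturality in $M\in\bcM$ is inherited step-by-step: morphisms $M\to M'$ induce functors on $\bcD(M)\to \bcD(M')_{/M}$ compatible with the underlying-space identifications, and each equivalence above is manifestly natural.

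The main obstacle is the identification $\sB\fC(D)\simeq |D|$ \emph{naturally} in $D\in\bcD(M)$, so that the pointwise homotopy equivalences of Step~1 assemble into an equivalence of functors on $\bcD(M)$. Handling this requires checking that for each refinement $D\to D'$ over $M$, the induced functor $\fC(D')\to \fC(D)$ (sending each 1-stratum of $D'$ to a composite of the 1-strata of $D$ it is subdivided into) realizes to the identity on $|M|$ up to canonical homotopy -- a routine verification using that subdivisions of arcs give homotopy equivalences of classifying spaces.
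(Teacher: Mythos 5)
Your proposal follows essentially the same route as the paper's proof: identify $\rho_\ast Z$ as $\Hom_{\Cat_1}(\fC(-),Z)\simeq \Map\bigl(\sB\fC(-),Z\bigr)$ using that $Z$ is an $\infty$-groupoid, identify $\sB\fC(D)$ with the underlying space of $D$, observe that the restriction of this functor to $\bcD(M)$ is constant at $\Map(M,Z)$, and then conclude via Lemma~\ref{t24} together with contractibility of $\sB\bcD(M)$ (Proposition~\ref{t48}). The one point where you and the paper diverge is precisely the step you flag as the main obstacle: the \emph{coherent} naturality of the identification $\sB\fC(D)\simeq D$ as $D$ varies. Your proposed fix---checking for each refinement $D\to D'$ over $M$ that the induced functor on free categories realizes to a map homotopic to the identity---only supplies compatibility one morphism at a time, which in the $\infty$-categorical setting does not by itself assemble into the required equivalence of functors $\bcD(M)^{\op}\to\Spaces$; also note that morphisms of $\bcD(M)$ are not all refinements (they are generated by refinement and creation moves), though this does not affect the shape of the argument. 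The paper resolves this by invoking the functor $\sB\Exit\colon\cBun\to\Spaces^{\op}$ of~\cite{striat} classifying fiberwise classifying spaces of proper constructible bundles: restricting it along $\bcD\to\cBun$ yields the natural identification $\sB\fC(D)\simeq\sB\exit(D)\simeq D$ in one stroke, and its further restriction to $\bcD(M)$ is the constant functor at $M$. So your outline is correct, but to be complete the ``routine verification'' should be replaced by (or upgraded to) an appeal to this functorial statement, which is where the actual content of that step lives.
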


\begin{proof}
By definition of $\bcD$, there is a forgetting solid 1-framing defines a functor $\bcD \xra{\rm forget} \cBun$ to the $(\oo,1)$-category introduced in~\cite{striat} that classifies proper constructible bundles.
In turn, there is a functor $\cBun \xra{\sB \Exit} \Spaces^{\op}$ which classifies the fact that, for $X\to K$ a proper constructible bundle between stratified spaces, the fiber-wise classifying space $\sB^{\sf fib} \exit(X) \to \exit(K)$ is a right fibration.
Observe the left commutative diagram among $(\oo,1)$-categories
\begin{equation}
\label{f6}
\xymatrix{
\bcD^{\op} 
\ar[rr]^-{\fC}
\ar[d]_-{\rm forget}
&&
\Cat_1
\ar[d]^-{\sB}
&
&
\bcD(M)^{\op} 
\ar[rr]^-{\fC}
\ar[d]_-{!}
&&
\Cat_1
\ar[d]^-{\sB}
\\
\cBun^{\op}
\ar[rr]^-{\sB \Exit}
&&
\Spaces
,
&
\text{ ; }
&
\ast
\ar[rr]^-{\lag M \rag}
&&
\Spaces
,
}
\end{equation}
which is the functorial identification of $\sB \fC(D) \simeq \sB \exit(D) \simeq D$ of the classifying space of $\fC(D)$ with the underlying space of $D$, for each $D\in \bcD$;
the right commutative diagram follows.

This left commutative diagram~(\ref{f6}) lends to an identification of the right Kan extension $\rho_\ast Z$:
\begin{equation}
\label{f7}
\rho_\ast Z
\underset{(\ref{f8})}{~\simeq~}
\Hom_{\Cat_1}\bigl( \fC(-) , Z \bigr)
~\simeq~
\Hom_{\Spaces}\bigl( \sB \fC(-) , Z \bigr)
\underset{\rm left~(\ref{f6})}{~\simeq~}
\Hom_{\Spaces}\bigl( - , Z \bigr)
~=~
\Map(-,Z)
~,
\end{equation}
in which the middle identification uses that the $(\infty,1)$-category $Z$ is an $\infty$-groupoid.  
Therefore, the right commutative diagram~(\ref{f6}) lends to an identification of the restriction of $\rho_\ast Z$,
\begin{equation}
\label{f11}
\bcD(M)
\xra{~\rm forget~}
\bcD
\xra{\rho_\ast Z}
\Spaces
~,\qquad
(D\xra{\sf ref} M)
\mapsto 
\Map(M,Z)
~,
\end{equation}
as the constant functor at the space $\Map(M,Z)$.
So the factorization homology $\int Z$, which is the left Kan extension $\delta_! \rho_\ast Z$, is identified as
\begin{eqnarray}
\nonumber
\displaystyle \int_M Z
&
\underset{\rm Lem~\ref{t24}}{~\simeq~}
&
\colim\Bigl(
\bcD(M)
\xra{~\rho_\ast Z~}
\Spaces
\Bigr)
\\
\nonumber
&
\underset{(\ref{f7})}{~\simeq~}
&
\colim\Bigl(
\bcD(M)
\xra{~\Map(-,Z)~}
\Spaces
\Bigr)
\\
\nonumber
&
\underset{(\ref{f11})}{~\simeq~}
&
\colim\Bigl(
\bcD(M)
\xra{!}
\ast
\xra{\lag \Map(M,Z) \rag}
\Spaces
\Bigr)
\\
\nonumber
&
~\simeq~
&
\sB \bcD(M) \times  \Map(M,Z)
\\
\nonumber
&
\underset{\rm Prop~\ref{t48}}{\xra{~\simeq~}}
&
\Map(M,Z)
~.
\\
\end{eqnarray}

\end{proof}

\subsection{Comparison between the $\alpha$ and $\beta$ versions of factorization homology}

On their overlapping domains of definition, the $\beta$-version of factorization homology agrees with the $\alpha$-version of factorization homology.  
This is articulated as Proposition~\ref{t23} below.

Recall the symmetric monoidal $(\oo,1)$-category $\Mfld_1^{\fr}$, which is the subject of~\cite{oldfact}.
An object in $\Mfld_1^{\fr}$ is a framed 1-manifolds (without boundary, and finitary).
The space of morphisms between two is the space of framed open embeddings. 
The symmetric monoidal structure is disjoint union.  
Assigning to each compact solidly 1-framed stratified space $M$ its 1-dimensional strata defines a symmetric monoidal functor to $\Mfld_1^{\fr}$ from the monoidal $\infty$-subcategory $\bcM^{1}\subset \bcM$ consisting of all objects yet only those morphisms generated by refinements and creation morphisms that are isomorphisms over 1-dimensional strata:
\begin{equation}\label{e50}
\bcM \supset \bcM^{1}
\longrightarrow
\Mfld_1^{\fr}
~,\qquad
M\mapsto 
M \smallsetminus \sk_0(M)
~.
\end{equation}
In particular, for each $M\in \bcM$, there is a functor~(\ref{e50}) restricts as a functor between 
$\infty$-overcategories:
\begin{equation}\label{f9}
\bcD(M)
\xra{~(\ref{e50})_|~}
\Disk^{\fr}_{1/M\smallsetminus \sk_0(M)}
~,
\end{equation}
which is equivariant with respect the action of $\Aut_{\bcM}(M) \ra \Aut_{\Mfld_1^{\fr}}(M\smallsetminus\sk_0(M))$.
\begin{observation}
\label{t20}
The functor~(\ref{f9}) is fully-faithful if $M$ is 1-diimensional. The essential image consists of those $U\hookrightarrow M\smallsetminus \sk_0(M)$ that are surjective on path-components.

\end{observation}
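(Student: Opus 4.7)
The plan is to identify both sides explicitly in terms of finite subsets of $M' := M \smallsetminus \sk_0(M)$. Since $M$ is 1-dimensional, every disk-refinement $D \xra{\sf ref} M$ is obtained by inserting finitely many new 0-strata into the 1-strata of $M$, so objects of $\bcD(M)$ are parametrized by finite subsets $S \subset M'$. The functor~(\ref{f9}) carries the $D$ corresponding to $S$ to the open embedding $M' \smallsetminus S \hookrightarrow M'$; in particular this image is always surjective on path-components of $M'$, which supplies the forward inclusion of the essential-image characterization.

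For fully-faithfulness, I would use Proposition~\ref{t48} to reduce the hom-space in $\bcD(M)$ to a set, and then identify it combinatorially. A morphism $D_1 \to D_2$ in $\bcD(M)$ is a refinement over $M$, which in the parametrization above exists and is unique exactly when $S_2 \subseteq S_1$. On the target side, a morphism in $\Disk^{\fr}_{1/M'}$ from $M' \smallsetminus S_1$ to $M' \smallsetminus S_2$ is a framed open embedding equipped with an isotopy over $M'$ to the canonical inclusion; the key step is to observe that, for cofinite open $U_1, U_2 \subseteq M'$, the homotopy fiber of $\Emb^{\fr}(U_1, U_2) \to \Emb^{\fr}(U_1, M')$ over the inclusion is contractible when $S_2 \subseteq S_1$ and empty otherwise. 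This reduces to the framed-isotopy classification of open embeddings in dimension one, which is the main technical step; it will follow by unwinding the framing datum and inducting on path-components.

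For the remaining essential-image claim, let $U \hookrightarrow M'$ be an object of $\Disk^{\fr}_{1/M'}$ that is surjective on path-components. I would construct an isotopy of framed open embeddings into $M'$ from $U \hookrightarrow M'$ to an embedding of the form $M' \smallsetminus S \hookrightarrow M'$ with $S$ finite, by stretching each component of $U$ outward within its ambient component of $M'$ until every point not in the image becomes isolated. Surjectivity on path-components ensures no component of $M'$ is missed, and finitariness of $U$ forces $S$ to be finite. The resulting object lies in the image of~(\ref{f9}), exhibiting $U$ as equivalent to one in that image.
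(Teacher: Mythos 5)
The essential-image half of your proposal is fine in outline: an object of $\bcD(M)$ is obtained from $M$ by adjoining a finite subset $S\subset M':=M\smallsetminus\sk_0(M)$ of new $0$-strata (with $S$ meeting every circle component of $M'$, a constraint you omit, so that the result is disk-stratified), the functor~(\ref{f9}) carries it to $M'\smallsetminus S\hookrightarrow M'$, which is surjective on path-components, and conversely any $U\hookrightarrow M'$ surjective on path-components is equivalent over $M'$ to such a complement: choose one point of $S$ in each bounded complementary gap, and observe that the inclusion $U\subseteq M'\smallsetminus S$ is a bijection on path-components, hence an equivalence in $\Disk^{\fr}_{1/M'}$. (Phrasing this as an ``isotopy from $U\hookrightarrow M'$ to $M'\smallsetminus S\hookrightarrow M'$'' is loose, since the two embeddings have different domains, but the idea is workable.)

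The fully-faithfulness half, however, rests on two false identifications, one on each side. First, morphisms in $\bcD(M)$ are not only refinements over $M$: $\bcD(M)$ is a \emph{full} subcategory of $\bcM_{/M}$, so its morphisms also include creation morphisms and, over circle components, integral ``winding'' endomorphisms. Indeed Corollary~\ref{t21} -- which is deduced from this very Observation -- identifies $\bcD(\SS^1)\simeq\para$, a category in which every hom-set is nonempty and every object has automorphism group $\ZZ$; your description would instead make $\bcD(\SS^1)$ the poset of nonempty finite subsets of $\SS^1$ under reverse inclusion. Second, your ``key step'' about embedding spaces is false: the homotopy fiber of $\Emb^{\fr}(U_1,U_2)\to\Emb^{\fr}(U_1,M')$ over the inclusion is neither contractible when $S_2\subseteq S_1$ nor empty otherwise. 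For $M'=\SS^1$ and $U_1=U_2=\SS^1\smallsetminus\{p\}$ it is $\Omega\,\Emb^{\fr}(\RR^1,\SS^1)\simeq\Omega\SS^1\simeq\ZZ$, not a point; and for $U_1=\SS^1\smallsetminus\{p\}$, $U_2=\SS^1\smallsetminus\{q\}$ with $q\neq p$ it is nonempty (shrink $U_1$ slightly within itself to avoid $q$) -- the analogous failure already occurs for interval components of $M'$. What fully-faithfulness actually requires is an identification of $\pi_0$ of these (discrete but typically infinite) fibers with the monotone/paracyclic mapping sets of Observation~\ref{t31}, matched against the explicit hom-sets of $\bcD(M)$ coming from~\cite{enriched1} and~\cite{striat}. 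As written, your argument would prove that $\bcD(\SS^1)$ and the non-initial part of $\Disk^{\fr}_{1/\SS^1}$ are posets, contradicting Observation~\ref{t31}(2) and Corollary~\ref{t21}, which this Observation is meant to feed.
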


\begin{cor}\label{t21}
In the case that $M= \SS^1$, the functor~(\ref{f9}) is fully-faithful and witnesses freely adjoining an initial object:
\[
\Bigl(
\bcD(\SS^1)
\hookrightarrow
\bcD(\SS^1)^{\tl}
\Bigr)
~\underset{(\ref{f9})}{\simeq}~
\Bigl(
\bcD(\SS^1)
\hookrightarrow
\Disk^{\fr}_{1/\SS^1}
\Bigr)
~.
\]
In particular, Lemma~\ref{t31} lends to a canonical $\TT\simeq \sB\ZZ$-equivariant equivalence:
\[
\bcD(\SS^1)
\xra{~\simeq~}
\para
~,\qquad
(D\xra{\sf ref} \SS^1)
\mapsto 
{\sf exp}^{-1}\bigl( D\smallsetminus \sk_0(D) \bigr)
~.
\]

\end{cor}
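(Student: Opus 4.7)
The plan is to derive both claims as immediate consequences of Observation~\ref{t20} together with the characterization of an $(\infty,1)$-category of the form $\cC^{\tl}$. Since $\SS^1$ is a 1-dimensional compact solidly 1-framed stratified space with $\sk_0(\SS^1) = \emptyset$, Observation~\ref{t20} applies to give fully-faithfulness of the functor $\bcD(\SS^1) \xra{(\ref{f9})} \Disk^{\fr}_{1/\SS^1}$, with essential image consisting of those $U \hookrightarrow \SS^1$ for which $\pi_0(U) \to \pi_0(\SS^1) = \ast$ is surjective, i.e., those with $U$ nonempty. The only object of $\Disk^{\fr}_{1/\SS^1}$ outside the essential image is thus the empty embedding $\emptyset \hookrightarrow \SS^1$.

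Next I will verify that this missing object is initial in $\Disk^{\fr}_{1/\SS^1}$ and that it admits no nontrivial incoming morphisms. Both are forced by elementary observations: the space of framed embeddings from $\emptyset$ into any framed 1-manifold is a point (the empty embedding), and automatically commutes with any reference map to $\SS^1$; conversely, the space of framed embeddings from a nonempty framed 1-manifold into $\emptyset$ is empty. These two facts together show that the canonical functor $\bcD(\SS^1)^{\tl} \to \Disk^{\fr}_{1/\SS^1}$ -- induced by the fully-faithful inclusion~(\ref{f9}) and the initial object of its codomain -- is essentially surjective and an equivalence on all mapping spaces, hence an equivalence of $(\infty,1)$-categories that restricts to~(\ref{f9}) away from the cone point.

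Finally, Observation~\ref{t31}(2) supplies the canonical $\TT \simeq \sB\ZZ$-equivariant equivalence $\Disk^{\fr}_{1/\SS^1} \xra{\simeq} \para^{\tl}$, which necessarily sends the (unique, hence $\TT$-fixed) initial object $\emptyset \hookrightarrow \SS^1$ to the cone-point of $\para^{\tl}$. Restricting along the complementary full $\oo$-subcategory inclusions then yields the asserted $\TT$-equivariant equivalence
\[
\bcD(\SS^1) \xra{~\simeq~} \para
~,\qquad
(D \xra{\sf ref} \SS^1) \mapsto {\sf exp}^{-1}\bigl( D \smallsetminus \sk_0(D) \bigr)
~,
\]
where $\TT$-equivariance inherits from that of both~(\ref{f9}) and Observation~\ref{t31}(2). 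The only subtle point -- a bookkeeping matter rather than a genuine obstacle -- is aligning the essential-image characterization of Observation~\ref{t20} with the categorical meaning of ``freely adjoining an initial object'', and this is resolved by observing that no nonempty framed 1-manifold embeds into $\emptyset$.
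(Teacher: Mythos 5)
Your proposal is correct and follows exactly the route the paper intends: the corollary is stated as an immediate consequence of Observation~\ref{t20} (fully-faithfulness with essential image the embeddings surjective on path-components, i.e.\ the nonempty ones) and Observation~\ref{t31}(2), and you have simply made explicit the standard check that the one missing object $\emptyset\hookrightarrow\SS^1$ is initial and admits no incoming morphisms, so the inclusion exhibits $\Disk^{\fr}_{1/\SS^1}$ as $\bcD(\SS^1)^{\tl}$, with the cone point matching that of $\para^{\tl}$ $\TT$-equivariantly. No gaps.
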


After Corollary~\ref{s2}, Corollary~\ref{t21} immediately gives the following, which is also stated as Proposition~\ref{t48} above.  
\begin{cor}\label{t22}
The $(\oo,1)$-category $\bcD(\SS^1)$ has the following properties.
It is sifted; its classifying space is contractible; it is an ordinary category.

\end{cor}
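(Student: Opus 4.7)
The proof will be almost entirely a transfer argument using the equivalence $\bcD(\SS^1)\simeq \para$ supplied by Corollary~\ref{t21}. Once we have that identification, every one of the three claims reduces to an already-established property of the paracyclic category, so the work is just to assemble the references in the right order.

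First I would invoke Corollary~\ref{t21}, which gives the $\TT$-equivariant equivalence between $\infty$-categories
\[
\bcD(\SS^1)\xra{~\simeq~}\para
~,\qquad
(D\xra{\sf ref}\SS^1)\mapsto {\sf exp}^{-1}(D\smallsetminus \sk_0(D))
~.
\]
Siftedness and contractibility of the classifying space are invariants of the equivalence class of an $\infty$-category, so both properties transfer from $\para$ to $\bcD(\SS^1)$. For siftedness, I cite the first sentence of Corollary~\ref{s2}; for contractibility of the classifying space, the second sentence of the same corollary (which in fact is a consequence of siftedness since a sifted $\infty$-category has contractible classifying space).

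For the last claim, that $\bcD(\SS^1)$ is an ordinary category, I again transfer across the equivalence of Corollary~\ref{t21} and observe that $\para$ is, by construction, an ordinary $1$-category: its objects are non-empty linearly ordered $\ZZ$-sets with finitely many orbits, and its morphisms are $\ZZ$-equivariant order-preserving maps — in particular, for any two objects $\lambda,\lambda'\in \para$, the set $\Hom_{\para}(\lambda,\lambda')$ is a discrete set and hence a $0$-type. Therefore $\bcD(\SS^1)$ has $0$-truncated morphism spaces, which is the definition of being an ordinary category.

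There is no genuine obstacle here: the entire content of Corollary~\ref{t22} is contained in Corollary~\ref{t21} and Corollary~\ref{s2}, and the remark preceding the statement already flags this. The only thing to be careful about is to note explicitly that ``ordinary category'' transfers across an equivalence of $\infty$-categories (trivially, since $n$-truncatedness of mapping spaces is equivalence-invariant), which is why it suffices to check it for $\para$.
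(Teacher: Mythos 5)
Your argument is correct and is essentially the paper's own: the paper derives Corollary~\ref{t22} immediately from the equivalence $\bcD(\SS^1)\simeq\para$ of Corollary~\ref{t21} together with Corollary~\ref{s2} (and notes the statement is also an instance of Proposition~\ref{t48}). Your explicit remark that being an ordinary category transfers across the equivalence because $\para$ has discrete hom-sets is exactly the intended justification, so there is nothing to add.
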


\begin{remark}
Anna Cepek's PhD thesis~\cite{cepek.Ran.circle} proves a $\TT\simeq \sB\ZZ$-equivariant identification $\exit\bigl( {\sf Ran}^{\sf u}(\SS^1) \bigr) \simeq \para^{\op}$ of the exit-path $(\oo,1)$-category of a unital version of the Ran space of $\SS^1$.
The identification of Corollary~\ref{t21} is related to this identification of Cepek's, and imports the essential idea demonstrated there of considering $\exp^{-1}(-)$.  

\end{remark}

In the next result, we consider the Cartesian symmetric monoidal structure on the $(\oo,1)$-category $\cV$, and we reference the functor
\[
\fB
\colon
\Alg(\cV)
~\hookrightarrow~
\fCat_1[\cV]
~\subset~
\Fun(\bDelta^{\op} , \cV)
~,\qquad
A\mapsto \bBar_\bullet(A)
~.
\]

\begin{prop}\label{t23}
There is a canonical identification between functors from $\Alg(\cV)$ to $\Fun(\bcM^{1} , \cV)$:
\[
\int^\alpha_{-\smallsetminus \sk_0(-)}
(-)
~\simeq~
\int_- \fB (-)
~.
\]
In particular, for each associative algebra $A \in \Alg(\cV)$, there is a canonical $\TT$-equivariant identification
\[
\int^\alpha_{\SS^1} A
~\simeq~
\int_{\SS^1} \fB A
~.
\]

\end{prop}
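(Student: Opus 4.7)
The plan is to establish the natural equivalence in the first displayed equation; the $\TT$-equivariant identification for $\SS^1 \in \bcM^1$ then follows automatically because every step of the construction is natural in $\bcM^1$-automorphisms of $\SS^1$, and the $\TT$-action on $\SS^1$ is by such automorphisms.

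First, I would present both sides as colimits over closely related indexing categories. By Lemma~\ref{t24},
\[
\int_M \fB A \;\simeq\; \colim_{\bcD(M)} \rho_\ast \fB A.
\]
Because $\fB A$ is the Segal object with $[p]$-component $A^{\otimes p}$, the right Kan extension $\rho_\ast \fB A$ evaluated on a disk-stratified refinement $(D \xra{\sf ref} M)$ is identified, via the Segal decomposition of $D$ into its 1-strata, as $\rho_\ast \fB A (D) \simeq \bigotimes_{\pi_0(D \smallsetminus \sk_0 D)} A$. On the $\alpha$-side, the standard disk-algebra formula from~\cite{oldfact} gives
\[
\int^\alpha_{M \smallsetminus \sk_0 M} A \;\simeq\; \colim_{\Disk^\fr_{1/M \smallsetminus \sk_0 M}} \Bigl( (U \hookrightarrow M \smallsetminus \sk_0 M) \longmapsto \bigotimes_{\pi_0 U} A \Bigr).
\]
The comparison functor $(\ref{f9}) \colon \bcD(M) \to \Disk^\fr_{1/M \smallsetminus \sk_0 M}$ sends $(D \xra{\sf ref} M)$ to $(D \smallsetminus \sk_0 D \hookrightarrow M \smallsetminus \sk_0 M)$, and under $(\ref{f9})$ the $\beta$-diagram is the pullback of the $\alpha$-diagram. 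Hence the proposition reduces to finality of $(\ref{f9})$.

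Establishing finality of $(\ref{f9})$ is the main obstacle. By Observation~\ref{t20}, the essential image of $(\ref{f9})$ consists of the $\pi_0$-surjective open embeddings $U \hookrightarrow M \smallsetminus \sk_0 M$. For an object $(U \hookrightarrow M \smallsetminus \sk_0 M)$ in the target, I would analyze the corresponding under-category: any such $U$ can be enlarged to a $\pi_0$-surjective open $U'$ by adjoining, near each 0-stratum of $M$ lying in a missing path-component of $M \smallsetminus \sk_0 M$, a small solidly 1-framed open interval, and this $U'$ is refined by an object of $\bcD(M)$. A cofiltered-intersection argument on such enlargements, together with sifted-ness of $\bcD(M)$ from Proposition~\ref{t48}, yields contractibility of the under-category, hence finality.

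For $M = \SS^1$ the finality is immediate from Corollary~\ref{t21}, which identifies $(\ref{f9})$ with the inclusion $\bcD(\SS^1) \hookrightarrow \bcD(\SS^1)^\tl$: the adjoined initial object is $(\emptyset \hookrightarrow \SS^1)$ with value $\uno$, and this inclusion is final precisely because $\bcD(\SS^1)$ has contractible classifying space (Corollary~\ref{t22}). The $\TT$-action on $\SS^1$ is through $\bcM^1$-automorphisms that preserve both sides of the diagram, yielding the claimed $\TT$-equivariant equivalence $\int^\alpha_{\SS^1} A \simeq \int_{\SS^1} \fB A$.
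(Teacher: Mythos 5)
Your reduction is the same as the paper's: present $\int_M \fB A$ via Lemma~\ref{t24} as a colimit over $\bcD(M)$, present $\int^\alpha_{M\smallsetminus\sk_0(M)}A$ as a colimit over $\Disk^{\fr}_{1/M\smallsetminus\sk_0(M)}$, observe that the former diagram is the restriction of the latter along the comparison functor~(\ref{f9}), and thereby reduce everything to finality of~(\ref{f9}); your treatment of naturality and of $\TT$-equivariance is also fine, and for $M=\SS^1$ alone your argument via Corollary~\ref{t21} and contractibility of $\sB\bcD(\SS^1)$ (Corollary~\ref{t22}) is correct and complete.

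The gap is in the general-$M$ case, which is exactly the crux. Finality of~(\ref{f9}) means that for each object $(U\hookrightarrow M\smallsetminus\sk_0(M))$ the comma $\infty$-category of objects of $\bcD(M)$ under $U$ is weakly contractible; its objects are disk-refinements $D\xra{\sf ref}M$ \emph{together with} a morphism from $U$ to $D\smallsetminus\sk_0(D)$ in $\Disk^{\fr}_{1/M\smallsetminus\sk_0(M)}$. Your sketch -- enlarge $U$ to a $\pi_0$-surjective $U'$ and invoke ``a cofiltered-intersection argument'' plus siftedness of $\bcD(M)$ -- only produces objects of this under-category (non-emptiness), and siftedness of $\bcD(M)$ (Proposition~\ref{t48}) concerns $\bcD(M)$ itself, giving $\sB\bcD(M)\simeq\ast$; it says nothing about these under-categories, which is what Quillen's Theorem~A requires. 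The enlargements of $U$ inside $M\smallsetminus\sk_0(M)$ also do not obviously form a cofiltered system computing the under-category, since the latter is indexed by refinements of $M$ with embedding data, not by open subsets alone. The paper closes exactly this hole by a different mechanism: it cites Lemma~2.3.2 of~\cite{pkd}, which asserts finality of the monomorphism $\Disk^{\fr,\sf surj}_{1/M\smallsetminus\sk_0(M)}\hookrightarrow\Disk^{\fr}_{1/M\smallsetminus\sk_0(M)}$ (surjective-on-components objects \emph{and} surjective-on-components morphisms), and then uses Quillen's Theorem~A to transfer finality to any full $\infty$-subcategory containing that image, in particular to $\bcD(M)$ via Observation~\ref{t20}. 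Your argument would need either this citation or a genuine proof of contractibility of the under-categories; as written, the key step is asserted rather than proved.
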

\begin{proof}
Let $A\in \Alg(\cV)$, and let $M\in \bcM^1$.  
By definition, the two forms of factorization homology evaluate as colimits
{\Small
\[
\int^\alpha_{M\smallsetminus \sk_0(M)} A 
\simeq 
\colim\Bigl(\Disk^{\fr}_{1/M\smallsetminus \sk_0(M)}
\xra{\rm forget}
\Disk_1^{\fr}
\xra{A}\cV\Bigr)
\qquad
\text{ and }
\qquad
\int_{M} \fB A 
\simeq 
\colim\Bigl(\bcD_{/M} \xra{\rm forget} \bcD \xra{\rho_\ast \fB A}\cV\Bigr)
~.
\]
}
Observe the canonically commutative diagram among $(\oo,1)$-categories:
\[
\xymatrix{
\bcD(M)
\ar[rr]
\ar[d]_-{(\ref{f9})}
&&
\bcD_{/M}
\ar[rr]^-{\rm forget}
&&
\bcD 
\ar[d]^-{\rho_\ast \fB A}
\\
\Disk^{\fr}_{1/M\smallsetminus \sk_0(M)}
\ar[rr]^-{\rm forget}
&&
\Disk_1^{\fr}
\ar[rr]^-{A}
&&
\cV
.
}
\]
Lemma~\ref{t24} states that the top left horizontal functor is final.
This finality supplies a canonical morphism in $\cV$:
\[
\int_M \fB A
\longrightarrow
\int^\alpha_{M\smallsetminus\sk_0(M)} A
~.
\]
The functoriality of this morphism in both arguments $A\in \Alg(\cV)$ and $M\in \bcM^1$ is evident.
The result is established upon showing this morphism is an equivalence.
For this, it is enough to show that the functor~(\ref{f9}) is final.

Observation~\ref{t20} identifies~(\ref{f9}) as the inclusion of the full $\infty$-subcategory consisting of those $U\hookrightarrow M \smallsetminus \sk_0(M)$ that are surjective on path-components.
Consider the $\infty$-subcategory $\Disk^{\fr,\sf surj}_{1/M\smallsetminus \sk_0(M)}  \ra  \Disk^{\fr}_{1/M\smallsetminus \sk_0(M)}$ consisting of those $U\hookrightarrow M \smallsetminus \sk_0(M)$ that are surjective on path-components, and those morphisms $U \hookrightarrow V \hookrightarrow M \smallsetminus \sk_0(M)$ in which $U\hookrightarrow V$ is also surjective on path-components.
Note that, via Observation~\ref{t20}, $\bcD(M)$ is the smallest full $\infty$-subcategory of $\Disk^{\fr}_{1/M\smallsetminus \sk_0(M)}$ containing the image of $\Disk^{\fr, \sf surj}_{1/M\smallsetminus \sk_0(M)}$.
By Lemma~2.3.2 of~\cite{pkd}, the monomorphism 
$\Disk^{\fr, \sf surj}_{1/M\smallsetminus \sk_0(M)}
\hookrightarrow
\Disk^{\fr}_{1/M\smallsetminus \sk_0(M)}
$
is final. 
By Quillen's Theorem~A, this implies that if $\cC\ra \Disk^{\fr}_{1/M\smallsetminus \sk_0(M)}$ is a full $\oo$-subcategory which contains the image of the functor $\Disk^{\fr,\sf surj}_{1/M\smallsetminus \sk_0(M)}\ra  \Disk^{\fr}_{1/M\smallsetminus \sk_0(M)}$, then the functor $\cC\ra \Disk^{\fr}_{1/M\smallsetminus \sk_0(M)}$ is again final. 
Setting $\cC = \bcD(M)$ implies the result.
\end{proof}


\begin{thebibliography}{99}

\bibitem[AF1]{oldfact} Ayala, David; Francis, John. Factorization homology of topological manifolds.  J. Topol. 8 (2015), no. 4, 1045--1084.  

\bibitem[AF2]{fibrations} Ayala, David; Francis, John. Fibrations of $(\oo,1)$-categories. High. Struct. 4 (2020), no. 1, 168–265.

\bibitem[AF3]{flagged} Ayala, David; Francis, John. Flagged higher categories. Topology and quantum theory in interaction, 137–173, Contemp. Math., 718, Amer. Math. Soc., Providence, RI, 2018.

\bibitem[AF4]{pkd} Ayala, David; Francis, John. Poincar\'e/Koszul duality. Comm. Math. Phys. 365 (2019), no. 3, 847--933.

\bibitem[AF5]{zp} Ayala, David; Francis, John.
Zero-pointed manifolds.
J. Inst. Math. Jussieu   20 (2021), no. 3, 785–858.

\bibitem[AF6]{tang1} Ayala, David; Francis, John. The Tangle Hypothesis: Dimension 1. ArXiv preprint: 2410.23965.

\bibitem[AFMR]{enriched1} Ayala, David; Francis, John; Mazel-Gee, Aaron; Rozenblyum, Nick.  Factorization homology of enriched $(\oo,1)$-categories. Preprint.

\bibitem[AMR]{circle} Ayala, David; Mazel-Gee, Aaron; Rozenblyum, Nick.  Symmetries of the cyclic nerve. ArXiv preprint: 2405.03897.

\bibitem[AFR1]{fact1} Ayala, David; Francis, John; Rozenblyum, Nick. Factorization homology I: Higher categories. Adv. Math. 333 (2018), 1042--1177.

\bibitem[AFR2]{striat} Ayala, David; Francis, John; Rozenblyum, Nick. A stratified homotopy hypothesis. J. Eur. Math. Soc. (JEMS) 21 (2019), no. 4, 1071–1178.

\bibitem[AFR3]{corrigendum} Ayala, David; Francis, John; Rozenblyum, Nick. Corrigendum to ``Factorization homology I: Higher categories'' [Adv. Math. 333 (2018) 1042–1177]. Adv. Math. 370 (2020), 107217, 7 pp.



\bibitem[BaDo]{baezdolan} Baez, John; Dolan, James.
Higher-dimensional algebra and topological quantum field theory.
J. Math. Phys. 36 (1995), no. 11, 6073--6105. 

\bibitem[BeDr]{bd} Beilinson, Alexander; Drinfeld, Vladimir. Chiral algebras. American Mathematical Society Colloquium Publications, 51. American Mathematical Society, Providence, RI, 2004.

\bibitem[Ce]{cepek.Ran.circle} Cepek; Anna. On configuration categories. Thesis (Ph.D.)--Montana State University. 2019.

\bibitem[Co]{connes} Connes, Alain. Cohomologie cyclique et foncteurs ${\rm Ext}^n$. C. R. Acad. Sci. Paris S\'er. I Math. 296 (1983), no. 23, 953–958.


\bibitem[GJ]{getzler.jones} Getzler, Ezra; Jones, John.
The cyclic homology of crossed product algebras.
J. Reine Angew. Math. 445 (1993), 161–174.

\bibitem[Lu1]{HTT} Lurie, Jacob. Higher topos theory. Annals of Mathematics Studies, 170. Princeton University Press, Princeton, NJ, 2009. xviii+925 pp.

\bibitem[Lu2]{HA} Lurie, Jacob. Higher algebra. Preprint, 2016.

\bibitem[Lu3]{cobordism} Lurie, Jacob. On the classification of topological field theories. Current developments in mathematics, 2008, 129--280, Int. Press, Somerville, MA, 2009.

\bibitem[Lu4]{lurie.rotation} Lurie, Jacob. Rotation invariance in algebraic $K$-theory. Preprint, 2015.

\bibitem[RV]{riehl-verity} Riehl, Emily; Verity, Dominic Homotopy coherent adjunctions and the formal theory of monads. Adv. Math. 286 (2016), 802–888.

\bibitem[Re1]{rezk}  Rezk, Charles. A model for the homotopy theory of homotopy theory. Trans. Amer. Math. Soc. 353 (2001), no. 3, 973--1007.

\bibitem[SS]{schanuel.street} Schanuel, Stephen; Street, Ross. The free adjunction. Cahiers de Topologie et G\'eom\'etrie Diff\'erentielle Cat\'egoriques (1986), no. 27, 81–83.

\bibitem[St]{steinebrunner} Steinebrunner, Jan. The space of traces in symmetric monoidal infinity categories. Preprint, 2018. https://arxiv.org/abs/1811.11654

\bibitem[TV1]{toen.vezzosi} To\"en, Bertrand; Vezzosi, Gabriele. Chern character, loop spaces and derived algebraic geometry. Algebraic topology, 331--354, Abel Symp., 4, Springer, Berlin, 2009.

\bibitem[TV2]{toen.vezzosi2} To\"en, Bertrand; Vezzosi, Gabriele. Caract\`eres de Chern, traces \'equivariantes et g\'eom\'etrie alg\'ebrique d\'eriv\'ee. (French) [Chern characters, equivariant traces and derived algebraic geometry] Selecta Math. (N.S.) 21 (2015), no. 2, 449–554.


\end{thebibliography}
\end{document}